\def\algbackskip{\hskip-\ALG@thistlm}
\definecolor{mygreen}{RGB}{28,172,0} 
\definecolor{mylilas}{RGB}{170,55,241}
\newtheorem{lemma}{Lemma}
\newtheorem{definition}{Definition}
\newtheorem{proposition}{Proposition}
\newtheorem{theorem}{Theorem}
\newtheorem{problem}{Problem}
\newtheorem{remark}{Remark}
\newtheorem{example}{Example}[section]
\newtheorem{alg}{Algorithm}
\newcommand{\ST}[1]{\textcolor{red}{{#1}}}
\begin{document}
\newcommand{\commentout}[1]{}

\title{Estimate the spectrum of affine dynamical systems from partial observations of a single trajectory data }
\author{Jiahui Cheng\thanks{Department of Mathematics, Georgia Institute of Technology. Email: jcheng328@gatech.edu}
\and
Sui Tang \thanks{Department of Mathematics,  University of California Santa Barbara. Email: suitang@ucsb.edu}
}

\maketitle

\begin{abstract}

In this paper, we study the nonlinear inverse problem of estimating the spectrum of a system matrix, that drives a finite-dimensional affine dynamical system, from partial observations of a single trajectory data.  In the noiseless case, we prove  an annihilating polynomial of the system matrix, whose roots are a subset of the spectrum, can be uniquely determined from data.  We then study which eigenvalues of the system matrix can be recovered and derive various sufficient and necessary conditions to characterize the relationship between the recoverability of each eigenvalue and the observation locations. We propose various reconstruction algorithms \footnote{ the code is available in \href{https://github.com/lascride/DynamicalSampling_SpectrumRecoveryforJordanCase}{Jiahui Cheng's Github}} with theoretical guarantees, generalizing the classical Prony method, ESPIRIT, and matrix pencil method. We test the algorithms over a variety of examples with applications to graph signal processing, disease modeling and a real-human motion dataset.  The numerical results validate our theoretical results and demonstrate the effectiveness of the proposed algorithms, even when the data did not follow an exact linear dynamical system.

\end{abstract}

\section{Introduction}

Many physical processes in science and engineering are modeled as linear dynamical systems with a state-space formulation. For example, the linear time-invariant systems are widely used to characterize electrical systems and their properties \cite{hespanha2018linear}. Another common example is provided by the diffusion processes over the graphs, which have found wide applications including modeling rumor propagation in social networks \cite{zhang2014diffusion}, traffic movement in transportation network \cite{deri2016new}, spatial temperature profiles over sensor networks \cite{thanou2017learning}, and neural activities at different regions of the brain \cite{sporns2010networks}. In these applications, the states of the dynamical system at different time instances refer to \textit{signals} of interest.  In practice, a network of sensors is often placed to measure the values of evolving signals with varying locations, and the collected data are called $\textit{samples}$. A fundamental inverse problem is to recover the dynamical system from \textit{samples} of evolving signals.

In the case of known dynamics, the inverse problem reduces to the recovery of the initial state and is also called source localization problem. But in many cases, the dynamics are also unknown and  need to estimate from the data.  Recently,  this type of inverse problem has attracted a lot of attention in the graph signal processing community \cite{ioannidis2018semi,pasdeloup2016characterization,ioannidis2018inference,dong2016learning,thanou2017learning,segarra2017network,mateos2019connecting,pasdeloup2017characterization}: the system matrices of the underlying dynamical systems are related to the topology of the underlying graph, such information is not available in many applications, and needs to be estimated. This is not only for enhancing data processing tasks but also for data interpretability, i.e., the graph topology provides an abstraction for the underlying data dependencies.

The previous methods typically assume the signals are fully observed. However, one may only afford to measure the values of the signals at a subset of coordinates, due to the high cost of building accurate sensors and application-specific restrictions. As a result, there is a possible significant loss of spatial information in each step of data acquisition. The inverse problem becomes in general ill-posed. In particular, in certain situations, we are only able to measure the dynamical system from a single trajectory, since the measurement process results in the destruction or alteration of the system under study. In such scenarios, the exact recovery of the dynamical systems is in general infeasible. However, one may still hope to recover the key information of the dynamical system. 

The spectrum of the system matrix provides valuable information about the underlying dynamical system. For example, for linear autonomous systems, the spectrum plays a crucial role in analyzing the stability of the dynamical system. For random walks or diffusion processes over the graphs, the spectrum reveals the structure information of the underlying graphs \cite{van2003graphs}. This connection has been extensively studied in the field of spectral graph theory.  The notable examples include the complete graphs and finite star-like trees, which are completed determined by the spectrum. That is to say, any graphs with the same spectrum are isomorphic \cite{van2003graphs}. Quoting the sentences in \cite{chung1997spectral}: ``We will see that eigenvalues are closely related to almost all major invariants of a graph, linking one extremal property to another. There is no question that eigenvalues play a central role in our fundamental understanding of graphs."

In this paper, we are interested in recovery of spectrum of the system matrix that generates the dynamical system from partial observations of a single trajectory. Affine dynamical systems are a natural starting point because they have a simple structure yet broad applications, including random walks on graphs \cite{lovasz1993random}, diffusion processes \cite{thanou2017learning}, linear mechanical and electronic systems \cite{hespanha2018linear}, compartmental models in biological modeling such as pharmacodynamics, gene regulation \cite{godfrey1983compartmental, bonate2011pharmacokinetic,holter2001dynamic}. The inverse problem in this setting is nonetheless nontrivial because the solution to such a system depends nonlinearly on the system matrix. We shall begin with a discrete finite dimensional affine system:
\begin{align}
x_{t+1}&=Ax_t+c, t=0,1,2,\cdots,\label{linearsystem0}\\
x_0&=b. \label{state}
\end{align} In \eqref{linearsystem}, $x_t \in \mathbb{C}^d$ is the state of the system at time $t$, the vector $b\in \mathbb{C}^d$ is the unknown initial state and the vector $c\in \mathbb{C}^d$ can be viewed as an unknown control or external force term. The system matrix $A\in \mathbb{C}^{d\times d}$ is unknown. We shall also consider the continuous-times analogue of \eqref{linearsystem0}:
\begin{align}\label{cont}
\dot x(t) &=Ax(t)+c, t\geq 0 \\
x(0)&=b;
\end{align} 

 We then formulate the inverse problem in the most general setting, which we call the \textit{dynamical sampling} problem, as follows:

\begin{problem}[Dynamical sampling problem]\label{ds1}
Suppose that we observe the affine dynamical system at time instances $ \tau$, for each $t\in \tau$, only part of the state $x_t$ is observed, $\{x_t(i): i \in \Omega_t \}$ where $\Omega_t\subset \{1,\cdots, d\}$, under what conditions on observed locations $\{\Omega_t\}_{t\in \tau}$, the initial condition $b$ and the control term $c$ such that the key parameters 
of $A$ can be recovered from the space-time samples? If so, what are algorithms that perform efficient reconstructions? 
\end{problem}

Problem \ref{ds1} exhibits features that are similar to many fundamental problems in the interface of signal processing, machine learning, and control theory of dynamical systems: observability of the dynamical systems, network topology identification, super-resolution, deconvolution, completion of the low-rank matrices. However, even in the most basic cases, the dynamical sampling problems are different and necessitate new theoretical and algorithmic techniques. 

In this paper, we investigate the case where $\Omega_t \equiv \Omega \subset [d]:=\{1,2,\cdots,d\}$ and restrict our attention to the recovery of eigenvalues of $A$. We define by $S_{\Omega}$ the observation matrix that 
\begin{align}\label{traj}
 S_{\Omega}x_t=\sum_{i\in \Omega}x_t(i)e_i
 \end{align}where $\{e_i\}_{i=1}^{d}$ is the standard orthonormal basis in $\mathbb{C}^d$. Given the partial observation of a single trajectory \begin{align}
 \{S_{\Omega}x_t, t=0,1,\cdots\},
 \end{align} we develop theory and algorithms for solving the inverse problem of recovering eigenvalues of $A$. 
 
\begin{figure}[!h]\hspace{3mm}
    \minipage{0.25\textwidth}
    \includegraphics[width=\linewidth]{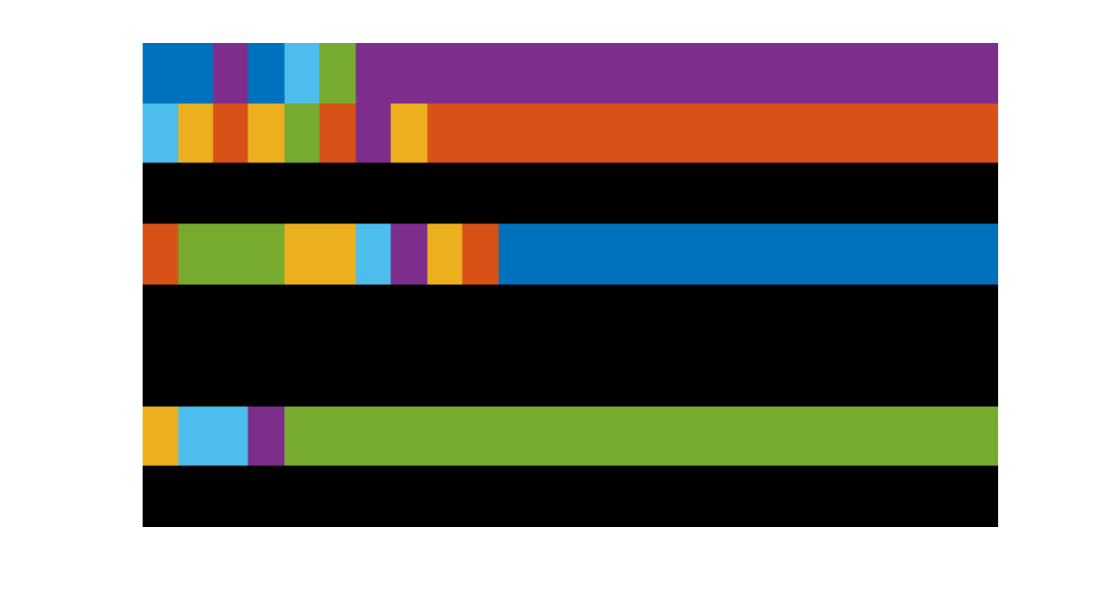}
    \endminipage\hspace{9mm}
    \minipage{0.25\textwidth}
    \includegraphics[width=\linewidth]{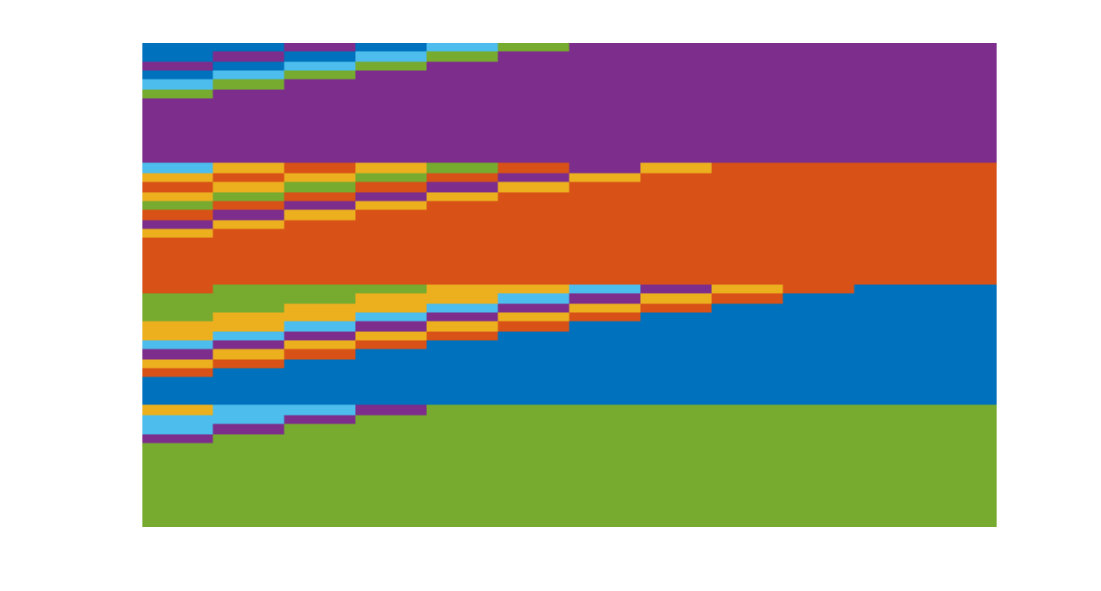}
    \endminipage\hspace{20mm}
    \minipage{0.12\textwidth}
    \centering
    $\hat \lambda_1,\hat \lambda_2,\dots,\hat \lambda_r$
    \endminipage
\end{figure}
\vspace{-5mm}
\begin{tikzpicture}[>=latex']
    \tikzset{block/.style= {draw, rectangle, align=center,minimum width=2cm,minimum height=1cm},
    rblock/.style={draw, shape=rectangle,rounded corners=0.5em,align=center,minimum width=2cm,minimum height=1cm},
    input/.style={ 
    draw,
    trapezium,
    trapezium left angle=60,
    trapezium right angle=120,
    minimum width=2cm,
    align=center,
    minimum height=1cm
}
    }
    \node [rblock] (observation) {Partial Observations\\of Dynamical System};
    \node [block, right = 1.4cm of observation] (prony) {Prony-like Method};
    \node [rblock, right = 1.4cm of prony] (recovery) {Recovered Eigenvalues\\ of Operator};
    \path[draw,->] (observation) edge (prony)
                   (prony) edge (recovery)
                ;
\end{tikzpicture}
 \subsection{Summary of contributions}
 
We begin by showing that, an annihilating polynomial of $A$ related to $\Omega$ and system parameters can be uniquely determined from \eqref{traj}, whose roots are a subset of eigenvalues of $A$. We then derive necessary and sufficient conditions on the interplay between $\Omega$ and the system parameters to characterize which eigenvalues of $A$ are $\Omega$-recoverable. In particular, we provide characterizations on the universal construction of $\Omega$ that allows the recovery of all eigenvalues of $A$ almost surely. Our theorems shed light on the minimal cardinality of $\Omega$ and guide  constructions of $\Omega$ to recover the target eigenvalues. For numerical algorithms, our proofs  provide a Prony-type method to reconstruct eigenvalues. We also generalize the classical matrix pencil method and estimation of signal parameters via rotational invariant techniques (ESPRIT) to recover eigenvalues and provide theoretical guarantees. Finally, we perform a systematic numerical study to examine the accuracy of the reconstruction and compare the performance of the proposed algorithms for various observational parameters on a variety of examples. We also test the effectiveness of our approach on a real human motion data set.

In summary, the main contributions of this paper are (i) characterizing the uniqueness of annihilating polynomial and the relationship between $\Omega$ and recoverable eigenvalues for affine systems; (ii)characterization of universal constructions of observational locations (iii) proposing various algorithms based on the Prony-method, matrix pencil, and ESPRIT with superior numerical performance.

Our work is built upon recent  progress in studying dynamical sampling problems  where $A$ is known and the goal is to recover the initial state (\cite{aldroubi2017dynamical,tang2017universal,aldroubi2020phaseless}, see \cite{aldroubi2013dynamical,aldroubi2015finite, aldroubi2015exact,aldroubi2017iterative, cabrelli2020dynamical, aldroubi2019frames, christensen2019frame} for recent developments) and the work on system identification aspect \cite{aldroubi2014krylov} and \cite{tang2017system}. In \cite{aldroubi2014krylov}, Aldroubi \textit{et al} studied the homogeneous case when $c=0$ and $A$ is diagonalizable.  They propose Prony-type methods but no numerical examples were presented. In this work, we extend their analysis to affine systems with general system matrices, and propose various algorithms. This extension allows broad applications, such as random walks and diffusion process on directed graphs, and compartmental model in disease modeling. We further derive results for universal selection of observation locations and the continuous-time systems

During the finalization of our work, we noticed the recent work \cite{beinert2021phase} where they propose approximate Prony method to recover the eigenvalue of $A$ in the homogeneous linear dynamical system when the data samples lose the sign information. They prove results on uniqueness for the cases where $A$ is diagonalizable and the eigenvalues of $A$ are collision-free. Our results hold for arbitrary matrix $A$ and we allow the multiplicities of eigenvalues greater than 1. We would expect that our method can be extended to the phaseless samples case.




\subsection{Connection with other fields/Related work}
\paragraph{Connection to system identification problem}  Consider a time-invariant linear dynamical system
\begin{align}
x_{t+1}&=Ax_t+Bc_t\\
y_t&=Cx_t
\end{align} where $c_t\in \mathbb{R}^d$ is the input vector and $y_t \in \mathbb{R}^s$ is the output vector. The parameter estimation problem considered in control theory aim to recover the parameter matrices $A, B \in \mathbb{R}^{d\times d}, C\in \mathbb{R}^{s\times d}$ from the output vectors $\{y_t\}$, provided a set of input vectors $\{x_t\}$. The classical results show that $C$ has to be full-rank (i.e, the states are all observed) to make identifiability of parameter matrices possible, see the survey in  \cite{viberg1995subspace} as well as the introduction section in \cite{coutino2020state}.  In the special case of $B=0$, and $C=I$, i.e., $\Omega_t\equiv \{1,\cdots,d\}$ in our setting,  the identifiability of $A$ from  a single trajectory  with a fixed initial condition  has been studied in \cite{stanhope2014identifiability} and later been studied in  affine  dynamical system ($B=I, C=I, c_t\equiv c$) in \cite{duan2020identification}.  It has been show the sufficient and necessary condition is that $\{x_0,Ax_0,\cdots, A^Lx_0\}$ needs to span $\mathbb{C}^d$, which means that $A$ has only one Jordan block for each of its eigenvalues and impose constraints on $x_0$, we refer to \cite{stanhope2014identifiability,duan2020identification} for more details.  This tells us why in the case of partial observations from single trajectory, one should not hope for the full reconstruction of matrix $A$ in general.  It is also mentioned in \cite{duan2020identification} that one can predict the time-dependence for any observable component of the state variable, as long as a  Hankel matrix constructed from data is invertible.   This claim now becomes a special case of Proposition 2.1 developed in this paper. Our result further characterizes when this Hankel matrix is invertible. 
\paragraph{Connection to structured signal recovery problem} Parameter estimation problems of structured signals have been extensively studied in  signal processing. In \cite{peter2013generalized}, the authors present a abstract formulation : let $V$ be a normed vector space over $\mathbb{C}$ and $A$ be a \textit{known} linear operator from $V$ to $V$, one is interested in recovering a signal $b\in V$ that is $M$-sparse with respect to eigenfunctions of $A$
\begin{align}\label{eigenexpansion}
b=\sum_{j\in J}c_jv_j, \text{ with }  |J|=M.
\end{align} The goal is to recover $\{c_j\}$ and $\{v_j\}$ from data samples $\mathcal{F}(A^lb)$ for $l=0,1,\cdots,L$ where $\mathcal{F}:V\rightarrow \mathbb{C}$ is a linear functional. The specific instances include the super-resolution, blind deconvolution, recovery of signals with finite rate innovations, and we refer to \cite{peter2013generalized} for more details.  The keys behind successful recovery of the structured signal $b$ are (1) the eigenvalues $\{\lambda_j\}_{j\in J}$ of $A$ corresponding to $\{v_j\}_{j\in J}$ can be recovered from data under suitable assumptions on $\mathcal{F}$ (2) $A$ is known and its eigenvalues are assumed to have geometric multiplicity 1, therefore finding eigenvectors suffices to finding their corresponding eigenvalues.  In \cite{peter2013generalized}, the authors proposed generalized Prony method to recover the eigenvalues for $L=2M-1$. %


Back to our setting,  assume that  $A$ is diagonalizable and $c=0$ in \eqref{linearsystem0}, and we take $\Omega_l\equiv \{e_i\}$ for some $i \in [d]$  so that  the observational functional  $\mathcal{F} (x_l)=\langle x_l,e_i\rangle=x_l(i)$.  In this case, we use eigenvectors of  $A$ as  basis  and represent the initial condition $b$ as in \eqref{eigenexpansion}. Then the structured signal recovery problem becomes a special case of the dynamical sampling problem considered here. The Theorem \ref{dyn-sampling} developed in this paper generalizes the main results Theorem  2.1 and Theorem 2.3 in \cite{peter2013generalized} in two aspects: (i) our result implies the sufficient conditions on $\mathcal{F}$ is  also necessary (ii) our result provided various sufficient and necessary characterizations to  answer which eigenvalues of $A$ are $\Omega$-recoverable for arbitrary $\Omega \subset \{1,2,\cdots,d\}$.

%


\subsection{Notation}

In the following, we use standard notations. By $\mathbb{N}$, we denote the set of all positive integers. For a positive integer $d$, we use the notation $[d]$ to represent the set $\{1,2,\cdots,d\}$. The linear space of all column vectors with $M$ complex components is denoted by $\mathbb{C}^M$. The linear space of all complex $M \times N$ matrices is denoted by $\mathbb{C}^{M\times N}$.  For a matrix $A=(a_{ij}) \in \mathbb{C}^{M\times N}$, its transpose is denoted by $A^T$, its conjugate-transpose by $A^*$ and its Moore-Penrose pseudoinverse by $A^+.$   For a vector $z=(z_i) \in \mathbb{C}^M$, the $M \times M$ diagonal matrix built from $z$ is denoted by $\mathrm{diag}(z)$. Further,  we use submatrix notation similar to that of MATLAB. For example, if $A \in \mathbb{C}^{M, M+1}$, then ${A}(1 : M, 2 : M + 1)$ is the submatrix of $A$ obtained by extracting rows 1 through $M$ and columns 2 through $M + 1$, and $A(1 : M, M + 1)$ means the last column vector of $A$.

\subsection{Preliminaries}

Throughout the paper,  we assume that the system matrix  $A \in \mathbb{C}^{d\times d}$ and it has distinct eigenvalues $\lambda_1,\dots,\lambda_n$. Consider the Jordan decomposition $A=UJU^{-1}$, where $U \in \mathbb{C}^{d\times d}$ is invertible and the Jordan matrix $J\in  \mathbb{C}^{d\times d}$ is  block diagonal  defined as follows:	\begin{equation}\label{Jordan1}
	    J=\left(\begin{array}{cccc}
	    J_{1} &   O   &   \dots   &   O   \\
	    O   &   J_{2} &   \dots   &   O   \\
	    \vdots& \vdots& \ddots  &   \vdots\\
	    O   &   O   &   \dots   &   J_{n} \\
	    \end{array}\right).
	\end{equation} In \eqref{Jordan1}, for $s=1,\cdots,n$, the Jordan block $J_s$ corresponds to $\lambda_s$ and  $J_s = \lambda_s I_s +  N_s$ where $I_s$ is the identity matrix of dimension $h_s$, and $N_s$ is a nilpotent block-matrix of dimension $h_s$:
\begin{equation}\label{Nil1}
N_s= \left( \begin{array}{cccc}
N_{s_1} & 0 & 0 &0 \\
0 & N_{s_2}& 0&0 \\
0 & 0 & \ddots&0 \\
0&0&0&N_{s_{r_s}}\end{array} \right)
\end{equation}
where each $N_{s_i}$ is a $t_{i}^{(s)}\times t_{i}^{(s)}$ cyclic nilpotent matrix of the form \eqref{Nil2}, \begin{equation}\label{Nil2}
N_{s_i}=\left( \begin{array}{cc}
0 & 0 \\
I_{t_i^{(s)}-1}& 0
\end{array} 
\right) = \left( \begin{array}{cccccc}
0 & 0 &\cdots & 0 &0&0\\
1& 0&\cdots& 0&0&0\\
0& 1&\cdots& 0&0&0\\
\vdots & \vdots & \ddots&\vdots&\vdots&\vdots\\
0&0&\cdots&1&0&0 \\
0&0&\cdots&0&1&0
\end{array} 
\right)
\end{equation} with $t_{1}^{(s)} \geq t_{2}^{(s)}\geq \ldots \geq t_{r_s}^{(s)}$ and $t_{1}^{(s)} + t_{2}^{(s)}+\cdots+ t_{r_s}^{(s)}=h_s$. Also $ h_1 + \ldots + h_n = d$.

Each Jordan block $J_s$ corresponds to an invariant subspace $V_s$ of $A$ and the Jordan form $J$ gives a decomposition of $\mathbb{C}^d$ into invariant subspaces of $A$: $\mathbb{C}^{d}=\oplus_{i=1}^{s}V_i$. We define the projection onto $V_s$ by $P(\lambda_s;A)$. Similarly, we denote by $E_s$ the invariant subspace of $J$ corresponding to the block $J_s$; $E_s$ is spanned by the canonical basis $\{e_i: i=h_1+\cdots+h_{s-1}+1,\cdots, h_1+\cdots+h_{s} \}$ ($h_0=0$ for the case $s=1$); the projection onto $E_s$ is denoted by $P(\lambda_s;J)$ and we have $P(\lambda_s;J)=UP(\lambda_s;A)U^{-1}$. Any vector $f\in \mathbb{C}^d$ admits the unique decomposition $f=\sum_{s=1}^n f_s$, where $f_s=P(\lambda_s;J)f \in E_s$.

Note that $J_{\big|E_s}=\lambda_s+N_s$, 
we will use $Jf_s=(\lambda_s+N_s)f_s$ by viewing the vector $f_s \in \mathbb{C}^{h_s}$ and interpreting the vector $(\lambda_s+N_s)f_s$ as its canonical embedding in $\mathbb{C}^d$.

With the abuse of notation, if $N_s\equiv 0$ for $s=1,\cdots,n$, then $J$ in \eqref{Jordan1} reduced to  a diagonal matrix of the form 

\begin{equation}\label{D1}
	    D=\left(\begin{array}{cccc}
	    D_{1} &   O   &   \dots   &   O   \\
	    O   &   D_{2} &   \dots   &   O   \\
	    \vdots& \vdots& \ddots  &   \vdots\\
	    O   &   O   &   \dots   &   D_{n} \\
	    \end{array}\right).
	    \end{equation}

In this case, $A$ is diagonalizable.

\begin {definition} \label {defJord} Let $k_i^{s}$ denote the row index corresponding to the entry 1 in the last nonzero column of the block $N_{s_i}$ \eqref{Nil2} from the matrix $J$ \eqref{Jordan1}, and let $e_{{k}_{i}^{s}}$ be the corresponding elements of the standard basis of $\mathbb{C}^n$, so that each  $e_{{k}_{i}^{s}}$ is the cyclic vector associated to $N_{s_i}^*$. We also define  $W_s=span\{e_{{k}_{i}^{s}}: i=1,\ldots, r_s\},$ for $s=1,\ldots,n$, and $P_s$ will denote the orthogonal projection onto
$W_s$. The family $ P_J=\{P_j: j=1\ldots n\}$ comprised of these projections will be called the \emph{penthouse family} of the matrix $J$. 
\end {definition}

Let us give an example to illustrate the definition above. Consider a Jordan matrix as
\begin{equation}
J = \begin{pmatrix}J_1&\\&J_2\end{pmatrix}=\left(\begin{array}{ccc ccc}3&& &&&\\ 1&3& &&&\\ &1&3 &&&\\ && &3&&\\ && &1&3&\\ && &&&2 \end{array}\right).
\end{equation}
It has 3 nilpotent blocks given as $N_{11}$, $N_{12}$ and $N_{21}$. The cyclic vectors associated to $N_{11}^*$, $N_{12}^*$ and $N_{21}^*$ are $e_3$, $e_5$ and $e_6$, respectively.


		\begin{definition} We introduce three kinds of minimal polynomials that are useful in the paper. 
 \begin{itemize}
		\item For $A \in \mathbb{C}^{d\times d}$, the minimal polynomial $q^A$ is the monic polynomial of the smallest degree, such that $q^A(A)\equiv0$, and we denote its degree as $r^A = deg(q^A)$. 
		\item  For any matrix $S\in \mathbb{C}^{m\times d}$, the $S$-altered minimal polynomial of $A$, denoted by $q_{S}^{A}$, is the monic polynomial of smallest degree among all the polynomials p such that $Sp(A)$ = 0 and  $r_{S}^{A}:=deg(q_{S}^{A})$
		\item The $A$-minimal polynomial $q_b^A$ for a vector $b$ in $\mathbb{C}^{d}$ is the monic polynomial of the smallest degree, such that $q_b^A(A)b\equiv0$, and we denote its degree as $r^A_b = deg(q^A_b)$.
\end{itemize} 
\end{definition}
	
\begin{definition}  A Krylov space of order $r$ generated by $A \in \mathbb{C}^{d\times d}$ and $b \in \mathbb{C}^d$, is defined by 	\begin{equation}\mathcal{K}_r(A,b) := span\{b,Ab,\dots,A^{r-1}b\}.\end{equation}
In particular, for any $r\geq r_b^A-1$, we will denote $\mathcal{K}_r(A,b)=\mathcal{K}_{\infty}(A,b)$. 
    \end{definition}

	\begin{definition}\label{tripleminpoly}
	  Let $S\in \mathbb{C}^{m\times d}, A\in \mathbb{C}^{d\times d}$ and $f\in \mathbb{C}^d$. The $(S,A,b)$-annihilating polynomials are all polynomials $q$ such that $Sq(A)\mathcal{K}_{r_{S}^{A}}(A,b)=\{0\}$. We denoted by $r_{S,b}^{A}$, the smallest degree among all monic  $(S,A,b)$-annihilating polynomials. 
\end{definition}

	\subsection{Solution formulas to affine  systems}
In this section, we present the explicit solution formulas in terms of $A, b$ and $c$ for system \eqref{linearsystem} and its continuous-time counterpart, which will  be useful in our analysis. Let $x_t$ be the solution of discrete system \eqref{linearsystem}. Then using the recursive relation, we obtain that 
\begin{align}\label{sol1}
x_t =A^tb+(A^{t-1}+\cdots+I)c, t=1,2,\cdots, 
\end{align}

For the continuous-time system \eqref{cont}, we let $x^{\mathrm{cont}}_t$ be the solution of  system \eqref{cont}. Then the solution can be obtained by differentiating \eqref{cont} and 
solving corresponding linear initial value problem for $\dot{x}_t$:
\begin{align}
x^{\mathrm{cont}}_t= e^{tA}b+g(t;A)c, t\geq 0
\end{align}where 
\begin{align}
g(t;A)&=\sum_{k=0}^{\infty}\frac{t^{k+1}}{(k+1)!}A^k=tI+\frac{t^2}{2}A+\frac{t^3}{3!}A^2+\cdots.
\end{align}
We also list several useful properties of $g(t;A)$:
\begin{align*}
\frac{d}{dt}g(t;A)&=e^{tA};  \quad\quad
Ag(1;A)=g(1;A)A=e^A-I.
\end{align*} In particular, if $A$ is invertible, then $g(1;A)=(e^A-I)A^{-1}$. 

\section{Main Results}\label{sec2:Main-Results}
\subsection{Discrete-time affine dynamical systems}
 \subsubsection{Case $c=0$.}\label{homogeneous}
We will begin with the homogeneous linear dynamical system with $c=0$. In this case, the discrete dynamical system reduces to 
\begin{align}\label{HDS}
x_t=A^tx_0, x_0=b.
\end{align} 
 
Given the observation locations $\Omega\subset [d]$, we show that the minimal $(S_{\Omega},A,b)$-annihilating polynomial $q_{S_{\Omega},b}^A$ can be uniquely recovered from 
$ \{S_{\Omega}(x_t):t=0,1,\cdots,\}$ and the roots of  $q_{S_{\Omega},b}^{A}$ are  eigenvalues  of $A$. Here by uniqueness, we mean that if $S_{\Omega}x_t=S_{\Omega}\tilde x_t$ for $t=0,1,\cdots$, where $\tilde x_t$ is the trajectory of the system \eqref{HDS} with a system matrix $\tilde A$ and an initial state $\tilde b$,  then $q_{S_{\Omega},b}^A=q_{S_{\Omega},\tilde b}^{\tilde A}$. 
 
 \begin{proposition} \label{Prony}Given partial observations of trajectory data determined by $\Omega\subset [d]$     $$S_{\Omega}(x_t), t=0,\cdots,2r-1, r=r_{S_{\Omega},b}^{A},$$
     we construct a Hankel matrix 
     
  \begin{equation}\label{eq:Hankel}
        H : = \begin{bmatrix} S_{\Omega}b &\dots& S_{\Omega} A^{r-1}b\\
        \vdots&\cdots&\vdots\\
     S_{\Omega}A^{r-1}b &\dots& S_{\Omega}A^{2r-2}b \end{bmatrix}.
    \end{equation}
 Then $H$ is of full column rank and there exist a unique solution  $q=[q_1,\cdots,q_r]^T \in \mathbb{C}^{r}$ to the linear system
  
  \begin{equation}\label{eq:Prony}
        {H}q = - h_{\Omega,r}, {h}_{\Omega,r} = \begin{bmatrix} S_{\Omega} A^rb \\ \vdots\\  S_{\Omega} A^{2r-1}b \end{bmatrix}.
            \end{equation}
We have that,  $q_{S_{\Omega},b}^A(z)=z^{r}+\sum_{i=1}^{r}q_{i}z^{i-1}$. In addition,  the roots of $q_{S_{\Omega},b}^A$ are eigenvalues of $A$. 
\end{proposition}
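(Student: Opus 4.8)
The plan is to unpack what it means for $q = q_{S_\Omega,b}^A$, the minimal $(S_\Omega,A,b)$-annihilating polynomial, to have degree $r = r_{S_\Omega,b}^A$, and to translate this into statements about the Hankel matrix $H$ and the linear system \eqref{eq:Prony}. Write $q(z) = z^r + \sum_{i=1}^r q_i z^{i-1}$; by Definition \ref{tripleminpoly} this is the monic polynomial of least degree with $S_\Omega q(A)\mathcal{K}_{r_{S_\Omega}^A}(A,b) = \{0\}$. First I would observe that $\mathcal{K}_{r_{S_\Omega}^A}(A,b)$ is $A$-invariant (since $r_{S_\Omega}^A \geq r_b^A - 1$, one has $\mathcal{K}_{r_{S_\Omega}^A}(A,b) = \mathcal{K}_\infty(A,b)$, the full Krylov space), so the annihilation condition $S_\Omega q(A) A^j b = 0$ holds for \emph{all} $j \geq 0$, not just $j \leq r_{S_\Omega}^A - 1$. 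Written out with the coefficients of $q$, this says exactly that for every $j \geq 0$,
\begin{equation}
S_\Omega A^{r+j} b + \sum_{i=1}^r q_i\, S_\Omega A^{i-1+j} b = 0,
\end{equation}
which is precisely the statement that the coefficient vector $q = [q_1,\dots,q_r]^T$ solves $Hq = -h_{\Omega,r}$ (taking $j = 0,\dots,r-1$ gives the $r$ block rows of \eqref{eq:Prony}). So existence of a solution is immediate; the content is full column rank of $H$ and uniqueness.

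Next I would prove $H$ has full column rank $r$. Suppose $p = [p_1,\dots,p_r]^T$ lies in the kernel of $H$; I want to show $p = 0$. The condition $Hp = 0$ says $\sum_{i=1}^r p_i\, S_\Omega A^{i-1+j} b = 0$ for $j = 0,\dots,r-1$, i.e. $S_\Omega\, \tilde p(A)\, A^j b = 0$ for $j = 0,\dots,r-1$, where $\tilde p(z) = \sum_{i=1}^r p_i z^{i-1}$ has degree $< r$. The key point is to bootstrap this from $j \leq r-1$ to all $j \geq 0$: using the Cayley–Hamilton-type relation that $A^{r_b^A} b$ is a linear combination of $b, Ab,\dots,A^{r_b^A-1}b$ (definition of the $A$-minimal polynomial $q_b^A$), together with $r \geq r_b^A$ — which I would need to argue, since $r_{S_\Omega,b}^A \le r_b^A$ because $q_b^A$ itself is an $(S_\Omega,A,b)$-annihilating polynomial, and in the nontrivial case one checks $r = r_{S_\Omega,b}^A$ is at least... — actually the cleaner route is: the span of $\{A^j b : j \geq 0\}$ equals $\mathcal{K}_{r_b^A}(A,b)$, and since the vectors $A^j b$ for $j = 0, \dots, r-1$ together with the annihilation relation generate all higher powers modulo the kernel condition, $S_\Omega \tilde p(A) A^j b = 0$ extends to all $j\ge 0$ provided $r \ge r_b^A$. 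Hence $\tilde p$ is an $(S_\Omega,A,b)$-annihilating polynomial of degree $< r = r_{S_\Omega,b}^A$, forcing $\tilde p \equiv 0$, i.e. $p = 0$. (If $r < r_b^A$ a small separate argument is needed, but in fact one always has $r_{S_\Omega,b}^A \ge r_b^A$ is false in general — I would need to handle the relation between $r_{S_\Omega,b}^A$, $r_{S_\Omega}^A$ and $r_b^A$ carefully here; this is the main technical obstacle.) Once $H$ has full column rank, the solution of the (consistent, by the existence argument) square system $Hq = -h_{\Omega,r}$ is unique, and it must be the coefficient vector of $q_{S_\Omega,b}^A$.

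Finally I would show the roots of $q_{S_\Omega,b}^A$ are eigenvalues of $A$. The cleanest argument: $q_{S_\Omega,b}^A$ divides any polynomial $p$ with $S_\Omega p(A)\mathcal{K}_\infty(A,b) = \{0\}$ — in particular it divides the minimal polynomial $q^A$ of $A$ (since $q^A(A) = 0$). Indeed, dividing $q^A = g\cdot q_{S_\Omega,b}^A + s$ with $\deg s < \deg q_{S_\Omega,b}^A$, the remainder $s$ also annihilates in the required sense, so $s \equiv 0$ by minimality. Since every root of $q^A$ is an eigenvalue of $A$ and the roots of a divisor are among the roots of the dividend, every root of $q_{S_\Omega,b}^A$ is an eigenvalue of $A$. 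The main obstacle throughout is bookkeeping the three degrees $r_b^A, r_{S_\Omega}^A, r_{S_\Omega,b}^A$ and justifying the "extend from $j \le r-1$ to all $j$" step in the rank argument; everything else is linear-algebra manipulation of the Hankel structure.
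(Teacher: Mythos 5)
Your skeleton matches the paper's at a high level (existence from the definition of $q_{S_{\Omega},b}^A$, uniqueness and full column rank by showing a nonzero kernel vector of $H$ would produce a lower-degree annihilator, roots via divisibility), but the step you yourself flag as ``the main technical obstacle'' is a genuine gap, and both mechanisms you propose for it fail. First, $r_{S_{\Omega}}^A \geq r_b^A-1$ is false in general, so $\mathcal{K}_{r_{S_{\Omega}}^A}(A,b)$ need not be the full Krylov space: take $A=\mathrm{diag}(1,2,3)$, $\Omega=\{1\}$, $b=(1,1,1)^T$; then $q_{S_{\Omega}}^A(z)=z-1$ so $r_{S_{\Omega}}^A=1$, while $r_b^A=3$ and $\mathcal{K}_{1}(A,b)=\mathrm{span}\{b\}$ is one-dimensional. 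Second, $r=r_{S_{\Omega},b}^A\geq r_b^A$ also fails (in the same example $r=1$), so the Cayley--Hamilton-style reduction of $A^jb$ into $\mathcal{K}_{r_b^A}(A,b)$ cannot be used to extend $S_{\Omega}\tilde p(A)A^jb=0$ from $j\le r-1$ to larger $j$. (Existence, by contrast, is fine without any extension: one only needs the equations for $j\le r-1$, and these are covered by the definition because $r_{S_{\Omega},b}^A\le r_{S_{\Omega}}^A$, since $q_{S_{\Omega}}^A$ is itself an $(S_{\Omega},A,b)$-annihilator.)

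The correct mechanism, which is the content of the paper's Lemma \ref{lemma3}, is division with remainder by $q_{S_{\Omega},b}^A$ itself rather than a comparison of degrees: for $j\ge r$ write $z^j=p_j(z)q_{S_{\Omega},b}^A(z)+h_j(z)$ with $\deg h_j\le r-1$; then
\begin{equation*}
S_{\Omega}\tilde p(A)A^jb \;=\; S_{\Omega} q_{S_{\Omega},b}^A(A)\bigl[\tilde p(A)p_j(A)b\bigr]\;+\;S_{\Omega}\tilde p(A)h_j(A)b,
\end{equation*}
where the second term vanishes by the hypothesis $Hp=0$ and the first vanishes because $S_{\Omega}q_{S_{\Omega},b}^A(A)h(A)b=0$ for \emph{every} polynomial $h$ --- this ``key relation'' \eqref{keyrelation} is established in Lemma \ref{lemma2} by the same trick with $q_{S_{\Omega}}^A$ in place of $q_{S_{\Omega},b}^A$, and is exactly what makes the set of $(S_{\Omega},A,b)$-annihilators an ideal. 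Once that ideal property is in hand, your closing argument that $q_{S_{\Omega},b}^A$ divides the minimal polynomial $q^A$ (hence its roots are eigenvalues) is correct, and is a slightly more direct route than the paper's, which instead invokes the factorization $q_{S_{\Omega},b}^A=\prod_s(z-\lambda_s)^{r_{S_{\Omega},b_s}^A}$ from Lemma \ref{lemma2}; but without the ideal property neither your divisibility argument nor your rank argument closes.
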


\begin{proof} Suppose that $Hq=-h_r$. Let the polynomial $q(z)=z^r+\sum\limits_{k=0}^{r-1}q_{k+1}z^k$. Then it follows that 

$$S_{\Omega}q(A)A^tb=0,t=0,\cdots,r-1.$$ By Lemma \ref{lemma3}, the solution is unique and $q=q_{S_{\Omega},b}^A$. It follows from the equation \eqref{Smindegree} in Lemma \ref{lemma2} (see Appendix),  that the roots of $q_{S_{\Omega},b}^A$ are eigenvalues of $A$. 

\end{proof}

Proposition \ref{Prony} in fact provides us with a Prony-type algorithm to reconstruct the annihilating polynomial $q_{S_{\Omega},b}^A$ from data, by solving the Hankel-type equation \eqref{eq:Prony}. Then we find roots of $q_{S_{\Omega},b}^A$ which are a part of eigenvalues of $A$. However,  a key question still not addressed  is that which eigenvalues of $A$ can be recovered. Our goal is to find  the relationship between $\Omega$ and recoverable eigenvalues. Such results are useful in the selection of $\Omega$ to recover the target eigenvalues.

\begin{theorem}\label{dyn-sampling} Assume that the evolution matrix $A \in \mathbb{C}^{d\times d}$ and its Jordan decomposition can be written as $A=UJU^{-1}$ where $J$ is a Jordan matrix as in \eqref{Jordan1}.  Let $b \in \mathbb{C}^d$ be the initial state. Then the polynomial $q_{S_{\Omega},b}^A$ can be  uniquely determined from $\{S_{\Omega}x_t: t=0,1,\cdots\}$. In addition, the following statements are equivalent: for $s=1,\cdots,n,$
\begin{itemize}
\item[(1)] $\lambda_s$ is \textbf{not} a root of $q_{S_{\Omega},b}^A$. 
\item [(2)]$(U^{-1}b)_s\perp \mathcal{K}_{\infty}(N_s^{\ast},(U^*e_i)_s)$ for all $i\in \Omega$, where $(U^{-1}b)_s=P(\lambda_s; J)U^{-1}b$, $(U^*e_i)_s = P(\lambda_s; J)U^*e_i$. 

\item [(3)]$P^*(\lambda_s;A)e_i \perp \mathcal{K}_{\infty}(A,b) $ for all $i\in \Omega$, where $P^*(\lambda_s;A)$ denotes the adjoint operator of $P(\lambda_s;A)$.

\end{itemize}

\end{theorem}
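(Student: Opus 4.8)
The plan is to reduce everything to the structure of the Krylov space $\mathcal{K}_\infty(A,b)$ and the action of the projections $P(\lambda_s;A)$, then translate back and forth between the $A$-picture and the $J$-picture via the conjugation $A = UJU^{-1}$. The uniqueness of $q_{S_\Omega,b}^A$ from $\{S_\Omega x_t\}$ is already essentially Proposition \ref{Prony} together with the lemmas in the appendix (Lemma \ref{lemma3} gives that the Hankel system pins down exactly the minimal $(S_\Omega,A,b)$-annihilating polynomial, which depends only on the observed data), so the substance is the equivalence of (1), (2), (3). First I would record the characterization of the roots of $q_{S_\Omega,b}^A$: since $\mathbb{C}^d = \oplus_s V_s$ with $V_s$ the generalized eigenspace for $\lambda_s$, and since any polynomial $p$ satisfies $S_\Omega p(A)\mathcal{K}_\infty(A,b) = \{0\}$ iff $S_\Omega p(A) f = 0$ for every $f \in \mathcal{K}_\infty(A,b)$, I would decompose $p(A) = \sum_s p(A)P(\lambda_s;A)$ and observe that $p(A)$ acts on $V_s$ as $p(\lambda_s I + N_s)$, a polynomial in the nilpotent $N_s$. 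The minimal annihilating polynomial $q_{S_\Omega,b}^A$ therefore factors as a product over those $s$ that genuinely contribute, and $\lambda_s$ fails to be a root exactly when the $V_s$-component contributes nothing, i.e. when $S_\Omega p(A)P(\lambda_s;A) f = 0$ already holds with a $p$ not divisible by $(z-\lambda_s)$ — equivalently, when $P(\lambda_s;A)f \in \mathcal{K}_\infty(A,b)$ contributes no obstruction against any $S_\Omega e_i$.

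The cleanest route to the precise statement (1) $\Leftrightarrow$ (3) is duality. Note $S_\Omega g = \sum_{i\in\Omega}\langle g, e_i\rangle e_i$, so $S_\Omega p(A) f = 0$ for all $f$ in the Krylov space iff $\langle p(A) f, e_i\rangle = 0$ for all $i \in \Omega$ and all $f \in \mathcal{K}_\infty(A,b)$, iff $\langle f, p(A)^* e_i\rangle = 0$ for all such $f,i$. Now $p(A)^* = \overline{p}(A^*)$ and one checks $A^*$ has generalized eigenspaces $P^*(\lambda_s;A)\mathbb{C}^d$ with eigenvalue $\overline{\lambda_s}$; restricted there $A^*$ acts as $\overline{\lambda_s} I + N_s^*$-type nilpotent data. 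The statement that $\lambda_s$ is not needed as a root is then exactly the statement that the $\overline{\lambda_s}$-component of each $e_i$, namely $P^*(\lambda_s;A)e_i$, is already orthogonal to $\mathcal{K}_\infty(A,b)$, because that component is the only part of $e_i$ that $(z-\lambda_s)$-divisibility of $p$ is responsible for killing. I would make this rigorous by writing $p = (z-\lambda_s) \tilde p \cdot (\text{rest})$ and using that $(\lambda_s I + N_s - \lambda_s I) = N_s$ is nilpotent of index $t_1^{(s)}$, so a factor $(z-\lambda_s)^{t_1^{(s)}}$ annihilates $V_s$; minimality forces the exponent of $(z-\lambda_s)$ in $q_{S_\Omega,b}^A$ to be the least $m$ with $S_\Omega N_s^m$ (acting through $P$) vanishing on the relevant vectors, and $m=0$ is the content of (3).

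Then (2) $\Leftrightarrow$ (3) is pure translation through $U$: conjugating by $U$ sends $P(\lambda_s;A)$ to $P(\lambda_s;J)$, sends $\mathcal{K}_\infty(A,b)$ to $\mathcal{K}_\infty(J, U^{-1}b)$, and sends $e_i$ (in the dual pairing) to $U^* e_i$; restricting the $J$-side to the block $E_s$ turns $\mathcal{K}_\infty(J,U^{-1}b)$ into $\mathcal{K}_\infty(N_s, (U^{-1}b)_s)$ up to the $\lambda_s$-shift (which does not change the Krylov space), and the orthogonality $\langle \mathcal{K}_\infty(N_s,(U^{-1}b)_s), (U^*e_i)_s\rangle = 0$ is the same as $(U^{-1}b)_s \perp \mathcal{K}_\infty(N_s^*, (U^*e_i)_s)$ after moving $N_s$ across the inner product. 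I expect the main obstacle to be the bookkeeping in the middle step: carefully justifying that the exponent of $(z-\lambda_s)$ in the minimal $(S_\Omega,A,b)$-annihilating polynomial is governed precisely by the interaction of the nilpotent $N_s$ with the projected vectors, rather than getting contributions smuggled in from other blocks — this needs the fact that for $t \neq s$ the factor already present in $q$ makes $p(\lambda_t I + N_t)$ vanish, so the blocks decouple and one can analyze each $\lambda_s$ in isolation. The rest is linear algebra that I would relegate to the appendix lemmas already cited.
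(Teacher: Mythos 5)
Your proposal is correct and follows essentially the same route as the paper: uniqueness via Proposition \ref{Prony} and Lemma \ref{lemma3}, the block factorization $q_{S_\Omega,b}^A=\prod_s q_{S_\Omega U,(U^{-1}b)_s}^J$ with each factor a power of $(z-\lambda_s)$ (the paper's Lemma \ref{lemma2}, which is exactly your coprimality/decoupling step), reduction on each block to the nilpotent part $N_s$, and moving operators across the inner product together with the conjugation identities $P(\lambda_s;A)=UP(\lambda_s;J)U^{-1}$. The only cosmetic difference is the order --- you establish $(1)\Leftrightarrow(3)$ by duality in the $A$-picture and then translate to $(2)$, whereas the paper proves $(1)\Leftrightarrow(2)$ in the $J$-picture first and then derives $(3)$ --- but the ingredients and the bookkeeping are the same.
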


\begin{proof} The proof is based on Lemma \ref{lemma1}-Lemma \ref{lemma3} in the Appendix. 
\begin{itemize}
\item  Claim 1: the polynomial $q_{S_{\Omega},b}^A$ has $\lambda_s$ as one of its roots if and only if $\mathrm{deg}(q_{S_{\Omega}U,(U^{-1}b)_s}^J):=r_{S_{\Omega}U,(U^{-1}b)_s}^{J} \geq 1$.
 
  On one hand, $q_{S_{\Omega},b}^A=q_{S_{\Omega}U,U^{-1}b}^J$.  On the other hand,  by Lemma \ref{lemma2}, we have $q_{S_{\Omega}U,U^{-1}b}^J=\prod_{s=1}^{n}q_{S_{\Omega}U,(U^{-1}b)_s}^J$ where $(U^{-1}b)_s=P(\lambda_s;J)U^{-1}b$ and 
  $q_{S_{\Omega}U,(U^{-1}b)_s}^J(z)=(z-\lambda_s)^{r_{S_{\Omega}U,(U^{-1}b)_s}^{J}}$.  Therefore  Claim 1 is proved. 
 
 \item Claim 2:  $r_{S_{\Omega}U,(U^{-1}b)_s}^{J}=0$ if and only if $$ (U^{-1}b)_s \perp \mathrm{span}\{{N_s^{*}}^t \hat u_{s,i},  t=0,1,\cdots\}=\mathrm{span}\{{N_s^{*}}^t \hat u_{s,i},  t=0,1,m_{s,i}-1\}, i \in \Omega,$$ where $\hat u_{s,i}= (U^*e_i)_s$, and $m_{s,i}=r_{\hat u_{s,i}}^{N_s^*}$. \\
By Definition \ref{tripleminpoly}, $q_{S_{\Omega}U,(U^{-1}b)_s}^J$ satisfies 

 	\begin{equation}\label{minimalEquationhere}
		S_{\Omega}U q_{S_{\Omega}U,(U^{-1}b)_s}^J(J)J^{t}(U^{-1}b)_s =0, t=0,1,\cdots, r_{(U^{-1}b)_s}^J-1. 	
       \end{equation}
  Note that $(U^{-1}b)_s \in E_s:=P(\lambda_s;J)$ and therefore $J_{\big|E_s}=\lambda_s+N_s$. By the proof of Lemma \ref{lemma2}, we have $q_{S_{\Omega}U,(U^{-1}b)_s}^J(J)_{\big|E_s}=N_s^{r_{S_{\Omega}U,(U^{-1}b)_s}^{J}}$, the equations in \eqref{minimalEquationhere} are  equivalent to 
\begin{equation}\label{minimalEquationnew}
		S_{\Omega}U N_s^{r_{S_{\Omega}U,(U^{-1}b)_s}^{J}} (\lambda_s+N_s)^t(U^{-1}b)_s=0, t=0,1,\cdots, r_{(U^{-1}b)_s}^J-1, \end{equation} which can be further simplified as  
		
\begin{equation}\label{minimalEquationnew1}
		S_{\Omega}U N_s^{r_{S_{\Omega}U,(U^{-1}b)_s}^{J}} N_s^t (U^{-1}b)_s=0, t=0,1,\cdots, r_{(U^{-1}b)_s}^J-1. 
\end{equation}

Representing  each equation in \eqref{minimalEquationnew1} using the inner product, we obtain that, for $i \in \Omega$
\begin{align}\label{equation}
 \hat u_{s,i} \perp \mathrm{span}\{N_s^t (U^{-1}b)_s,  t=0,1,\cdots, r_{(U^{-1}b)_s}^J-1\},
 \end{align}
  where $\hat u_{s,i}=P(\lambda_s;J)U^*e_i$. 

Note that $\mathrm{span}\{N_s^t (U^{-1}b)_s,  t=0,1,\cdots, r_{(U^{-1}b)_s}^J-1\}=\mathrm{span}\{N_s^t (U^{-1}b)_s,  t=0,1,\cdots \}$, and $\langle \hat u_{s,i}, N_s^t (U^{-1}b)_s\rangle=\langle {N_s^{*}}^t\hat u_{s,i}, (U^{-1}b)_s\rangle$, claim 2 is followed by the fact that $$ \mathrm{span}\{{N_s^{*}}^t \hat u_{s,i},  t=0,1,\cdots\}=\mathrm{span}\{{N_s^{*}}^t \hat u_{s,i},  t=0,1,m_{s,i}-1\},$$ where $m_{s,i}=r_{\hat u_{s,i}}^{N_s^*}$ is the least integer $m$ such that 
${N_s^*}^{m-1}(U^{-1}b)_s\neq 0$ but ${N_s^*}^{m}(U^{-1}b)_s=0$. 

\item Claim 3: the equations in \eqref{minimalEquationhere} are equivalent to 
$$ \langle P^*(\lambda_s;A)e_i,A^t b\rangle=0,t=0,1,\cdots$$

Note that 
\begin{align*}
\langle \hat u_{s,i},J_s^t (U^{-1}b)_s\rangle &= \langle P(\lambda_s;J)U^*e_i,J_s^t (U^{-1}b)_s\rangle\\
&=\langle e_i,UP(\lambda_s;J)J_s^t (U^{-1}b)_s\rangle=\langle e_i,UP(\lambda_s;J)J^t P(\lambda_s;J)U^{-1}b \rangle\\
&=\langle e_i,UP(\lambda_s;J)J^t U^{-1}b \rangle =\langle e_i,UP(\lambda_s;J)U^{-1}UJ^t U^{-1}b\rangle\\
&=\langle e_i, P(\lambda_s;A)A^tb\rangle= \langle P^*(\lambda_s;A)e_i, A^tb\rangle,
\end{align*} where we used the facts that $P(\lambda_s;J)^2=P(\lambda_s;J)$, $P(\lambda_s;J)J=JP(\lambda_s;J)$ and $P(\lambda_s;A)=UP(\lambda_s;J)U^{-1}$. Then the conclusion follows in a similar way as in the proof of Claim 2. 
\end{itemize} 
\end{proof}

In Theorem \ref{dyn-sampling}, we  view $\Omega$  as a set of functionals $\{e_i: i\in \Omega\}$ in the dual space.  Part (2) characterizes the recoverability of the eigenvalue $\lambda_s$ by checking the local orthogonality of  the vector $(U^{-1}b)_s$ with the Krylov subspace generated by   $N_s^*$ and  $\{(U^*e_i)_s: i \in \Omega\}$.  Part (3) provides an equivalent geometric characterization on the global Krylov subspace generated by the  trajectory of  the system and the projection of the vectors $\{U^*e_i: i \in \Omega\}$ onto the invariant subspace $E_s$. In this way, we find the interplay between $A, b$ and $\Omega$ that allows the recovery of the eigenvalue $\lambda_s$.

\begin{remark} Theorem \ref{dyn-sampling} also holds for a diagonalizable matrix $A$. Suppose that $A=UDU^{-1}$ where $D$ is a  diagonal matrix as in $\eqref{D1}$. By setting $N_s=0$, the part (2) in this case can be simplified as 
$$(U^{-1}b)_s\perp (U^*e_i)_s$$ for all $i\in \Omega$, where $(U^{-1}b)_s=P(\lambda_s; D)U^{-1}b$, and $(U^*e_i)_s= P(\lambda_s; D)U^*e_i$. This condition was proved in Theorem 3.7 in \cite{aldroubi2014krylov}.  In particular, when 
$P(\lambda_s;D)$ is a rank one projection, $\lambda_s$ is recoverable if and only if  $(U^{-1}b)_s(U^*e_i)_s \neq 0$ for some $i \in \Omega$, which was listed as a sufficient condition in Theorem 2.1 in  \cite{peter2013generalized} for  $\Omega=\{e_i\}$. 
 \end{remark}

\paragraph{Universal constructions of $\Omega$}
From Theorem \ref{dyn-sampling}, we know that if $(U^{-1}b)_s=P(\lambda_s; J)U^{-1}b=0$, then $\lambda_s$ can not be recovered no matter what choice of $\Omega$ is. However, the set consisting of this kind of initial conditions is of measure zero. Let us   consider a generic set of $\mathbb{C}^d$ defined by $\mathcal{S}=\{b\in \mathbb{C}^d: (U^{-1}b)_s \neq 0 \text{ for } s=1,\cdots,n\}$. If the initial state $b$ is sampled from a non-degenerate probability measure on $\mathbb{C}^d$, then $b\in \mathcal{S}$ almost surely.  One may ask: how to choose $\Omega$ such that all eigenvalues of $A$ can be recovered from partial observations of a single trajectory starting from $b_0 \in \mathcal{S}$. This question yields the following definition. 

\begin{definition}[Universal sampling set]
A set $\Omega$ is said to be universal for the system \eqref{HDS} if all eigenvalues of $A$ can be recovered from the partial observation of the trajectory data $S_{\Omega}x_t$ for $t=0,1\cdots,2r_{S_{\Omega},b}^A-1$  with $x_0=b\in \mathcal{S}$. 

\end{definition}

 Based on  Theorem \ref{dyn-sampling},  we derive various characterizations  on universal sampling sets $\Omega$ that can guide their constructions. 

\begin{theorem} \label{universal}  Assume that the evolution matrix $A \in \mathbb{C}^{d\times d}$ and its Jordan decomposition can be written as $A=UJU^{-1}$ where $J$ is a Jordan matrix as in \eqref{Jordan1}.  The following statements are equivalent
\begin{itemize}
\item[(1)] $\Omega$ is universal for the system \eqref{HDS}. 
\item[(2)] The set of vectors $\{e_i,A^*e_i,\cdots, (A^*)^{r_{e_i}^A-1}e_i:i\in \Omega\}$ span $\mathbb{C}^d$. In other words, $\sum_{i\in \Omega}\mathcal{K}_{\infty}(A^*;e_i)=\mathbb{C}^d.$
\item[(3)]The set of vectors $\{P_s U^*e_i,  i \in \Omega\}$ spans  $\mathrm{Range}( P_s)$ for $s=1,\cdots,n$, where $ P_J = \{P_s: s = 1, \cdots, n\}$ is the penthouse family for $J$ introduced in Definition \ref {defJord}, in other words,  $P_s$ is the orthogonal projection onto  the span of cyclic vectors corresponding to the  nilpotent block $N_s^*$. 
\end{itemize}
\end{theorem}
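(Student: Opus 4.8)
The plan is to establish the equivalences by chaining through Theorem \ref{dyn-sampling}, which already gives me an eigenvalue-by-eigenvalue criterion for recoverability. Since $\Omega$ is universal precisely when every $\lambda_s$ is recoverable for every initial state $b \in \mathcal{S}$, I will first quantify over $b \in \mathcal{S}$ in condition (3) of Theorem \ref{dyn-sampling}. Fix $s$; the failure condition ``$\lambda_s$ is a root of $q_{S_\Omega,b}^A$'' holds iff $(U^{-1}b)_s \not\perp \mathcal{K}_\infty(N_s^*, (U^*e_i)_s)$ for some $i \in \Omega$, equivalently iff $(U^{-1}b)_s$ has nonzero component in $\sum_{i \in \Omega}\mathcal{K}_\infty(N_s^*,(U^*e_i)_s)$. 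For $\lambda_s$ to be recoverable for \emph{all} $b \in \mathcal{S}$, I need this to happen for every $b$ with $(U^{-1}b)_s \neq 0$; since $(U^{-1}b)_s$ ranges over all of $E_s \setminus \{0\}$ as $b$ ranges over $\mathcal{S}$ (the other blocks are free), this forces $\sum_{i\in\Omega}\mathcal{K}_\infty(N_s^*,(U^*e_i)_s) = E_s$ for each $s$. So (1) is equivalent to: for every $s$, the Krylov spaces of $N_s^*$ generated by $\{(U^*e_i)_s : i \in \Omega\}$ span $E_s$.

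Next I would translate that block-wise Krylov condition into the two stated equivalent forms. For (1) $\Leftrightarrow$ (2): since $\mathbb{C}^d = \oplus_s E_s$ is a decomposition into $J$-invariant (hence $J^*$-invariant) subspaces, and $P(\lambda_s;J)$ commutes with $J$, spanning $\mathbb{C}^d$ by $\{(J^*)^t U^* e_i\}$ is equivalent to spanning each $E_s$ by the projected vectors $\{P(\lambda_s;J)(J^*)^t U^* e_i = (J_s^*)^t (U^*e_i)_s\}$. Because $J_s^* = \bar\lambda_s I + N_s^*$ and $\bar\lambda_s I$ acts as a scalar, the span of $\{(J_s^*)^t (U^*e_i)_s : t \geq 0\}$ equals $\mathcal{K}_\infty(N_s^*, (U^*e_i)_s)$ — this is the standard fact that shifting the generator of a Krylov space by a scalar multiple of the identity does not change the space. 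Finally I would apply $U^*$ (invertible) to pass between $\{(A^*)^t e_i\}$ spanning $\mathbb{C}^d$ and $\{(J^*)^t U^*e_i\}$ spanning $\mathbb{C}^d$, using $A^* = (U^{-1})^* J^* U^* = (U^*)^{-1} J^* U^*$; this is exactly condition (2). The truncation of the Krylov order at $r_{e_i}^A - 1$ is automatic since the Krylov space stabilizes there.

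For (2) $\Leftrightarrow$ (3): I need to show that $\sum_{i\in\Omega}\mathcal{K}_\infty(N_s^*, (U^*e_i)_s) = E_s$ is equivalent to $\{P_s U^*e_i : i \in \Omega\}$ spanning $\mathrm{Range}(P_s) = W_s$. The key structural fact is that $N_s$ is a direct sum of cyclic nilpotent blocks $N_{s_i}$, so $N_s^*$ is a direct sum of their adjoints, each of which is cyclic with cyclic vector $e_{k_i^s}$ (the ``penthouse'' generators), and $W_s$ is the span of these cyclic vectors. A vector $v \in E_s$ generates (under $N_s^*$) a Krylov space whose ``top'' — its image under the appropriate power, or equivalently its projection onto the socle/cyclic-vector space $W_s$ — is $P_s v$; and $\sum_i \mathcal{K}_\infty(N_s^*, v_i) = E_s$ iff the $v_i$ collectively ``reach the top of every block,'' i.e. iff $\{P_s v_i\}$ spans $W_s$. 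Concretely: $\mathcal{K}_\infty(N_s^*, v) \supseteq$ the subspace needed to fill block $j$ iff $P_{s_j} v \neq 0$ (the component of $v$ hitting the cyclic vector of block $j$ is nonzero), and a sum of singly-generated Krylov spaces is all of $E_s$ iff for each block the generators' components on that block's cyclic vector are not all zero — and more precisely their span on $W_s$ is all of $W_s$. I will need to argue this carefully, likely by dualizing or by a dimension/induction argument on the block structure. I expect \textbf{this last equivalence to be the main obstacle}: the first two steps are clean consequences of invariant-subspace decomposition and change of basis, but pinning down precisely why ``the projected generators span $W_s$'' is equivalent to ``the Krylov spaces span $E_s$'' requires a genuine lemma about sums of cyclic subspaces of a nilpotent operator, and getting the quantifiers and the role of $P_s$ (orthogonal projection onto the cyclic-vector span, not onto the socle or anything else) exactly right is the delicate part. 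I would isolate that as a separate lemma about a single nilpotent operator acting on $E_s$, prove it there, and then assemble the three equivalences.
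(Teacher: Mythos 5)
Your proposal is correct and follows essentially the same route as the paper: quantify the eigenvalue-by-eigenvalue criterion of Theorem \ref{dyn-sampling} over all $b\in\mathcal{S}$ to reduce universality to $\sum_{i\in\Omega}P(\lambda_s^*;J^*)\mathcal{K}_{\infty}(J^*;U^*e_i)=E_s$ for each $s$, then pass to $A^*$ via the invertible change of basis $U^*$. The only difference is at the step you flag as the main obstacle, the equivalence of (2) and (3): the paper does not prove the cyclic-subspace lemma you describe but simply invokes Theorem \ref{tadcul} (Theorem 2.6 of \cite{aldroubi2017dynamical}), so your plan to establish it as a standalone lemma about a single nilpotent operator would be reproving a known result, albeit with the correct intuition.
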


\begin{proof}  Note that $A^*=(U^{*})^{-1}J^*U^*$, we have that 
$$\sum_{i\in \Omega}\mathcal{K}_{\infty}(A^*;e_i)=\mathbb{C}^d \iff \sum_{i\in \Omega}\mathcal{K}_{\infty}(J^*;U^*e_i)=\mathbb{C}^d,$$ which is also equivalent to 
$$\sum_{i\in \Omega}P(\lambda_s^*;J^*)\mathcal{K}_{\infty}(J^*;U^*e_i)=E_s:=\mathrm{Range}(P(\lambda_s^*;J^*)), s=1,\cdots,n.$$
The first equivalence is due to the invertibility of $U$ and the second equivalence is due to the facts that $\sum_{s=1}^{n}P(\lambda_s^*;J^*)=I$ and the projections are mutual orthogonal.

 The equivalence between $(1)$ and  $(2)$ can then  be proved using part (3) of Theorem  \ref{dyn-sampling}. Note that

\begin{align}
\langle P^*(\lambda_s;A)e_i,  A^tb \rangle &=\langle P(\lambda_s^*;A^*)e_i,  A^tb \rangle \nonumber\\
 &=\langle P(\lambda_s^*;A^*)(A^*)^te_i,  b \rangle\nonumber\\
&=\langle (U^*)^{-1}P(\lambda_s^*;J^*)U^*(U^*)^{-1}(J^*)^tU^*e_i , b \rangle\nonumber\\
&=\langle P(\lambda_s^*;J^*)(J^*)^tU^*e_i , U^{-1}b \rangle\nonumber\\
&=\langle P(\lambda_s^*;J^*)(J^*)^tU^*e_i ,  P(\lambda_s^*;J^*)U^{-1}b \rangle, \label{innerp}
\end{align} where we used the facts that $P(\lambda_s^*;A^*)=P^*(\lambda_s;A)$, $P(\lambda_s^*;A^*)A^*=A^*P(\lambda_s^*;A^*)$, and $P(\lambda_s^*;J^*)=P(\lambda_s;J).$

$(2)\implies (1):$ for any $b \in \mathcal{S}$, and any $s \in [n]$, we have that $P(\lambda_s;J)U^{-1}b=P(\lambda_s^*;J^*)U^{-1}b \neq 0$.  From \eqref{innerp}, 
we see that it is impossible to have
$$\langle P^*(\lambda_s;A)e_i,  A^tb \rangle=0 \text{ for  all } i \in \Omega, $$
since $\sum_{i\in \Omega}P(\lambda_s^*;J^*)\mathcal{K}_{\infty}(J^*;U^*e_i)=E_s$ and $P(\lambda_s^*;J^*)U^{-1}b$ is a nonzero vector in $ E_s$. According to part $(c)$ of Theorem \ref{dyn-sampling}, $\lambda_s$ is a root of $q_{S_{\Omega},b}^A$. Therefore, all eigenvalues of $A$ can be recovered. 
 
 $(1)\implies (2):$ if there exists a $s\in [n]$, such that $\sum_{i\in \Omega}P(\lambda_s^*;J^*)\mathcal{K}_{\infty}(J^*;U^*e_i)$ is a proper subspace of $E_s$.  Let the nonzero vector $f_s \in E_s$ be orthogonal to  $\sum_{i\in \Omega}P(\lambda_s^*;J^*)\mathcal{K}_{\infty}(J^*;U^*e_i)$. Then for any $b\in \mathcal{S}$, the vector $\tilde{b}=b-UP(\lambda_s;J)b+Uf_s \in \mathcal{S}$, and we have
 $$\langle P^*(\lambda_s;A)e_i,  A^t\tilde b \rangle=0 \text{ for  all } i \in \Omega, $$
 which yields a contradiction for the universality of $\Omega$. 
 
 Now we prove the equivalence of $(2)$ and $(3)$. We use Theorem \ref{tadcul} in the appendix, which is a slight modification of the theorem 2.6 in \cite{aldroubi2017dynamical}. 
\end{proof}

Part (2) of Theorem \ref{universal} relates the universal construction of $\Omega$ to the trajectory behaviour of a conjugate dynamical system to \eqref{HDS}: consider trajectories starting with initial conditions  $\{e_i: i\in \Omega\}$, they should not belong to any proper subspace.  Part (3) of Theorem \ref{universal} indicates that the universal construction only depends on the invertibility of certain submatrices of $U$. It guides us to construct universal sampling set according to the spectral structure of  $A$.  For simplicity, we  present an example for a diagonalizable matrix. 

 If  $A=UDU^{-1}$ where $D$ is a  diagonal matrix as in \eqref{D1}, then the part (3) can be simplified as 
$\{P(\lambda_s;D)U^*e_i,  i \in \Omega\} $ spans  $\mathrm{Range}( P(\lambda_s;D))=E_s$. That is to say, the submatrix of $U^*$ formed by selecting column indices according to $\Omega$ and row indices according to $E_s$ needs to have full row rank. If we know $\mathrm{dim}(E_s)\equiv 1$ for $s=1,\cdots,d$, then it suffices to finding a column of $U^*$ which is nonzero everywhere.  This example can be immediate to generalize to the Jordan case where there is only one cyclic vector for each nilpotent block. Below, we present a more complicated example.


\begin{example}\label{circlegraph}[Diffusion process over circulant graphs.] In this system, the dynamics is generated by a circulant matrix $A$.    It is well-known that $A$ admits the spectral decomposition $A=UDU^{-1}$, where $U$ is the discrete Fourier matrix and  $D$ is a diagonal matrix, with eigenvalues $\lambda_1,\cdots, \lambda_n$. Note that in this case, $D$  is similar to the standard diagonal form in \eqref{D1} up to  a  permutation of the diagonal entries.  Let $E_s:=\mathrm{Range}(P(\lambda_s;D))$ and $s_{max}:=\max_{s=1,\cdots,n}{\mathrm{dim}(E_s)}$. Let us choose $\Omega=\{1,\cdots, s_{max}\}$,  and denote by $(U^*)_{\Omega}$ the submatrix of $U^*$  formed by choosing columns according to $\Omega$.  Then the matrix $(U^*)_{\Omega}$ satisfies the full spark property:  any $s_{max}$ rows of  $(U^*)_{\Omega}$  will form a Vandermonde matrix with distinct nodes, and therefore is invertible.  As a result, $\{P(\lambda_s;D)U^*e_i,  i \in \Omega\} $ spans  $E_s$ for $s=1\cdots,n$ and $\Omega$ is a universal construction. 

\end{example}

Example \ref{circlegraph} reveals a connection of universal sampling sets with full-spark submatrices of $U^*$, and the latter problem  has been extensively studied in the field of compressed sensing. For discrete Fourier matrix, Chebotar$\ddot{e}$v  showed that every square submatrix of  the discrete Fourier matrices is invertible if the dimension $d$ is prime \cite{stevenhagen1996chebotarev}. In this case,  an arbitrary set $\Omega \subset [d]$ with $|\Omega|=s_{\max}$ is universal.  In general, the deterministic construction is very difficult. As is done in compressed sensing, one can look for the random constructions of $\Omega$  at uniform such that any $s_{max}$ rows of  $(U^*)_{\Omega}$  satisfies the restricted isometry property.  It is shown that the cardinality of $\Omega$ which will be slightly more than $s_{\max}$, with proportional factor mainly depending on the coherence of the columns $U^*$, a quantity measuring the dependence.  We refer the readers \cite{candes2006robust,tang2017universal} for more details.

 \subsubsection{Case $c\neq 0$.}
In this subsection, we deal with the  affine systems with $c\neq 0$. Inspired by the idea in \cite{duan2020identification},  we show that the affine  systems can be transformed to a homogeneous system by  a linear transformation.  In this way, we are able to extend the previous results to the affine systems. For the sake of the conciseness, we only state the generalization of Theorem \ref{dyn-sampling}. Other results in section \ref{homogeneous} can be derived similarly. 

Now let us consider  the affine  system: 
$$x_{t+1}=Ax_{t}+c, x_0=b, t=0,1,\cdots,$$

Using the solution formula \eqref{sol1}, we have $x_{t+1}-x_t=A^t((A-I)b+c)$. Now we denote $y_t=x_{t+1}-x_t$, then we have
\begin{align}
y_t&= A^ty_0, t=0,1,2,\cdots \label{homo}\\
y_0&=(A-I)b+c; 
\end{align}
  
  Now we apply Theorem \ref{dyn-sampling} to the homogeneous system \eqref{homo}, we obtain the following theorem:
  \begin{theorem}\label{theorem0}
     Assume that the evolution matrix $A \in \mathbb{C}^{d\times d}$ and its Jordan decomposition can be written as $A=UJU^{-1}$ where $J$ is a Jordan matrix as in \eqref{Jordan1}. Let $b \in \mathbb{C}^d$ be the initial state and $c$ be the force term of the system. Define the vector $w=(A-I)b+c$.  Then  the polynomial $q_{S_{\Omega},w}^A$ can be  uniquely determined from $\{S_{\Omega}x_t: t=0,1,\cdots\}$. In addition, the following statements are equivalent: for each $s \in [n]$
\begin{itemize}
\item[(1)] $\lambda_s$ is \textbf{not} a root of $q_{S_{\Omega},w}^A$. 
\item[(2)]$w_s\perp \mathcal{K}_{\infty}(N_s^{\ast},\hat u_{s,i})$ for all $i\in \Omega$ where $w_s=P(\lambda_s; J)U^{-1}w$, $\hat u_{s,i} = P(\lambda_s; J)U^*e_i$. 
\item[(3)] $P^*(\lambda_s;A)e_i \perp \mathcal{K}_{\infty}(A;w) $ for all $i\in \Omega$. 

\end{itemize}
\end{theorem}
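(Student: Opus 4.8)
The plan is to reduce Theorem~\ref{theorem0} to Theorem~\ref{dyn-sampling} through the difference transformation recorded just above the statement, and then to check that this reduction preserves exactly the information contained in the observed data.

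First I would set $y_t := x_{t+1}-x_t$ and use the solution formula~\eqref{sol1} to write $y_t = A^t\big((A-I)b+c\big) = A^t w$, so that $\{y_t\}_{t\ge 0}$ is precisely the trajectory of the homogeneous system~\eqref{homo} with initial state $w$. On the data side, the two trajectories are linked by $S_\Omega y_t = S_\Omega x_{t+1}-S_\Omega x_t$, so the sequence $\{S_\Omega y_t:t=0,1,\dots\}$ is a deterministic function of $\{S_\Omega x_t:t=0,1,\dots\}$. Concretely, the samples $S_\Omega x_t$ for $t=0,\dots,2r_{S_\Omega,w}^A$ yield $S_\Omega y_t$ for $t=0,\dots,2r_{S_\Omega,w}^A-1$, which is exactly the input needed to run Proposition~\ref{Prony} on~\eqref{homo}.

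Next I would apply Theorem~\ref{dyn-sampling} verbatim to the homogeneous system~\eqref{homo}, i.e.\ with the pair $(A,b)$ replaced by $(A,w)$. This gives at once that $q_{S_\Omega,w}^A$ is uniquely determined from $\{S_\Omega y_t\}$, hence from $\{S_\Omega x_t\}$, and that statements~(1)--(3) of Theorem~\ref{theorem0} are equivalent, since~(2) and~(3) are literally conditions~(2) and~(3) of Theorem~\ref{dyn-sampling} with $U^{-1}b$ replaced by $U^{-1}w$ and $\mathcal K_\infty(A,b)$ replaced by $\mathcal K_\infty(A,w)$. For the uniqueness clause in the stronger (observational-equivalence) sense: if $S_\Omega x_t = S_\Omega\tilde x_t$ for all $t$, where $\tilde x_t$ is the trajectory of an affine system with data $(\tilde A,\tilde b,\tilde c)$, then $S_\Omega\tilde y_t = S_\Omega\tilde x_{t+1}-S_\Omega\tilde x_t = S_\Omega y_t$ for all $t$, and the uniqueness part of Theorem~\ref{dyn-sampling} for homogeneous systems forces $q_{S_\Omega,w}^A = q_{S_\Omega,\tilde w}^{\tilde A}$ with $\tilde w=(\tilde A-I)\tilde b+\tilde c$.

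The argument is mostly bookkeeping once the difference trick is in place; the one point that needs care is that the reduction neither creates nor consumes observations in a hidden way — it costs exactly one extra time sample $S_\Omega x_{2r_{S_\Omega,w}^A}$ relative to the homogeneous case — and that the degree governing the Hankel system in Proposition~\ref{Prony} is $r_{S_\Omega,w}^A$, associated with $w$, not $r_{S_\Omega,b}^A$. It is also worth noting that recoverability must be phrased in terms of $w$ rather than $b$: since $(A-I)$ acts nilpotently on the generalized eigenspace of the eigenvalue $1$ (when $1\in\{\lambda_1,\dots,\lambda_n\}$), the difference operator can partially annihilate that component, which is exactly what the conditions on $w$ above encode.
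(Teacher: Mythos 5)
Your proposal is correct and follows exactly the paper's route: the paper likewise introduces the difference trajectory $y_t = x_{t+1}-x_t = A^t w$ with $w=(A-I)b+c$ and then invokes Theorem~\ref{dyn-sampling} for the resulting homogeneous system. Your added bookkeeping (the one extra time sample, the degree being $r_{S_\Omega,w}^A$ rather than $r_{S_\Omega,b}^A$, and the observation about $(A-I)$ acting nilpotently on the generalized eigenspace of $\lambda=1$) is accurate and fills in details the paper leaves implicit.
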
      

Now we compare Theorem \ref{theorem0} with Theorem \ref{dyn-sampling}. The only difference is to replace the vector $b$ in Theorem 1 with  $w$.  Note that $w=(A-I)b+c$, so the nonzero external vector $c$ change the geometry of vectors and Krylov spaces. As a result, for  $\Omega\subset [d]$, $\lambda_s$ could  be recovered in the affine system but  may not in its homogeneous counterpart. In particular, when $A$ is the identity matrix, the initial condition $b$ plays no role in determining the recoverability of the eigenvalues in affine systems. This can also be seen from the solutions to the systems:  the initial condition $b$ just add a simple translation on the solutions. However, universal sampling sets for both affine systems and its homogeneous counterpart are the same, since it only depends on the spectral matrix $U$ by  Theorem \ref{universal}.
\subsection{Extension to continuous-time affine dynamical systems} 
In this section, we consider the continuous-time systems:
\begin{align*}
\dot x(t) &=Ax(t)+c, t\geq 0 \\
x(0)&=b;
\end{align*}

Suppose that  observation time instances are uniform, without loss of generality, say at $t=0,1, 2,\cdots$, we define 
$B=e^{A}$, and introduce  $y_t=x^{\mathrm{cont}}_{t+1}-x^{\mathrm{cont}}_t$, then we have \begin{align}\label{continuous}y_t&=B^ty_0\\ y_0&=(B-I)b+g(1;A)c,
\end{align}
 where we used the properties
 \begin{align}
g(t;A)&=tg( 1;tA),\\
(t+1)g(1;(t+1)A)-tg(1;tA)&= e^{tA}g(1;tA).
 \end{align}
 
 Therefore, we can apply the results of discrete-time systems to system \eqref{continuous}  generated by the evolution matrix $B$.  In general, the mapping $A\rightarrow e^A$ does not preserve the Jordan decomposition of $A$ except for the case when $A$ is similar to a diagonal matrix. For easy presentation, we state the  result for diagonalizable case.

   \begin{theorem}\label{theoremcont}
     Assume that the evolution matrix $A=UDU^{-1}$ where $D$ is a diagonal matrix. Let $b \in \mathbb{C}^d$ be the initial state and $c$ be force term of the system. Define the matrix $B=e^A$ and the vector $w=(B-I)b+g(1;A)c$.  Then  the polynomial $q_{S_{\Omega},w}^B$ can be  uniquely determined from $\{S_{\Omega}x^{\mathrm{cont}}_t: t=0,1,\cdots\}$. In addition, the following statements are equivalent: for each $s\in [n]$
\begin{itemize}
\item[(a)] $e^{\lambda_s}$ is \textbf{not} a root of $q_{S_{\Omega},w}^B$. 
\item[(b)]$w_s\perp\mathrm{span}\{\hat u_{s,i},i\in \Omega\}$,  where the vector $$w_s=\begin{cases}P(\lambda_s; D)U^{-1}(b+\frac{1}{\lambda_s}c), \text{ if } \lambda_s \neq 0 \\P(\lambda_s; D)U^{-1}c, \text{otherwise.}  \end{cases}$$ and the vector $\hat u_{s,i} = P(\lambda_s; D)U^*e_i$. 
\item[(c)] $P^*(\lambda_s;B)e_i \perp \mathcal{K}_{\infty}(B;w)$ for all $i\in \Omega$. 

\end{itemize}

    \end{theorem}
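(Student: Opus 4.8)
The plan is to reduce the continuous-time problem to the discrete homogeneous setting already covered by Theorem~\ref{dyn-sampling} and then push the resulting conditions through the matrix exponential. The first step is the differencing displayed above: with $B = e^A$ and $y_t := x^{\mathrm{cont}}_{t+1} - x^{\mathrm{cont}}_t$, the solution formula gives $y_t = B^t y_0$ and $y_0 = (B-I)b + g(1;A)c = w$. The observations $\{S_\Omega x^{\mathrm{cont}}_t : t = 0,1,\dots\}$ determine $S_\Omega y_t = S_\Omega x^{\mathrm{cont}}_{t+1} - S_\Omega x^{\mathrm{cont}}_t$ for all $t$, so any two continuous systems producing the same samples produce the same $S_\Omega$-trajectory of their associated discrete homogeneous systems. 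Applying the uniqueness part of Theorem~\ref{dyn-sampling} to the pair $(B,w)$ then yields at once that $q^B_{S_\Omega,w}$ is uniquely determined from the data.

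Since $A = UDU^{-1}$, the matrix $B = e^A = U e^D U^{-1}$ is diagonalizable with the \emph{same} eigenvector matrix $U$ and eigenvalues $e^{\lambda_1},\dots,e^{\lambda_n}$. Assuming, as in the non-aliasing regime, that these exponentials are distinct (and that $e^{\lambda_s}-1\neq 0$ whenever $\lambda_s\neq 0$), the spectral projection of $B$ onto the $e^{\lambda_s}$-eigenspace is exactly $U P(\lambda_s;D)U^{-1} = P(\lambda_s;A)$; without distinctness the relevant projections must be grouped over fibers of $\lambda\mapsto e^\lambda$, which I would note but not pursue. Under this identification, the diagonalizable case of Theorem~\ref{dyn-sampling} (the Remark following it, with all nilpotent parts $N_s = 0$) applied to $(B, w, \Omega)$ gives directly the equivalence of statement (a) with ``$(U^{-1}w)_s \perp \mathrm{span}\{\hat u_{s,i} : i\in \Omega\}$'' (writing $(U^{-1}w)_s := P(\lambda_s;D)U^{-1}w$) and with statement (c), $P^*(\lambda_s;B)e_i \perp \mathcal{K}_\infty(B;w)$ for all $i \in \Omega$. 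This settles (a)$\Leftrightarrow$(c) and reduces (a)$\Leftrightarrow$(b) to comparing $(U^{-1}w)_s$ with the vector $w_s$ of the statement.

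The remaining work, and the only genuinely computational point, is to project $w = (B-I)b + g(1;A)c$ onto the $\lambda_s$-block. Since $B - I = U(e^D-I)U^{-1}$ and $g(1;A) = U g(1;D)U^{-1}$, and on the $\lambda_s$-block $e^D - I$ and $g(1;D)$ act as the scalars $e^{\lambda_s}-1$ and $g(1;\lambda_s)$, one obtains
\[
(U^{-1}w)_s = (e^{\lambda_s}-1)\,P(\lambda_s;D)U^{-1}b + g(1;\lambda_s)\,P(\lambda_s;D)U^{-1}c .
\]
For $\lambda_s \neq 0$ one uses $g(1;\lambda_s) = (e^{\lambda_s}-1)/\lambda_s$ to rewrite this as $(e^{\lambda_s}-1)\,P(\lambda_s;D)U^{-1}(b + \tfrac1{\lambda_s}c) = (e^{\lambda_s}-1)\,w_s$; for $\lambda_s = 0$ one uses the power-series value $g(1;0) = 1$ to get $(U^{-1}w)_s = P(\lambda_s;D)U^{-1}c = w_s$. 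In either case $(U^{-1}w)_s$ is a nonzero scalar multiple of $w_s$ (literally equal when $\lambda_s = 0$), so orthogonality to $\mathrm{span}\{\hat u_{s,i}: i\in\Omega\}$ is the same for both, which is precisely statement (b). I expect the main obstacle to be exactly this $\lambda_s = 0$ versus $\lambda_s \neq 0$ dichotomy — the scalar $(e^{\lambda_s}-1)/\lambda_s$ is a removable singularity and one must fall back on $g(1;0)=1$ — together with the bookkeeping needed to convert ``$e^{\lambda_s}$ is a root of $q^B_{S_\Omega,w}$'' into ``$\lambda_s$ is recoverable for $A$'', which is where the non-aliasing hypothesis on the spectrum enters; everything else is a transcription of Theorem~\ref{dyn-sampling} through the conjugations $A = UDU^{-1}$ and $B = e^A$.
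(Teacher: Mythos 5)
Your proposal is correct and follows exactly the route the paper intends: difference the samples to get the discrete homogeneous system $y_t=B^ty_0$ with $y_0=w$, apply the diagonalizable case of Theorem~\ref{dyn-sampling} to $(B,w)$, and compute $(U^{-1}w)_s=(e^{\lambda_s}-1)w_s$ (resp.\ $w_s$ when $\lambda_s=0$) via $g(1;\lambda_s)=(e^{\lambda_s}-1)/\lambda_s$. In fact you supply more detail than the paper, which states the theorem without proof after the reduction to $B=e^A$; your explicit flagging of the non-aliasing assumptions ($e^{\lambda_s}$ distinct and $e^{\lambda_s}\neq 1$ for $\lambda_s\neq 0$) is a caveat the paper leaves implicit.
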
  
    
  Furthermore, if $A$ has a real eigenvalue $\lambda$, then one can recover $\lambda$ from $e^{\lambda}$. However, this is not true for complex eigenvalues, 
    since $e^z$ is not a injective mapping on the complex domain $\mathbb{C}$.

\section{Numerical Algorithms}\label{sec:NAlg}
In this section, we propose various algorithms to recover the eigenvalues of $A$ from data. Recall that Proposition \ref{Prony} in fact provides us with a Prony-type algorithm to reconstruct the annihilating polynomial $q_{S_{\Omega},b}^A$. The original Prony method  was proposed  centuries ago to recover a vector with an $s$-sparse Fourier transform from $2s$ of its consecutive components. In recent years, the Prony method has been widely applied to different inverse problems including  the approximation of Green functions in fluid dynamics \cite{beylkin2007multiresolution}, the inverse scattering problem \cite{hanke2012one},  the parameter estimation in signal processing \cite{bossmann2012sparse}. The renaissance of Prony method originates from its stabilized variants , such as the ESPRIT method \cite{roy1986esprit}, the matrix pencil method \cite{hua1990matrix} or the approximate Prony method \cite{potts2010parameter,Prony-TLS1,Prony-TLS2}.  Error
estimates for the performance of Prony-type methods with noisy measurements  can be found in \cite{batenkov2013accuracy,potts2011nonlinear,filbir2012problem,tang2017system,beinert2021phase}. The robustness depends on the minimal gap between parameters.

To apply Prony-type methods, we first construct Hankel-like matrix. Let $\Omega \subset [d]$ and given the trajectory data $\{S_{\Omega}A^tb, t=0,\cdots, M-1\}$, the rectangular Hankel-like matrices for some positive integer $L<M$ is given by
\begin{equation}\label{bighankel}
\begin{split}
    &{H}_{\Omega, M-L,L+1}=\begin{bmatrix}\vline&\vline&&\vline\\ {h}_0&{h}_1&\cdots&{h}_L\\ \vline&\vline&&\vline\end{bmatrix},\\
    &{H}_{\Omega, M-L,L}(t) = H_{\Omega, M-L,L+1}(:,t+1:t+L),t=0,1,
\end{split}
\end{equation}
where the  $(M-L)|\Omega|\times 1$ column vector is
\begin{equation}\label{colhankel}
    {h}_l = \big[S_{\Omega}A^{l}b, S_{\Omega}A^{l+1}b,\cdots, S_{\Omega}A^{M-L+l-1}b\big]^T,
\end{equation} for $l=0,1,\cdots,L$.  Denote by $r=r_{S_{\Omega},b}^A$, the degree of minimal $(S_{\Omega},A,b)$ annihilator $q_{S_{\Omega},b}^A$. As long as  $M \geq L+ r$ and $L\geq r$, we have that  $\mathrm{rank}({H}_{M-L,L})=r$ and we can recover all $\Omega$-recoverable eigenvalues. A practical choice of $L$ is $L\lesssim M/2$ (and $L\leq N/2$), which is the maximum value for $L$.  In particular, when $L=r$, the Hankel-like matrix is full column rank.


\paragraph{Determining the number of $\Omega$-recoverable eigenvalues.}  In applications, the degree $r$  of the annihilator $q_{S_{\Omega},b}^A$ is sometimes not known as a \textit{priori}. The information of $r$ is crucial in applying the Prony-typed method proposed in Proposition \ref{Prony}.  

\begin{alg}\label{getr}Determining the number of $\Omega$-recoverable eigenvalues.\\
    \textbf{Input:} $M\in\mathbb{N}$ s.t. $M>3d$, observed samples $\{S_{\Omega}A^tb, t=0,\cdots, M-1\}$.\\
    \indent \textbf{Step 1. Construct Hankel like matrix.} Construct $H_{\Omega, M-d, d+1}$ from \eqref{bighankel} and compute its singular values in descending order by taking SVD as $H_{\Omega, M-d, d+1}=U_{\Omega, M-d}D_{\Omega, M-d,d+1}W_{\Omega, d+1}^*$. \\
    \indent \textbf{Step 2. Detemine the numerical rank.} We have different methods to determine the numerical rank including: (i) Count the maximum number of singular values $r_1$ such that $\sigma_i\in\mathbb{R}~,i=1,2,\dots,r$ are greater than a given positive threshold $\epsilon$. (ii) Compute the quotient of singular values as $q_r = \frac{\sigma_r}{\sigma_{r+1}}$ and find the maximum quotient index $r_2=\max\limits_rq_r$. (iii) Reorder the quotient $q_r$ in descending order as $\hat q_r$, find the maximum gap $\hat q_r-\hat q_{r+1}$, and determine the original order of $\hat q_r$ in quotients $\{q_r\}$ as $r_3$.\\
    \textbf{Output:} The number of recoverable eigenvalues  $r_1$, $ r_2$, $ r_3$ produced by three criterions.
\end{alg}
We can estimate $r$ by calculating the numerical rank of the Hankel matrix  $H_{\Omega,M-L,L+1}$, given sufficient large $L$ and $M$. For example,  $L=d$ and $M\geq 2d$.  We  then compute  the  SVD (Singular Value Decomposition) of  $H_{\Omega,M-L,L+1}$ and  find $r$ as the number of singular values being larger than a threshold $\epsilon$, which could be a fixed constant or the largest spectral gap among singular values, see Algorithm 1 for more details.




In the following subsections,  we will further investigate the data structure of the Hankel-like matrix ${H}_{\Omega,M-L,L+1}$ and present various  algorithms based on the ideas from classical Prony, matrix pencil method and ESPRIT estimation method. The latter two classical methods are well known for their better numerical stability than the original Prony method. 

\subsection{The Prony LS and TLS Algorithm}
The Prony-type method contained in the proof of    Proposition \ref{Prony} is summarized as follows.

\begin{alg}Prony LS and TLS Method.\\
    \textbf{Input:} $M\in\mathbb{N}$ s.t. $M>3d$, observed samples $\{S_{\Omega}A^tb, t=0,\cdots, M-1\}$ and the number of recoverable eigenvalues $r$.\\
    \indent \textbf{Step 1. Construct and solve Hankel-like matrix equation.} Construct $H_{\Omega, M-r, r+1}$ from \eqref{bighankel} and compute its SVD as $H_{\Omega, M-r, r+1}=U_{\Omega,M-r}D_{\Omega,M-r,r+1}W_{\Omega,r+1}^*$. Construct the linear equation in \eqref{eq:Prony}: $H_{\Omega,M-r,r}(0)\vec{q}=-h_{\Omega, r}$ from \eqref{bighankel} and \eqref{colhankel} and compute its Least Square solution and Total Least Square solution $\vec{q} = (q_0,q_1,\dots,q_{r-1})^\top$ as
    \begin{align*}
        \vec{q}^{~LS}&= -H_{\Omega,M-r,r}(0)^+ h_{\Omega, r+1},\\
        \vec{q}^{~TLS}&= W_{\Omega,r+1}(1:r,r+1)/W_{\Omega,r+1}(r+1,r+1).
    \end{align*}
    \vspace{-0.2in}
    \\
    \indent \textbf{Step 2. Find the roots of the polynomial.} Compute all zeros $z_j^{LS},~z_j^{TLS}\in\mathbb{C}~,j=1,2,\dots,r$ of $q(z)=\sum_{k=0}^{r-1}q_k z^k + z^r$ with $\vec{q}^{~LS}$ and $\vec{q}^{~TLS}$, respectively.\\
    \textbf{Output:} $\{z_j^{~LS}\in\mathbb{C},~j=1,2,\dots,r\}$ and $\{z_j^{~TLS}\in\mathbb{C},j=1,2,\dots,r\}$.
\end{alg}

\subsection{Generalized Matrix Pencil Method}

In this section, we show that one can generalize the idea of Matrix pencil to recover the eigenvalues of $A$.  We first present a decomposition  of  Hankel matrices ${H}_{\Omega,M-L,L}(t), t=0,1$ for the case when $\Omega$ has a single point. 

\begin{lemma} \label{mtxdecomp}Let $\Omega = \{i\} \subset [d]$. Denote by $r=r_{S_{\Omega},b}^A$ and $r_s=r_{S_{\Omega},b_s}^A$ for $s=1,\cdots,n$. Then the rectangular Hankel matrices can be factorized in the following form for $t=0,1$ : 
\begin{equation}\label{factorization2}
    {H}_{M-L,L}(t) = {V}^T_{r,M-L} {\Lambda} {\hat J}^{t} {V}_{r,L}
\end{equation}\par

\begin{equation*}
\begin{split}
   \quad {V}_{r,L} :=& 
 \begin{bmatrix}
{V}_{r_1,L}\\
 {V}_{r_2,L}\\
   \vdots\\
   \mathrm{V}_{r_n,L}
   \end{bmatrix}, V_{r_s, L}=  
  \begin{bmatrix}
   C_0^0&C_1^0\lambda_s&\dots&C_{r_s-1}^0\lambda_s^{r_s-1}&\dots&C_{L-1}^0\lambda_s^{L-1}\\
   0&C_1^1&\dots&C_{r_s-1}^1\lambda_s^{r_s-2}&\dots&C_{L-1}^1\ \lambda_s^{L-2}\\
   \vdots&\vdots&\ddots&\vdots&\ddots&\vdots\\
   0&0&\dots&C_{r_s-1}^{r_s-1} \lambda_s^0 &\dots&C_{L-1}^{r_s-1}\lambda_s^{L-r_s}\\
   \end{bmatrix}\in\mathbb{C}^{r_s\times L},
\end{split}
\end{equation*}
and a $r$-by-$r$ matrix $\Lambda$:
\begin{equation*}
    {\Lambda} :=
  \begin{bmatrix}
   \Lambda_1& 0&\cdots&0\\
   0 & \Lambda_2 &\cdots&0\\
   \vdots&\vdots&\ddots&\vdots\\
 0 & 0 & \cdots&\Lambda_n
   \end{bmatrix},
  \Lambda_s = \begin{bmatrix}\label{lambdas}
   \langle (U^{-1}b)_s, (U^*e_i)_s\rangle&\cdots&\langle N_s^{r_s-1}(U^{-1}b)_s,(U^*e_i)_s \rangle\\
   \langle N_s(U^{-1}b)_s,(U^*e_i)_s\rangle&\cdots&0\\
   \vdots&\vdots&\vdots\\
\langle N_s^{r_s-1}(U^{-1}b)_s,(U^*e_i)_s\rangle&\cdots&0 \end{bmatrix} \in \mathbb{C}^{r_s\times r_s},\\
\end{equation*}
and a Jordan matrix $\hat J$:
\begin{equation*}
   \hat{J}:=
   \begin{bmatrix}   \lambda_1 + \hat N_1&0&\cdots&0\\
    0 &\lambda_2 + \hat N_2&\cdots&0\\
   \vdots&\vdots&\ddots&\vdots\\
  0&0&\dots&\lambda_n+\hat N_n
   \end{bmatrix},\hat N_s=
   \begin{bmatrix}
   0&0&\dots&0&0\\
   1&0&\dots&0&0\\
   0&1&\ddots&0&0\\
   \vdots&\vdots&\ddots&\vdots&\vdots\\
   0&0&\dots&1&0
   \end{bmatrix} \in \mathbb{C}^{r_s\times r_s}.\\
\end{equation*}

\end{lemma}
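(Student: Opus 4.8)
The plan is to reduce the claimed factorization to a scalar identity on a single Jordan block and then reassemble everything by block-diagonality. Write $u:=U^{-1}b$ and $\hat u:=U^{*}e_{i}$, and use the orthogonal, $J$-invariant decomposition $\mathbb{C}^{d}=\bigoplus_{s}E_{s}$ to split $u=\sum_{s}u_{s}$, $\hat u=\sum_{s}\hat u_{s}$ with $u_{s}=(U^{-1}b)_{s}$, $\hat u_{s}=(U^{*}e_{i})_{s}\in E_{s}$. Since $A=UJU^{-1}$ we have $S_{\Omega}A^{m}b=\langle A^{m}b,e_{i}\rangle=\langle J^{m}u,\hat u\rangle$, and because $J|_{E_{s}}=\lambda_{s}I+N_{s}=:J_{s}$ with the $E_{s}$ mutually orthogonal, each Hankel entry becomes
\[
\big(H_{\Omega,M-L,L}(t)\big)_{a,c}=S_{\Omega}A^{a+t+c}b=\sum_{s=1}^{n}\langle J_{s}^{\,a+t+c}u_{s},\hat u_{s}\rangle ,\qquad 0\le a<M-L,\ 0\le c<L,\ t\in\{0,1\}.
\]
So it suffices to prove, for every $s$ and all integers $M_{1},M_{2}\ge 0$, the scalar identity
\[
\langle J_{s}^{\,M_{1}+M_{2}}u_{s},\hat u_{s}\rangle=\big(\hat J_{s}^{\,M_{1}}e_{0}\big)^{T}\,\Lambda_{s}\,\big(\hat J_{s}^{\,M_{2}}e_{0}\big),\qquad e_{0}:=(1,0,\dots,0)^{T}\in\mathbb{C}^{r_{s}},\ \ \hat J_{s}=\lambda_{s}I+\hat N_{s}.
\]
Granting this and taking $M_{1}=a$, $M_{2}=t+c$ (and writing $\hat J_{s}^{\,t+c}e_{0}=\hat J_{s}^{\,t}(\hat J_{s}^{\,c}e_{0})$), one only has to observe that the columns of $V_{r_{s},M-L}$ are $e_{0},\hat J_{s}e_{0},\dots,\hat J_{s}^{\,M-L-1}e_{0}$ and those of $V_{r_{s},L}$ are $e_{0},\hat J_{s}e_{0},\dots,\hat J_{s}^{\,L-1}e_{0}$: indeed $\hat N_{s}^{\,j}e_{0}=e_{j}$ for $j<r_{s}$, so the $p$-th entry of $\hat J_{s}^{\,a}e_{0}$ is $C_{a}^{p}\lambda_{s}^{\,a-p}$, which is exactly the displayed entry of $V_{r_{s},\cdot}$. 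Hence $\big(H_{\Omega,M-L,L}(t)\big)_{a,c}=\sum_{s}\big(V_{r_{s},M-L}^{T}\Lambda_{s}\hat J_{s}^{\,t}V_{r_{s},L}\big)_{a,c}$, which is the $(a,c)$ entry of $V_{r,M-L}^{T}\Lambda\hat J^{\,t}V_{r,L}$ because $\Lambda$ and $\hat J$ are block diagonal and the $V$'s are the matching vertical stackings.

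To prove the scalar identity I would set $v_{k}:=\langle N_{s}^{\,k}u_{s},\hat u_{s}\rangle$, so that $\Lambda_{s}=(v_{p+q})_{0\le p,q\le r_{s}-1}$ is precisely the Hankel matrix of the sequence $(v_{k})$. Binomial expansion of $J_{s}^{\,M}=(\lambda_{s}I+N_{s})^{M}$ gives $\langle J_{s}^{\,M}u_{s},\hat u_{s}\rangle=\sum_{k}C_{M}^{k}\lambda_{s}^{\,M-k}v_{k}$, and the same expansion of $\hat J_{s}^{\,M}$ gives $(\hat J_{s}^{\,M}e_{0})_{p}=C_{M}^{p}\lambda_{s}^{\,M-p}$. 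Substituting these into the right-hand side of the identity produces the double sum $\sum_{p,q}C_{M_{1}}^{p}C_{M_{2}}^{q}\lambda_{s}^{\,M_{1}+M_{2}-p-q}v_{p+q}$, and Vandermonde's identity $\sum_{p+q=k}C_{M_{1}}^{p}C_{M_{2}}^{q}=C_{M_{1}+M_{2}}^{k}$ collapses it back to $\sum_{k}C_{M_{1}+M_{2}}^{k}\lambda_{s}^{\,M_{1}+M_{2}-k}v_{k}=\langle J_{s}^{\,M_{1}+M_{2}}u_{s},\hat u_{s}\rangle$, as required.

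The one step needing care — the closest thing to an obstacle — is justifying the truncation at $r_{s}$: one must know that $v_{k}=0$ for all $k\ge r_{s}$, which is what makes $\Lambda_{s}$ a genuine $r_{s}\times r_{s}$ matrix and makes the finite sums above coincide with the full ones. I would obtain this from the characterization of $r_{s}=r_{S_{\Omega},b_{s}}^{A}=r_{S_{\Omega}U,u_{s}}^{J}$ used in the proof of Theorem~\ref{dyn-sampling}: for a single observation point $\Omega=\{i\}$ the defining relation of the $S_{\Omega}$-altered minimal polynomial of the block, $S_{\Omega}U\,N_{s}^{\,r_{s}}N_{s}^{\,t}u_{s}=0$ for all $t\ge 0$, reads exactly $v_{r_{s}+t}=0$ for all $t\ge 0$; minimality of $r_{s}$ moreover forces $v_{r_{s}-1}\ne 0$, so the anti-triangular shape of $\Lambda_{s}$ is as displayed. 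I would also record that $r=\sum_{s}r_{s}$, from $q_{S_{\Omega},b}^{A}=\prod_{s}(z-\lambda_{s})^{r_{s}}$ (Lemma~\ref{lemma2}), so that the stacked matrices $V_{r,L}$, $V_{r,M-L}$, $\Lambda$, $\hat J$ have the stated dimensions. Everything else is routine bookkeeping, mainly keeping the $0$-indexed and $1$-indexed conventions consistent through the Vandermonde collapse; I do not expect any genuine difficulty there.
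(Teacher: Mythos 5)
Your proposal is correct and follows essentially the same route as the paper's proof: entrywise comparison of the two sides via the binomial expansion of $J_s^{m}=(\lambda_s I+N_s)^m$ on each Jordan block together with the Vandermonde convolution $\binom{M_1+M_2}{k}=\sum_{p+q=k}\binom{M_1}{p}\binom{M_2}{q}$. You additionally make explicit two points the paper leaves implicit — that the columns of $V_{r_s,\cdot}$ are the vectors $\hat J_s^{\,c}e_0$, and that the truncation of the binomial sum at $r_s$ is justified because $\langle N_s^{\,k}u_s,\hat u_s\rangle=0$ for $k\ge r_s$ by the definition of $r_s=r^A_{S_\Omega,b_s}$ — which is a welcome tightening rather than a different method.
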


\begin{proof} We prove this lemma by the matrices in \eqref{factorization2} are entrywise identical. 
The $(m,l)$ th entry of the Hankel matrix ${H}_{\Omega,M-L,L}(0)$ is given by 
\begin{align*}\label{binomial}
     \langle A^{m+l-2}b, e_i\rangle =   \langle J^{m+l-2} (U^{-1}b), U^*e_i\rangle &=\sum_{s=1}^{n} \langle J_s^{m+l-2}(U^{-1}b)_s, (U^*e_i)s\rangle\\
        &= \sum \limits_{s=1}^n \sum \limits_{k_s = 0}^{r_s-1} {m+l-2 \choose{k_s}}\lambda_s^{m+l-2-k_s}\langle N_s^{k_s}(U^{-1}b)_s,(U^*e_i)_s\rangle.
 \end{align*}
 
 Then by  using  the identity 
 $${m+l-2 \choose{k_s}} =\sum_{i=0}^{k_s}{ m-1 \choose i}{l-1 \choose{k_s-i}},$$
 and comparing the coefficients for $\langle N_s^{i}(U^{-1}b)_s,(U^*e_i)_s\rangle$ for $i=0,\cdots,r_s-1$,  one can show that it is the same with the $(m,l)$ th entry of the matrix ${V}^T_{r,M-L} {\Lambda} {V}_{r,L}$.  Similarly, we can prove for the Hankel matrix ${H}_{\Omega,M-L,L}(1)$. 
\end{proof}


Recall that the superscripts ``$*$'' and ``$+$" will denote the conjugate transpose and the pseudoinverse. The following lemma provides a theoretical foundation for the Generalized Matrix Pencil method.

\begin{proposition}
Let $\Omega = \{i\} \subset [d]$. Denote by $r=r_{S_{\Omega},b}^A$ and $r_s=r_{S_{\Omega},b_s}^A$ for $s=1,\cdots,n$. Without loss of generality, assume that $r_s\geq 1$. Let $M,L$ be two postive integers such that $ r \leq L \leq M-r$. The solutions to the generalized singular eigenvalue problem:
\begin{equation}\label{matrixpencil0}
(z{H}_{M-L,L}(0)-{H}_{M-L,L}(1))v=0
\end{equation}
subject to $b \in \mathrm{Col}({H^{*}}_{\Omega,M-L,L}(0))$,  denoting the column space of ${H^{*}}_{\Omega,M-L,L}(0)$, are
\[ z_s=\lambda_s\]
\[v= \text{ the $r_1+\cdots+r_s$ th column of }{V}_{r,L}^+\]
for $s=1,\cdots,n.$
\end{proposition}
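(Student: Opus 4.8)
The plan is to substitute the factorization of Lemma~\ref{mtxdecomp} into the pencil \eqref{matrixpencil0} and thereby reduce it to the eigenvalue problem of the small Jordan matrix $\hat J$. Three structural facts packaged in Lemma~\ref{mtxdecomp} will do the heavy lifting, and I would record them first. Since $r\le L$ and $r\le M-L$, the leading $r_s\times r_s$ submatrix of each block $V_{r_s,L}$ (and of $V_{r_s,M-L}$) is upper triangular with unit diagonal, because $C^{k}_{k}=1$, so it is nonsingular; combined with the distinctness of $\lambda_1,\dots,\lambda_n$, the stacked confluent-Vandermonde matrices $V_{r,L}$ and $V_{r,M-L}$ have full row rank $r$. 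Next, $\Lambda=\mathrm{diag}(\Lambda_1,\dots,\Lambda_n)$ with each $\Lambda_s$ anti-triangular and constant along its anti-diagonal with value $\langle N_s^{r_s-1}(U^{-1}b)_s,(U^*e_i)_s\rangle$; I claim this scalar is nonzero. Indeed, $r_s=r_{S_{\Omega},b_s}^A$ is by definition the smallest degree of a monic $(S_{\Omega},A,b_s)$-annihilating polynomial, and by Lemma~\ref{lemma2} this polynomial equals $(z-\lambda_s)^{r_s}$, so $(z-\lambda_s)^{r_s-1}$ is not annihilating, which, once everything is transported to the $\lambda_s$-block where $A-\lambda_s$ acts as $N_s$, is exactly the statement $\langle N_s^{r_s-1}(U^{-1}b)_s,(U^*e_i)_s\rangle\neq0$. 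Hence each $\Lambda_s$, and so $\Lambda$, is invertible. In particular $V^{T}_{r,M-L}\Lambda$ is injective, $\mathrm{rank}\,H_{M-L,L}(0)=r$, $\ker H_{M-L,L}(0)=\ker V_{r,L}$, and $\mathrm{Col}({H^{*}}_{\Omega,M-L,L}(0))=(\ker V_{r,L})^{\perp}=\mathrm{Col}(V_{r,L}^{*})$, a space of dimension $r$.

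The reduction itself is then short. From \eqref{factorization2},
\[
z H_{M-L,L}(0)-H_{M-L,L}(1)=V^{T}_{r,M-L}\,\Lambda\,(zI_r-\hat J)\,V_{r,L},
\]
and since $V^{T}_{r,M-L}\Lambda$ is injective, the pencil equation \eqref{matrixpencil0} is equivalent to $(zI_r-\hat J)(V_{r,L}v)=0$. Writing $w=V_{r,L}v$, the side constraint $v\in\mathrm{Col}({H^{*}}_{\Omega,M-L,L}(0))=(\ker V_{r,L})^{\perp}$ means precisely that $v\mapsto V_{r,L}v=w$ is a linear isomorphism from this $r$-dimensional space onto $\mathbb{C}^r$; it discards the degenerate null vectors $v\in\ker V_{r,L}$, for which the pencil vanishes identically in $z$. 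So I am left with $(zI_r-\hat J)w=0$ for $w\neq0$. Since $\hat J$ is block diagonal with blocks $\lambda_s I_{r_s}+\hat N_s$ and $\hat N_s$ carries ones on its subdiagonal, $\ker\hat N_s$ is the span of the last coordinate of block $s$, and the $\lambda_s$ are distinct; hence the only solutions are $z=\lambda_s$ with $w$ a nonzero multiple of $\varepsilon_{r_1+\cdots+r_s}$ (the $(r_1+\cdots+r_s)$-th standard basis vector of $\mathbb{C}^r$), for $s=1,\dots,n$. Pulling $w$ back through the isomorphism and using $V_{r,L}V_{r,L}^{+}=I_r$ together with $\mathrm{Col}(V_{r,L}^{+})=\mathrm{Col}(V_{r,L}^{*})$, the corresponding $v$ is the matching multiple of $V_{r,L}^{+}\varepsilon_{r_1+\cdots+r_s}$, i.e. the $(r_1+\cdots+r_s)$-th column of $V_{r,L}^{+}$ up to scaling, which is the asserted conclusion.

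I expect the only genuine work to be in the first step, and in particular the nonvanishing of the anti-diagonal entry $\langle N_s^{r_s-1}(U^{-1}b)_s,(U^*e_i)_s\rangle$ of $\Lambda_s$: this is where the precise definition of $r_s=r_{S_{\Omega},b_s}^A$ and the block form $q_{S_{\Omega},b_s}^A(z)=(z-\lambda_s)^{r_s}$ coming from Lemma~\ref{lemma2} have to be used carefully. The full row rank of the confluent-Vandermonde factors is standard, and once $\Lambda$ is invertible the passage to the pencil of $\hat J$ and the reading off of its eigenpairs is routine linear algebra, with the constraint $v\in\mathrm{Col}({H^{*}}_{\Omega,M-L,L}(0))$ serving only to restrict attention to the $r$-dimensional subspace on which the pencil is regular.
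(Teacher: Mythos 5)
Your proposal is correct and follows essentially the same route as the paper's proof: substitute the factorization of Lemma~\ref{mtxdecomp} into the pencil, use the full row rank of the confluent Vandermonde factors and the invertibility of $\Lambda$ to reduce the constrained problem to the eigenvalue problem $(z-\hat J)c=0$, and pull the eigenvectors back through $V_{r,L}^{+}$. The only difference is cosmetic: you prove the invertibility of $\Lambda$ directly from its anti-triangular block structure together with the minimality of $r_s$ (which forces $\langle N_s^{r_s-1}(U^{-1}b)_s,(U^*e_i)_s\rangle\neq 0$), whereas the paper simply cites Proposition~\ref{Prony} for this fact.
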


\begin{proof} Using the factorization \eqref{factorization2}, we re-write the equation \eqref{matrixpencil0} as
$$  {V}^T_{r,M-L} {\Lambda} (z-{\hat J}) {V}_{r,L}v=0. $$

Since the matrix $V_{r,L}$ has linearly independent rows,  every $x$ in $\mathrm{Col}({H^{*}}_{\Omega,M-L,L}(0))$  can be represented as $v=V_{r,L}^{+}c$ for some vector $c \in \mathbb{C}^r$ due to the property of psuedo-inverse. We also note that the matrix ${V}^T_{r,M-L}$ has linearly independent columns, and the matrix $\Lambda$ is invertible (by Proposition \ref{Prony}). Therefore, it suffices to solving 
$$    (z-{\hat J}) c=0. $$ 
We then know the possible values for $z$ such that the linear equations have nonzero solution are $\lambda_1,\cdots,\lambda_n$. The conclusion follows by solving the corresponding linear system of equations.  
\end{proof}

\paragraph{The general case. }  Let $\Omega=\{i_1,\cdots, i_k\}\subset [d]$, and assume that $r_s=r_{S_{\Omega,b_s}^A} \geq 1$ for $s=1,\cdots,n$. By appropriate permutations of rows, the Hankel-like matrix $H_{\Omega, M-L,L+1}$ defined in  \eqref{bighankel} can be transformed as 
\begin{equation}\label{permutationhankel}
\tilde H_{\Omega, M-L,L+1}:=\begin{bmatrix}
H_{\{i_1\},M-L,L+1}\\
\vdots\\
H_{\{i_k\},M-L,L+1}
\end{bmatrix}.
\end{equation}
Then each Hankel matrix $H_{\{i_j\},M-L,L+1}$ can be still factorized in the form  of \eqref{factorization2} 
\begin{equation}\label{factorization21}
    {H}_{\{i_j\},M-L,L}(t) = {V}^T_{r,M-L} {\Lambda_{\{i_j\}}} {\hat J}^{t} {V}_{r,L}, t=0,1. 
\end{equation} In particular, if $r_{S_{\{i_j\},b_s}}^A=0$, then the corresponding $\Lambda_{{\{i_j\}},s}$ in \eqref{lambdas}  will be a $r_s \times r_s$ zero matrix. 

 The solutions to the generalized singular eigenvalue problem:
\begin{equation}\label{matrixpencil}
(z{H}_{\Omega,M-L,L}(0)-{H}_{\Omega,M-L,L}(1))v=0
\end{equation}
subject to $x \in \mathrm{Col}({H^{*}}_{\Omega, M-L,L}(0))$,  denoting the column space of ${H^{*}}_{\Omega,M-L,L}(0)$ are the same with the problem 

\begin{equation}\label{matrixpenciltilde}
(z\tilde{H}_{\Omega,M-L,L}(0)-\tilde{H}_{\Omega,M-L,L}(1))\tilde v=0
\end{equation}
subject to $\tilde v \in \mathrm{Col}({\tilde H^{*}}_{\Omega, M-L,L}(0))$, in the sense that $\tilde v$ is a permutation of $v$.  Due to the factorization \eqref{factorization2} and  that $r_s=\max\limits_{j=1,\cdots,k}r_{S_{\{i_k\}},b_s}^A$, we have that  $\mathrm{Col}({\tilde H^{*}}_{\Omega, M-L,L}(0))=\mathrm{Range}(V_{r,L}^{+})$. Therefore the solution to \eqref{matrixpenciltilde} is equivalent to 
\begin{equation}\label{simplehankel}
\Lambda_{\{i_j\}}(z-\hat J)\tilde c=0, j=1,\cdots,k.
\end{equation}

Since for each $s=1,\cdots,n$, there is at least one $\Lambda_{\{i_j\},s}$ in \eqref{lambdas} is invertible, therefore the  values of $z$ to solve \eqref{simplehankel} with nonzero $\tilde c$  are $\lambda_1,\cdots, \lambda_n$. We therefore obtain the following conclusion:

\begin{theorem}\label{mtx}
Let $\Omega\subset [d]$, and denote $r:=r_{S_{\Omega,b}^A}$ . Let $M,L$ be two postive integers such that  $ r\leq L \leq M- r $. The $L \times L$ matrix  ${H^+}_{\Omega, M-L,L}(0){H}_{\Omega,M-L,L}(1)$ has  the same eigenvalues with roots of  $q_{S_{\Omega},b}^A $ and  $L-r$ zeros as eigenvalues.
\end{theorem}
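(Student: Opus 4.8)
The plan is to reduce the assertion to the row‑permuted Hankel matrices $\tilde H(t):=\tilde H_{\Omega,M-L,L}(t)$ of \eqref{permutationhankel}, exploit the block factorization \eqref{factorization21}, and then read off the characteristic polynomial of $H_{\Omega,M-L,L}^{+}(0)H_{\Omega,M-L,L}(1)$ via a full‑rank factorization together with the elementary identity relating the spectra of $AB$ and $BA$. Write $H(t):=H_{\Omega,M-L,L}(t)$. The first step is to observe that $\tilde H(t)=PH(t)$ for $t=0,1$ with the \emph{same} orthogonal row permutation $P$ carrying \eqref{bighankel} to \eqref{permutationhankel}; since $(PX)^{+}=X^{+}P^{*}$ and $P^{*}P=I$, this gives the literal equality $H^{+}(0)H(1)=\tilde H^{+}(0)\tilde H(1)$, so it suffices to analyze the permuted version. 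Stacking the single‑point factorizations \eqref{factorization21} then yields $\tilde H(t)=\mathcal G\,\hat J^{\,t}V_{r,L}$ for $t=0,1$, where $\hat J$ and $V_{r,L}$ are as in Lemma \ref{mtxdecomp} and $\mathcal G=(I_k\otimes V^{T}_{r,M-L})\,\Lambda_\Omega$, with $\Lambda_\Omega$ the vertical stack of $\Lambda_{\{i_1\}},\dots,\Lambda_{\{i_k\}}$.

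Next I would establish the two rank facts that drive the argument. The matrix $V_{r,L}$ has full row rank $r$, since its leading $r\times r$ minor is of confluent‑Vandermonde type with the distinct nodes $\lambda_1,\dots,\lambda_n$ (the same fact already used in Lemma \ref{mtxdecomp} and the Proposition following it); the same reasoning shows $V^{T}_{r,M-L}$ has full column rank $r$ because $M-L\geq r$, so $I_k\otimes V^{T}_{r,M-L}$ is injective. And $\Lambda_\Omega$ has full column rank $r$: if $\Lambda_\Omega c=0$ then $\Lambda_{\{i_j\},s}c_s=0$ for all $j$ and all $s$, but for each $s$ the value $r_s=\max_j r^{A}_{S_{\{i_j\}},b_s}$ is attained by some $j$, for which $\Lambda_{\{i_j\},s}$ is invertible (as noted after \eqref{lambdas}), forcing $c=0$. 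Hence $\mathcal G$ has full column rank $r$ and $\tilde H(0)=\mathcal G V_{r,L}$ is a full‑rank factorization, so $\tilde H^{+}(0)=V_{r,L}^{+}\mathcal G^{+}$ with $\mathcal G^{+}\mathcal G=I_r$ and $V_{r,L}V_{r,L}^{+}=I_r$; therefore
\begin{equation*}
H^{+}(0)H(1)=\tilde H^{+}(0)\tilde H(1)=V_{r,L}^{+}\mathcal G^{+}\mathcal G\,\hat J\,V_{r,L}=V_{r,L}^{+}\hat J\,V_{r,L}.
\end{equation*}
Writing this $L\times L$ matrix as $AB$ with $A=V_{r,L}^{+}\in\mathbb C^{L\times r}$ and $B=\hat J\,V_{r,L}\in\mathbb C^{r\times L}$, we get $BA=\hat J\,V_{r,L}V_{r,L}^{+}=\hat J$, so the identity $\det(zI_L-AB)=z^{L-r}\det(zI_r-BA)$ gives characteristic polynomial $z^{L-r}\chi_{\hat J}(z)=z^{L-r}\prod_{s=1}^{n}(z-\lambda_s)^{r_s}$. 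Finally, by Lemma \ref{lemma2} (equivalently Claims 1--2 in the proof of Theorem \ref{dyn-sampling}) one has $q^{A}_{S_\Omega,b}(z)=\prod_{s=1}^{n}(z-\lambda_s)^{r_s}$ with $\sum_s r_s=r$, so the characteristic polynomial of $H^{+}_{\Omega,M-L,L}(0)H_{\Omega,M-L,L}(1)$ equals $z^{L-r}q^{A}_{S_\Omega,b}(z)$; that is, its eigenvalues are exactly the roots of $q^{A}_{S_\Omega,b}$ together with $L-r$ zeros.

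The step I expect to be the main obstacle is the careful verification of the two full‑rank statements — that the stacked matrix $V_{r,L}$ has full row rank $r$ and that $\Lambda_\Omega$ has full column rank $r$ — since these are precisely what legitimize the full‑rank pseudoinverse identity $\tilde H^{+}(0)=V_{r,L}^{+}\mathcal G^{+}$ and the collapse $BA=\hat J$. Everything else (the permutation invariance and the $AB$‑versus‑$BA$ determinant identity) is routine.
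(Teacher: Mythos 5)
Your proof is correct, and it is built on the same factorization (Lemma \ref{mtxdecomp} and its stacked version \eqref{factorization21}) as the paper, but the mechanism is genuinely different. The paper derives Theorem \ref{mtx} as a quick corollary of the generalized eigenvalue problem \eqref{matrixpencil}: it left-multiplies the pencil equation by $H^{+}_{\Omega,M-L,L}(0)$, uses that $H^{+}(0)H(0)$ is the orthogonal projection onto $\mathrm{Col}(H^{*}(0))$ to conclude that each $\lambda_s$ is an eigenvalue of $H^{+}(0)H(1)$, and then counts $L-r$ zero eigenvalues from a rank argument. You instead bypass the pencil entirely and compute the matrix in closed form: after the permutation reduction and the full-rank factorization $\tilde H(0)=\mathcal G\,V_{r,L}$ you obtain $H^{+}(0)H(1)=V_{r,L}^{+}\hat J\,V_{r,L}$, and the $\det(zI_L-AB)=z^{L-r}\det(zI_r-BA)$ identity then yields the entire characteristic polynomial $z^{L-r}\prod_s(z-\lambda_s)^{r_s}=z^{L-r}q^{A}_{S_\Omega,b}(z)$. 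Your route buys two things the paper's proof leaves implicit: (i) the algebraic multiplicities of the nonzero eigenvalues of $H^{+}(0)H(1)$ are shown to coincide with the multiplicities $r_s$ of the roots of $q^{A}_{S_\Omega,b}$, so the conclusion holds as a multiset statement, not merely as an equality of spectra as sets; and (ii) the full-rank hypotheses (row rank of the confluent-Vandermonde stack $V_{r,L}$, column rank of $\Lambda_\Omega$ via the invertible block $\Lambda_{\{i_j\},s}$ attaining $r_s=\max_j r^{A}_{S_{\{i_j\}},b_s}$) that legitimize the pseudoinverse manipulations are verified explicitly rather than absorbed into the earlier proposition. The price is length; the paper's argument is shorter given that the pencil proposition has already been established. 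One cosmetic remark: when some $r_s=0$ the corresponding block is simply absent from $\hat J$ and $V_{r,L}$, which your formula handles automatically, but it is worth stating that the product over $s$ runs only over recoverable eigenvalues.
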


\begin{proof}
Left multiplying \eqref{matrixpencil} by ${H^+}_{\Omega,M-L,L}$, we have
\begin{equation}
{H^+}_{\Omega, M-L,L}(0){H}_{\Omega,M-L,L}(1)x=z {H^+}_{\Omega,M-L,L}(0){H}_{\Omega, M-L,L}(0)x.
\end{equation}
By property of the pseudoinverse, ${H^+}_{\Omega, M-L,L}(0){H}_{\Omega,M-L,L}(0)$ is the orthogonal projection onto  $\mathrm{Col}({H^{*}}_{\Omega,M-L,L}(0))$. Since $x \in \mathrm{Col}({H^{*}}_{\Omega,M-L,L}(0))$, it is easy to see that  $\lambda_1,\cdots, \lambda_n$ are $n$ eigenvalues of ${H^+}_{\Omega,M-L,L}(0){H}_{\Omega,M-L,L}(1).$  Since the rank of ${H^+}_{\Omega,M-L,L}(0){H}_{\Omega,M-L,L}(1)$ is $ r \leq L$, ${H^+}_{\Omega,M-L,L}(0){H}_{\Omega,M-L,L}(1)$ has $L- r_{S_{\Omega,b}^A}$ zero eigenvalues.  
\end{proof}

It is evident that one advantage of the matrix pencil method is the fact that there is no need to compute the coefficients of the minimal annihilating polynomial $q_{S_{\Omega},b}^A$. In this way, we  need  only solve a standard eigenvalue problem of a square matrix  ${H^+}_{\Omega,M-L,L}(0){H}_{\Omega,M-L,L}(1)$. In order to compute ${H^+}_{\Omega,M-L,L}(0){H}_{\Omega,M-L,L}(1)$.  Inspired by the idea of Algorithm 5 in \cite{MP-Hua2}, we can employ the Singular Value Decomposition(SVD) based Matrix Pencil Method for Hankel-like matrices.

\begin{proposition}
In addition to the conditions of Theorem \ref{mtx}, given the SVD of the Hankel-like matrix, 

$${H}_{\Omega,M-L,L+1}={U}_{\Omega, M-L}{\Sigma}_{\Omega, M-L,L+1} {W}^*_{\Omega, L+1},$$
then $${H^+}_{\Omega,M-L,L}(0){H}_{\Omega, M-L,L}(1)=\left({W}^*_{\Omega, L+1}(1:r,1:L)\right)^+\left({W}_{\Omega, L+1}^*(1:r,2:L+1)\right).$$
\end{proposition}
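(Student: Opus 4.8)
The plan is to relate the pseudoinverse-product $H^+_{\Omega,M-L,L}(0)\,H_{\Omega,M-L,L}(1)$ to the trailing factor $W^*_{\Omega,L+1}$ of the SVD of the full Hankel-like matrix $H_{\Omega,M-L,L+1}$, exploiting the fact that both shifted Hankel matrices $H_{\Omega,M-L,L}(0)$ and $H_{\Omega,M-L,L}(1)$ are column-submatrices of $H_{\Omega,M-L,L+1}$. Concretely, by definition \eqref{bighankel} we have $H_{\Omega,M-L,L}(0)=H_{\Omega,M-L,L+1}(:,1:L)$ and $H_{\Omega,M-L,L}(1)=H_{\Omega,M-L,L+1}(:,2:L+1)$. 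Writing $H_{\Omega,M-L,L+1}=U_{\Omega,M-L}\Sigma_{\Omega,M-L,L+1}W^*_{\Omega,L+1}$ and truncating to the numerical rank $r=r^A_{S_\Omega,b}$ (so that $U$ is restricted to its first $r$ columns, $\Sigma$ to its leading $r\times r$ block $\Sigma_r$, and $W^*$ to its first $r$ rows), each shifted matrix factors as
\begin{equation*}
H_{\Omega,M-L,L}(0)=U_r\Sigma_r\,W^*_{\Omega,L+1}(1:r,1:L),\qquad
H_{\Omega,M-L,L}(1)=U_r\Sigma_r\,W^*_{\Omega,L+1}(1:r,2:L+1).
\end{equation*}

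The key step is then to compute $H^+_{\Omega,M-L,L}(0)$ from this factorization. Since $U_r$ has orthonormal columns and $\Sigma_r$ is invertible, $U_r\Sigma_r$ has full column rank $r$, so $(U_r\Sigma_r)^+=\Sigma_r^{-1}U_r^*$ and $(U_r\Sigma_r)^+(U_r\Sigma_r)=I_r$. Using the reverse-order law for pseudoinverses in the special case where the left factor has full column rank (here $U_r\Sigma_r$) and the right factor $W^*_{\Omega,L+1}(1:r,1:L)$ has full row rank — which holds because $\mathrm{rank}(H_{\Omega,M-L,L}(0))=r$ by Theorem \ref{mtx}/Lemma \ref{mtxdecomp} and it has $r$ rows after truncation — one gets
\begin{equation*}
H^+_{\Omega,M-L,L}(0)=\big(W^*_{\Omega,L+1}(1:r,1:L)\big)^+\,\Sigma_r^{-1}U_r^*.
\end{equation*}
Multiplying by $H_{\Omega,M-L,L}(1)=U_r\Sigma_r\,W^*_{\Omega,L+1}(1:r,2:L+1)$, the middle factor $\Sigma_r^{-1}U_r^*U_r\Sigma_r=I_r$ collapses, yielding exactly
\begin{equation*}
H^+_{\Omega,M-L,L}(0)\,H_{\Omega,M-L,L}(1)=\big(W^*_{\Omega,L+1}(1:r,1:L)\big)^+\,\big(W^*_{\Omega,L+1}(1:r,2:L+1)\big),
\end{equation*}
which is the claimed identity.

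The main obstacle, and the point requiring care, is the reverse-order law for the Moore–Penrose pseudoinverse: in general $(XY)^+\neq Y^+X^+$, so one must verify the hypotheses under which equality holds. The standard sufficient condition (Greville) is that $X$ has full column rank and $Y$ has full row rank; I would justify that both hold here — the left factor $U_r\Sigma_r$ is $(M-L)|\Omega|\times r$ with rank $r$, and the right factor $W^*_{\Omega,L+1}(1:r,1:L)$ is $r\times L$ with rank $r$ (the latter because $H_{\Omega,M-L,L}(0)$ has rank $r$ and $U_r\Sigma_r$ is injective, forcing the remaining factor to have rank $r$) — and then invoke the law. A secondary point is to confirm that after truncation the column space $\mathrm{Col}(H^*_{\Omega,M-L,L}(0))$ on which the eigenvalue problem of Theorem \ref{mtx} lives equals $\mathrm{Range}\big((W^*_{\Omega,L+1}(1:r,1:L))^+\big)$, so the two eigenvalue problems are genuinely the same; this is immediate from the factorization since $\mathrm{Col}(H^*_{\Omega,M-L,L}(0))=\mathrm{Row}(W^*_{\Omega,L+1}(1:r,1:L))^{*}=\mathrm{Range}\big((W^*_{\Omega,L+1}(1:r,1:L))^+\big)$. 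Everything else is bookkeeping with the submatrix notation.
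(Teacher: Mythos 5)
Your argument is correct. Note that the paper states this proposition without any proof (it only points to the SVD-based matrix pencil method of Hua--Sarkar for inspiration), so there is no in-paper argument to compare against; yours supplies the missing justification in the standard way. The two points that actually need checking are exactly the ones you flag: (i) under the hypotheses of Theorem \ref{mtx} ($r\le L\le M-r$) the full matrix ${H}_{\Omega,M-L,L+1}$ has rank exactly $r$, so the truncated SVD factorization $H_{\Omega,M-L,L}(t)=U_r\Sigma_r\,W^*_{\Omega,L+1}(1:r,t+1:t+L)$ is exact rather than approximate, and the right factor for $t=0$ inherits full row rank $r$ from $\mathrm{rank}\bigl(H_{\Omega,M-L,L}(0)\bigr)=r$; and (ii) the reverse-order law $(XY)^+=Y^+X^+$, which fails in general but holds in the full-column-rank times full-row-rank case you invoke. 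With those verified, the cancellation $\Sigma_r^{-1}U_r^*U_r\Sigma_r=I_r$ gives the identity, and the remark identifying $\mathrm{Col}\bigl(H^*_{\Omega,M-L,L}(0)\bigr)$ with $\mathrm{Range}\bigl((W^*_{\Omega,L+1}(1:r,1:L))^+\bigr)$ correctly ties the result back to the constrained eigenvalue problem of Theorem \ref{mtx}. No gaps.
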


We now summarize the Generalized Matrix Pencil Method as below.

\begin{alg} Matrix Pencil Method \\
    \textbf{Input:} $M\in\mathbb{N}$ s.t. $M>3d$, observed samples $\{S_{\Omega}A^tb, t=0,\cdots, M-1\}$ and the number of recoverable eigenvalues $r$.\\
    \indent \textbf{Step 1. Construct and Solve Hankel-like Matrix Equation.}Construct the matrix equation in as $H_{\Omega,M-L,L}(0)C=H_{\Omega,M-L,L}(1)$ from Equations \eqref{bighankel} and \eqref{colhankel} and compute SVD of augmented matrix $[H_{\Omega,M-L,L}(0) ~ H_{\Omega,M-L,L}(1)] = U_{\Omega, M-L}'D_{\Omega, M-L,2L}'W_{\Omega, 2L}^{\prime*}$ and compute its least square solution, total least square solution and SVD-based solution by
    \begin{align*}
        C^{~LS} &= \left(H_{\Omega,M-L,L}(0)\right)^+H_{\Omega,M-L,L}(1).\\
        C^{~TLS} &= \left(W_{\Omega, 2L}^{\prime} (1:L,L\!+\!1:2L)\right)^+W_{\Omega, 2L}'(L\!+\!1:2L, L\!+\!1:2L).\\
        C^{~SVD} &= \left(W^*_{\Omega, L+1} (1:r,1:L)\right)^+W^*_{\Omega, L\!+\!1}(1:r,2:L\!+\!1).
    \end{align*}
    \vspace{-0.2in}
    \\
    \indent \textbf{Step 2. Find the Eigenvalues of Companion Matrix.} Compute all eigenvalues $\lambda_j^{LS}$, $\lambda_j^{TLS}$, $\lambda_j^{~SVD}\in\mathbb{C}~,j=1,2,\dots,L$ of the companion matrix $C^{LS}~,C^{TLS}~,C^{SVD}$, repectively.\\
    \indent \textbf{Step 3. Remove redundant zeros.} Remove the eigenvalues with sufficiently small norm, i.e., $\|\lambda_j\|\leq \eta$ for a small positive number $\eta \in \mathbb{R}^+$.\\
    \textbf{Output:} $\{z_j^{~LS} \in \mathbb{C}, j=1,2,\dots,L ~ | ~ \|\lambda_j^{~LS}\|>\eta \}$, $\{z_j^{~TLS} \in \mathbb{C}, j=1,2,\dots,L ~ | ~ \|\lambda_j^{~TLS}\|>\eta \}$, $\{z_j^{~SVD} \in \mathbb{C}, j=1,2,\dots,L ~ | ~ \|\lambda_j^{~SVD}\|>\eta \}$.
    
\end{alg}

\subsection{Generalized ESPRIT Method}

The original ESPRIT Method relies on a particular property of  Vandermonde matrices known as  rotational invariance \cite{roy1986esprit}. By the factorization \eqref{factorization2}, and the permutation argument in \eqref{permutationhankel}, we have seen that the Hankel-like data matrix ${H}_{\Omega,M-L,L+1}$  is rank-deficient and that its range space, spanned by columns of $V_{\Omega,r,M-L}^T$, satisfies a generalized rotation invariance property:
\begin{align}\label{eq:rot-inv}V_{\Omega,r,M\!-\!L}^T(2:M\!-\!L,:)=V_{\Omega,r,M-L}^T(1:M\!-\!L\!-\!1,:)\hat J^T,\end{align}
where the matrix $V_{\Omega,r,M-L}$ and the Jordan matrix $\hat J$ is defined in \eqref{factorization2}.   Hence, we can generalize the ESPRIT algorithm based on SVD for estimating the eigenvalues of $\hat J$ in our setting. We summarize this method below: 

\begin{alg} ESPRIT Method \\
    \textbf{Input:} $M\in\mathbb{N}$ s.t. $M>3d$, observed samples $\{S_{\Omega}A^tb, t=0,\cdots, M-1\}$ and the number of recoverable eigenvalues $r$.\\
    \indent \textbf{Step 1. Construct and Solve Matrix Equation.} Construct the generalized rotation invariance matrix equation as in \eqref{eq:rot-inv} and compute its solution by
    \begin{align*}
        \hat J = \left(U_{\Omega,M\!-\!L} (1:M\!-\!L\!-\!1,1:r)\right)^+U_{\Omega,M-L}(2:M\!-\!L, 1:r).
    \end{align*}
    \vspace{-0.2in}
    \\
    \indent \textbf{Step 2. Find the Eigenvalues of Matrix.} Compute all eigenvalues $\lambda_j^{~ES}\in\mathbb{C}~,j=1,2,\dots,r$ of matrix $\hat J$.\\
    \textbf{Output:} $\{\lambda_j^{~ES} \in \mathbb{C}, j=1,2,\dots,r\}$.
\end{alg}

\section{Empirical evaluations}
In this section, we examine and compare the performance of Algorithm 1-4 on estimating spectrum of various affine systems (see Section \ref{sec:NAlg}). Our focus is the noise-free data and consider the system matrix $A$ whose operator norm is no greater than 1. Denote by $r$  the number of recoverable eigenvalues, and  we let $\Lambda = \{\lambda_1,\cdots, \lambda_r\}$ be $\Omega$-recovered eigenvalue set,  and $\hat \Lambda = \{\hat \lambda_1,\cdots, \hat\lambda_r\}$ be the eigenvalue set obtained by our numerical algorithms. The performance of the algorithms is measured by the root mean squared error (RMSE) and the infinity norm error (INE) defined as
\begin{align}
    \mathrm{RMSE}(\Omega, M,\sigma)&=\sqrt{\frac{1}{r} \sum_{i=1}^{r} (\lambda_i-\hat \lambda_i)^2},\\
    \mathrm{RINF}(\Omega, M,\sigma)&=\max_{i=1,\cdots r}|\lambda_i-\hat \lambda_i|.
\end{align}

In each example, we obtain the observational trajectory data of the form $\{S_{\Omega}x_t: t=0,1,\cdots, M-1\}$, where $\Omega\subset [d]$.  In numerical experiements, we get an estimation $\hat r$ of the number of recoverable eigenvalues using Algorithm \ref{getr} and then use the Hausdorff distance to match the recoverable eigenvalues with the exact ones as
\begin{equation}
\lambda_i = \mathop{\text{arg}\, \text{min}}\limits_{\lambda\in\Lambda} (\lambda-\hat\lambda_i)\text{, for }\hat\lambda_i \in \hat\Lambda.
\end{equation}

The results and details are listed below.


\paragraph{Example 1. Discrete Affine System.}  Recall that a discrete state-time affine system is given by 
$$x_{t+1}=Ax_t+c.$$ We consider a system of dimension 8. Denote by $J=\text{diag}(0.3I_3+\hat N_3, 0.5I_2+\hat N_2, 0.6,-0.2I_2)$, and $U=\text{diag}(I_3,~\text{toeplitz}([1,0,0],[1,1,1]),~\text{hankel}([1,2],[2,1]))$, where $I_s$ denotes the $s\times s$ identity matrix, and $\hat N_s$ denotes $s\times s$ nilpotent matrix with one cyclic block as in Lemma \ref{mtxdecomp}. We have that $A=UJU^{-1}$ with the initial condition $x_0=[8,7,\cdots,1]^T$ and $c= [1,1,\cdots,1]^T$. To illustrate the reconstruction flows, we first depict the observations of the discrete affine system for the choice of $\Omega=\{1,2,4,7\}$ in Figure \ref{fig:discrete-affine-system} (left). Note that by introducing $y_t=x_t+(A-I_8)^{-1}c$, the discrete affine system is reduced to the linear dynamical system. We can then apply Algorithm \ref{getr} to construct Hankel-type matrix $H_{\Omega, M-L, L}$ and compute its quotients of singular values $\sigma_i/\sigma_{i+1}$ for $i=1,2,\dots,8$ as shown in Figure  \ref{fig:discrete-affine-system} (right). In this case, 6 eigenvalues are $\Omega$-recoverable, matching the analytical results in Theorem \ref{dyn-sampling}. To further compare the performance of various algorithms and investigate the impact of $\Omega, M$, we conduct the reconstruction with various choices of $\Omega$ and $M$ as shown below in Table \ref{table:discrete-affine-system-Omega} and \ref{table:discrete-affine-system-M}. 

\begin{figure}[!htbp]
    \includegraphics[width=0.5\linewidth]{./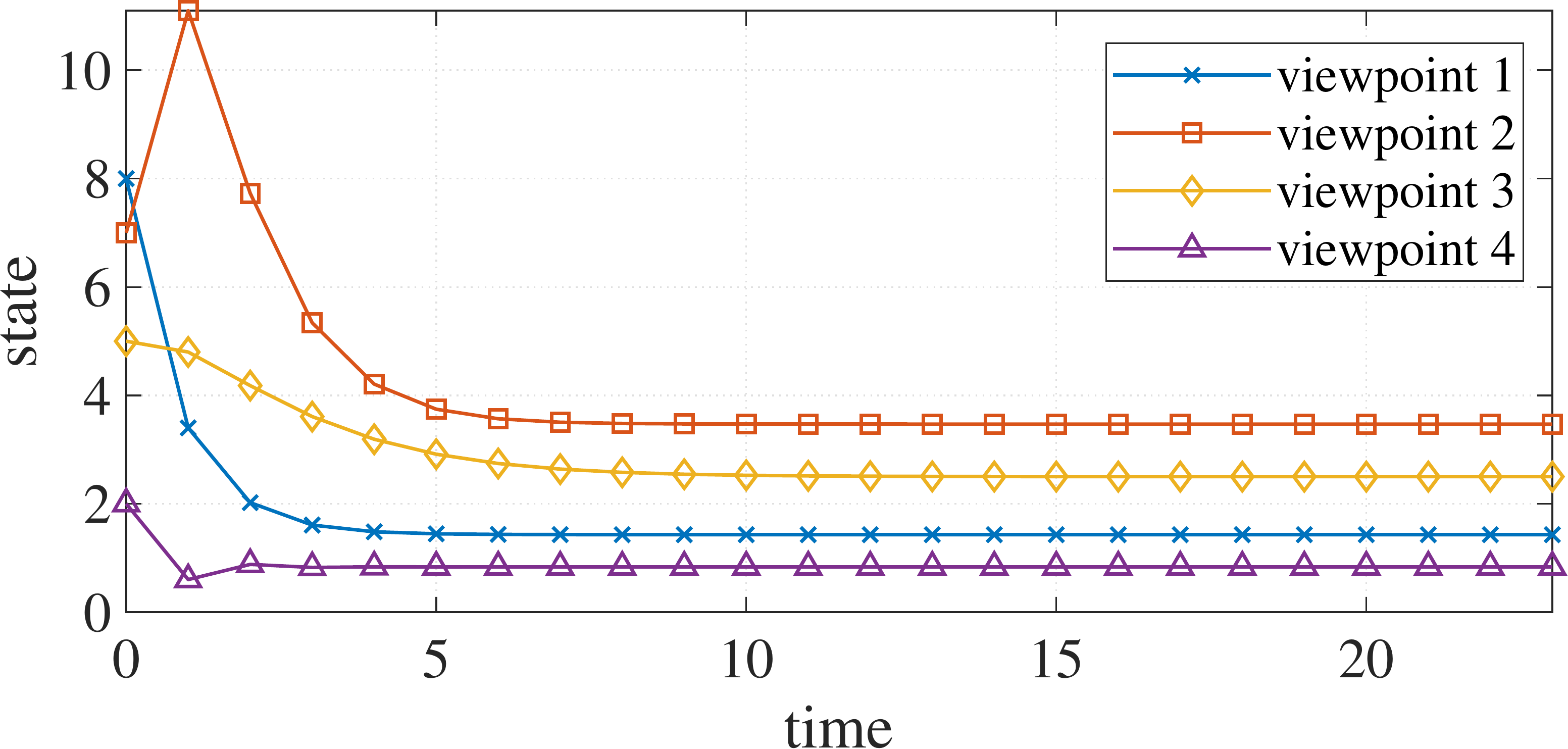}
    \includegraphics[width=0.5\linewidth]{./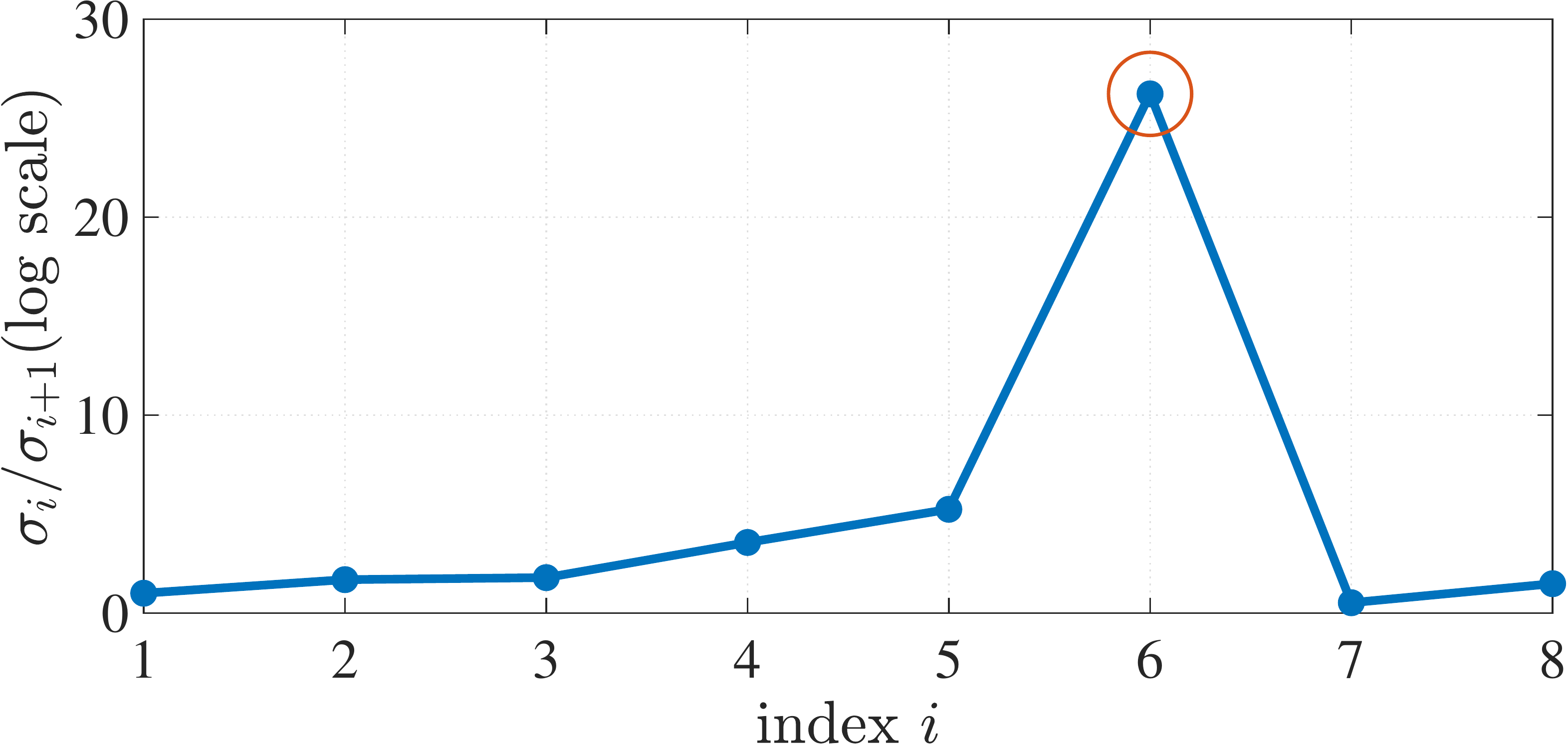}
    \caption{\textbf{Example 1.} (Left) State-time plot of observations of the trajectory with $\Omega=\{1,2,4,7\}$. (Right) Spectrum quotient of Hankel-type matrix for the choice of $\Omega=\{1,2,4,7\}$. }
    \label{fig:discrete-affine-system}
\end{figure}
\begin{table}[!h] 
    {\tiny
    \begin{center}
    \begin{tabular}
    {|p{4.3em}|p{1.2em}       |p{0.5em}|p{0.5em}     |p{5em}|p{5em}    |p{5em}|p{5em}          |p{5em}|p{5em}    |p{5em}|p{5em}    |}
    \hline
    \multirow{2}{*}{$\Omega$}     &\multirow{2}{*}{$M$}          &\multirow{2}{*}{$r$}          &\multirow{2}{*}{$\hat r$}     &\multicolumn{2}{l|}{Prony}                                &\multicolumn{2}{l|}{Matrix Pencil}                           &\multicolumn{2}{l|}{Matrix Pencil SVD}                       &\multicolumn{2}{l|}{ESPRIT}                                  \\ \cline{5-12}
                                  &                              &                              &                              &RMSE                          &INE                           &RMSE                          &INE                           &RMSE                          &INE                           &RMSE                          &INE                           \\ \hline
    $\{1\}$                       &24                            &1                             &1                             &$ 3.9 \cdot 10^{-17}$         &$ 1.1 \cdot 10^{-16}$         &$ 3.9 \cdot 10^{-17}$         &$ 1.1 \cdot 10^{-16}$         &$ 2.0 \cdot 10^{-17}$         &$ 5.6 \cdot 10^{-17}$         &$ 2.0 \cdot 10^{-17}$         &$ 5.6 \cdot 10^{-17}$         \\ \cline{5-12}
    $\{1,4\}$                     &24                            &4                             &4                             &$ 4.3 \cdot 10^{-8}$          &$ 8.7 \cdot 10^{-8}$          &$ 1.9 \cdot 10^{-8}$          &$ 3.8 \cdot 10^{-8}$          &$ 4.4 \cdot 10^{-8}$          &$ 8.7 \cdot 10^{-8}$          &$ 1.7 \cdot 10^{-8}$          &$ 3.4 \cdot 10^{-8}$          \\ \cline{5-12}
    $\{1,4,7\}$                   &24                            &5                             &5                             &$ 6.0 \cdot 10^{-8}$          &$ 1.2 \cdot 10^{-7}$          &$ 4.0 \cdot 10^{-8}$          &$ 8.0 \cdot 10^{-8}$          &$ 3.3 \cdot 10^{-8}$          &$ 6.5 \cdot 10^{-8}$          &$ 4.8 \cdot 10^{-4}$          &$ 9.7 \cdot 10^{-4}$          \\ \cline{5-12}
    $\{1,2,4,7\}$                 &24                            &6                             &6                             &$ 5.5 \cdot 10^{-8}$          &$ 9.9 \cdot 10^{-8}$          &$ 1.5 \cdot 10^{-8}$          &$ 2.9 \cdot 10^{-8}$          &$ 5.7 \cdot 10^{-8}$          &$ 9.5 \cdot 10^{-8}$          &$ 4.8 \cdot 10^{-4}$          &$ 9.7 \cdot 10^{-4}$          \\ \hline
    \end{tabular}
    \end{center}}
    \caption{\textbf{Example 1.} Numerical rank and errors for various algorithms and choices of $\Omega$.}

    \label{table:discrete-affine-system-Omega}
    \end{table}
\vspace{-0.3in}

\begin{table}[!h] 
        
    {\tiny
    \begin{center}
    \begin{tabular}
        {|p{4.3em}|p{1.2em}       |p{0.5em}|p{0.5em}     |p{4.3em}|p{4.3em}    |p{5em}|p{5em}          |p{5em}|p{5em}    |p{5em}|p{5em}    |}
        \hline
        \multirow{2}{*}{$\Omega$}     &\multirow{2}{*}{$M$}          &\multirow{2}{*}{$r$}          &\multirow{2}{*}{$\hat r$}     &\multicolumn{2}{l|}{Prony}                                &\multicolumn{2}{l|}{Matrix Pencil LS}                        &\multicolumn{2}{l|}{Matrix Pencil SVD}                       &\multicolumn{2}{l|}{ESPRIT}                                  \\ \cline{5-12}
                                      &                              &                              &                              &RMSE                          &INE                           &RMSE                          &INE                           &RMSE                          &INE                           &RMSE                          &INE                           \\ \hline
        $\{1,2,4,7\}$                 &24                            &6                             &6                             &$ 5.5 \cdot 10^{-8}$          &$ 9.9 \cdot 10^{-8}$          &$ 1.5 \cdot 10^{-8}$          &$ 2.9 \cdot 10^{-8}$          &$ 5.7 \cdot 10^{-8}$          &$ 9.5 \cdot 10^{-8}$          &$ 4.8 \cdot 10^{-4}$          &$ 9.7 \cdot 10^{-4}$          \\ \cline{5-12}
        $\{1,2,4,7\}$                 &32                            &6                             &6                             &$ 8.7 \cdot 10^{-8}$          &$ 1.5 \cdot 10^{-7}$          &$ 9.1 \cdot 10^{-8}$          &$ 1.7 \cdot 10^{-7}$          &$ 6.2 \cdot 10^{-8}$          &$ 1.2 \cdot 10^{-7}$          &$ 4.3 \cdot 10^{-6}$          &$ 8.7 \cdot 10^{-6}$          \\ \cline{5-12}
        $\{1,2,4,7\}$                 &40                            &6                             &6                             &$ 1.1 \cdot 10^{-7}$          &$ 2.2 \cdot 10^{-7}$          &$ 7.6 \cdot 10^{-8}$          &$ 1.5 \cdot 10^{-7}$          &$ 1.3 \cdot 10^{-7}$          &$ 2.5 \cdot 10^{-7}$          &$ 8.7 \cdot 10^{-8}$          &$ 1.7 \cdot 10^{-7}$          \\ \cline{5-12}
        $\{1,2,4,7\}$                 &48                            &6                             &6                             &$ 1.1 \cdot 10^{-7}$          &$ 2.0 \cdot 10^{-7}$          &$ 8.3 \cdot 10^{-8}$          &$ 1.6 \cdot 10^{-7}$          &$ 1.1 \cdot 10^{-7}$          &$ 2.1 \cdot 10^{-7}$          &$ 8.3 \cdot 10^{-8}$          &$ 1.6 \cdot 10^{-7}$          \\ \hline
    \end{tabular}
    
    \end{center}}
    \caption{\textbf{Example 1.} Errors for various algorithms and choices of $M$.}

    \label{table:discrete-affine-system-M}
    \end{table}

\medskip
\paragraph{Dynamical processes on graphs.} Graph learning arises in a wide range of  applications. 
We consider a weighted {graph} $\mathcal{G}=\mathcal{(V,E,W)}$ in which $\mathcal{V}=\{v_1,\cdots, v_d\}$ is  set of $d$ vertices and $ \mathcal{E} \subset\mathcal{ V\times V}$ is a set of edges.  The weighted adjacent matrix $\mathcal{W}$ is defined as 
\begin{equation}
\mathcal{W}(i,j)\triangleq 
\begin{cases}
   \alpha_{ij}&\mbox{if  the directed pair $(v_i, v_j) \in \mathcal{E}$}\\
       0 &\mbox{otherwise}
\end{cases}; \alpha_{ij} \in \mathbb{R}_{+}; \forall v_i,v_j \in \mathcal{V}.
\end{equation}

 The {degree} $\mathrm{deg}(v_i)$ of a vertex $v_i \in \mathcal{V}$ is defined as $\mathrm{deg}(v_i)=\sum_{j=1}^{n} \mathcal{W}(i,j)$.   In the following, we introduce important operators associated with the graph $\mathcal{G}$.

\begin{definition}The normalized diffusion operator of a graph $\mathcal{G}$ with the weighted adjacent matrix $\mathcal{W} \in \mathbb{R}^{d \times d}$ is  defined by 
$\mathcal{A} \triangleq  (\mathcal{D}^{-1})^{\frac{1}{2}}\mathcal{W}  (\mathcal{D}^{-1})^{\frac{1}{2}}$, where $\mathcal{D} :=\mathrm{diag}(\mathrm{deg}(v_i))_{v_i\in \mathcal{V}}$ and $\mathcal{D}^{-1}$ denote its psedoinverse.   The  normalized graph Laplacian operator  is $\mathcal{L} = I-\mathcal{A}$. The random walk transition matrix  $\mathcal{P}$ is defined by $ \mathcal{D}^{-1}\mathcal{W}$.
\end{definition}

Note that if the  vetex $v_i$ in a graph is isolated, then the degree matrix $\mathcal{D}(i,i)=0$. In this case, we use psuedo-inverse of $\mathcal{D}$ to calculate the transition matrix $\mathcal{P}$ and set $\mathcal{P}(i,i)=1$.

\paragraph{(a) Random walk over graphs.} A random walk on graph is a  dynamical process comprised of a series of random steps by moving to an adjacent vertex at each step:  if $v(t)$ represents the vertex of the random walk at the timestep $t$ then we moves to the next one $v({t+1})$ by picking one of its neighbours with probability, 
\begin{equation}
\mathbb{P}(v(t+1)|v(t)) = \begin{cases}\frac{1}{\mathrm{deg}(v(t))},\quad\text{if $(v(t),v(t+1))\in \mathcal E$,}\\
    0,\quad\text{otherwise,}
\end{cases}
\end{equation}
where $\mathrm{deg}(v(t))$ denotes the number of edges starting from $v(t)$ in digraph $\mathcal{G}$. Let $x_t$ denote the probability distribution  at time $t$ $$x_t(i) = \mathbb{P}(v(t)= v_i).$$

By rule of random walk, we have the following linear evolution system,
$$x_t =A^tx_0, A=\mathcal{P}^T.$$

The eigenvalues of $A$ reveals useful information about the underlying graph: the multiplicity of eigenvalue $1$ is equal to the number of  (strongly) connected components; the second largest eigenvalue $\lambda_2$ that describes the mixing rate of the random walks; the spectral gap $|\lambda_1-\lambda_2|$ represents how well the graph is connected. We refer the readers to the book \cite{chung1997spectral} for more connections between them.

    
\paragraph{Example 2: a directed unweighted graph. } We consider a simple directed unweighted graph of 20 nodes. Its weighted adjacent matrix $\mathcal{W}$ (its nonzero entries are all 1s) is randomly generated with 80 edges and we then remove the self-loops. The initial state $x_0$ is a non-degenerate discrete probability distribution on $\{1,\cdots,20\}$. The matrix $A$ in our example is an invertible  diagonalizable matrix with 20 distinct eigenvalues and its eigen-matrix $U$ is entrywise nonzero. We summarize the reconstruction results below. In particular, we recovered the multiplicity of 1 in this case,  indicating that the graph has only 1 strongly connected component.

\begin{figure}[H]
    \includegraphics[width=0.4\linewidth]{./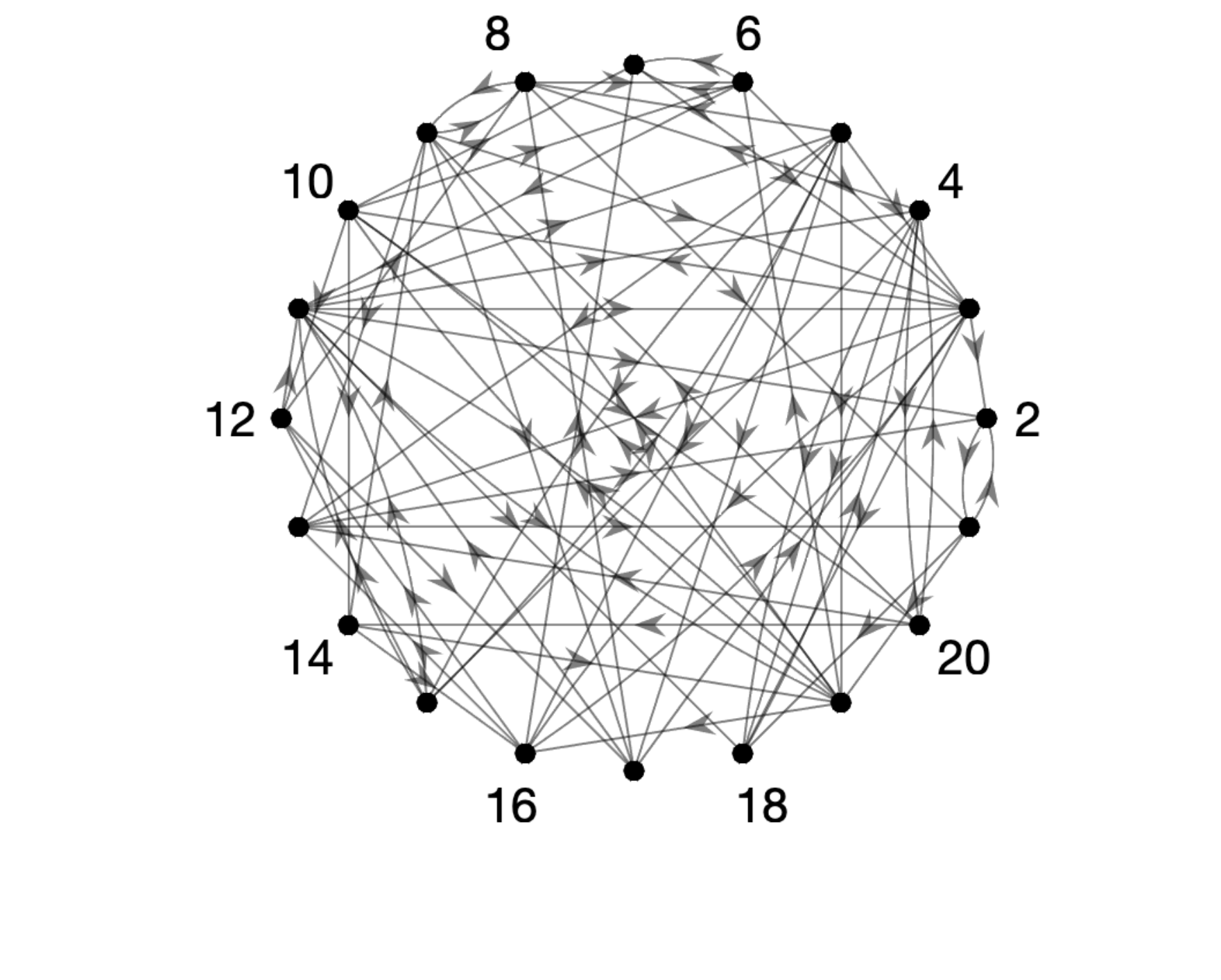}
    \includegraphics[width=0.6\linewidth]{./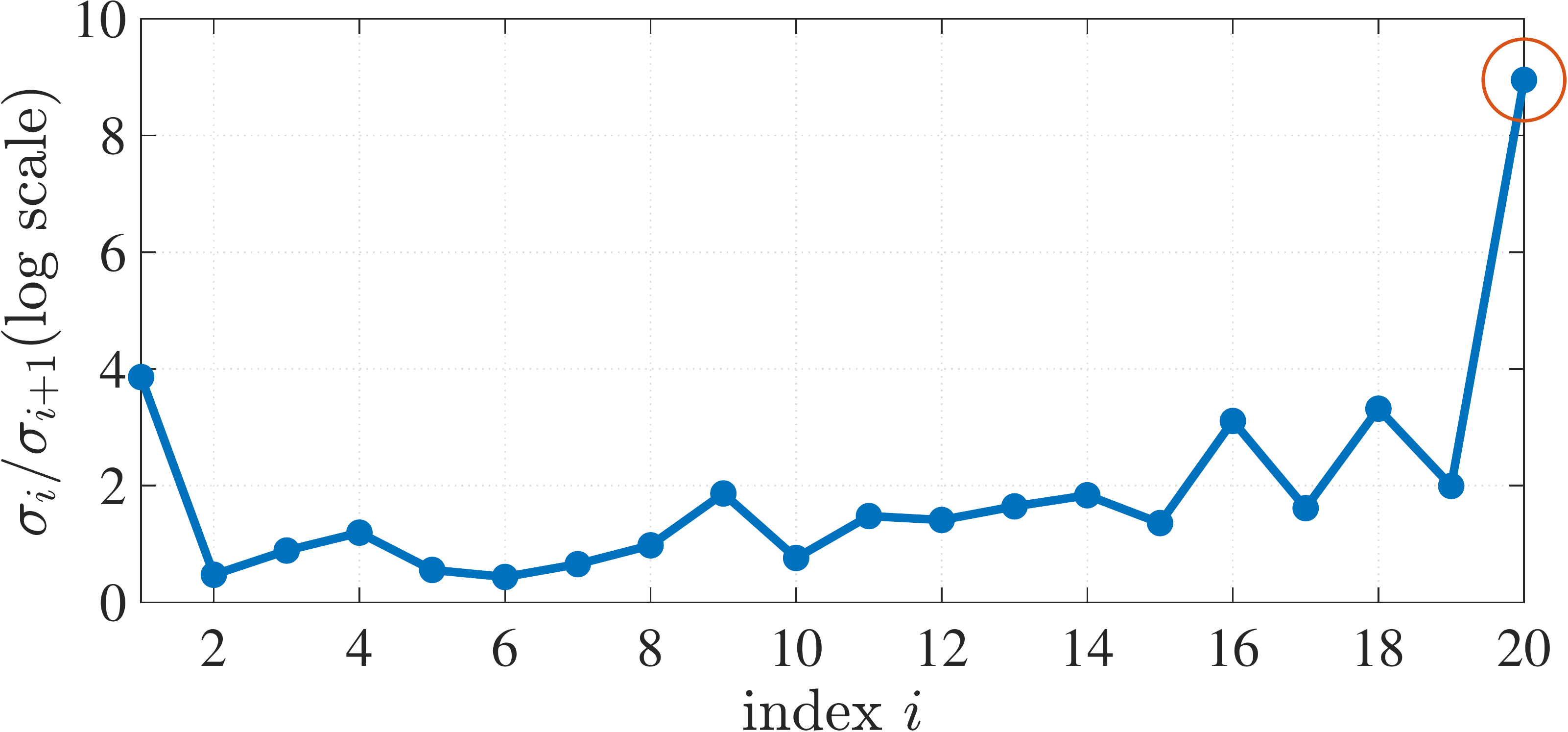}
    \caption{\textbf{Example 2.} (Left) An illustration of vertices and edges in the directed unweighted graph. (Right) Spectrum quotient of Hankel-type matrix for the choice of $\Omega=\{1,2,\dots,7\}$. In this case, 20 eigenvalues are $\Omega$-recoverable. }
\end{figure}

\begin{table}[!h] 
    {\tiny
    \begin{center}
    \begin{tabular}
    {|p{5em}|p{1.2em}       |p{0.5em}     |p{4.3em}|p{4.3em}    |p{5em}|p{5em}          |p{4.3em}|p{4.3em}    |p{4.3em}|p{4.3em}    |}
    \hline
    \multirow{2}{*}{$\Omega$}     &\multirow{2}{*}{$M$}          &\multirow{2}{*}{$\hat r$}     &\multicolumn{2}{l|}{Prony}                                   &\multicolumn{2}{l|}{Matrix Pencil}                           &\multicolumn{2}{l|}{Matrix Pencil SVD}                       &\multicolumn{2}{l|}{ESPRIT}                                  \\ \cline{4-11}
                                  &                              &                              &RMSE                          &INE                           &RMSE                          &INE                           &RMSE                          &INE                           &RMSE                          &INE                           \\ \hline
    $\{1\}$                       &60                            &10                            &$ 2.8 \cdot 10^{-2}$          &$ 8.1 \cdot 10^{-2}$          &$ 4.0 \cdot 10^{-5}$          &$ 1.3 \cdot 10^{-4}$          &$ 2.8 \cdot 10^{-2}$          &$ 8.1 \cdot 10^{-2}$          &$ 2.7 \cdot 10^{-2}$          &$ 7.8 \cdot 10^{-2}$          \\ \cline{4-11}
    $\{1,2\}$                     &60                            &19                            &$ 7.6 \cdot 10^{-3}$          &$ 3.3 \cdot 10^{-2}$          &$ 4.2 \cdot 10^{-5}$          &$ 1.8 \cdot 10^{-4}$          &$ 7.6 \cdot 10^{-3}$          &$ 3.3 \cdot 10^{-2}$          &$ 1.5 \cdot 10^{-1}$          &$ 3.1 \cdot 10^{-1}$          \\ \cline{4-11}
    $\{1,2,3\}$                   &60                            &20                            &$ 7.3 \cdot 10^{-7}$          &$ 2.0 \cdot 10^{-6}$          &$ 7.1 \cdot 10^{-6}$          &$ 3.1 \cdot 10^{-5}$          &$ 1.2 \cdot 10^{-6}$          &$ 5.1 \cdot 10^{-6}$          &$ 8.9 \cdot 10^{-2}$          &$ 1.8 \cdot 10^{-1}$          \\ \cline{4-11}
    $\{1,2,3,4\}$                 &60                            &20                            &$ 2.0 \cdot 10^{-7}$          &$ 7.7 \cdot 10^{-7}$          &$ 6.3 \cdot 10^{-7}$          &$ 2.7 \cdot 10^{-6}$          &$ 1.1 \cdot 10^{-7}$          &$ 3.9 \cdot 10^{-7}$          &$ 6.7 \cdot 10^{-2}$          &$ 1.3 \cdot 10^{-1}$          \\ \hline
    \end{tabular}
    \end{center}}
    \label{table:randwalk}
    \caption{\textbf{Example 2.} Numerical rank and errors of various algorithms and choices of $\Omega$.}
\end{table}

\begin{table}[!h] 
    {\tiny
    \begin{center}
    \begin{tabular}
    {|p{5em}|p{1.2em}       |p{0.5em}     |p{4.3em}|p{4.3em}    |p{5em}|p{5em}          |p{4.3em}|p{4.3em}    |p{4.3em}|p{4.3em}    |}
    \hline
    \multirow{2}{*}{$\Omega$}     &\multirow{2}{*}{$M$}          &\multirow{2}{*}{$\hat r$}     &\multicolumn{2}{l|}{Prony}                                   &\multicolumn{2}{l|}{Matrix Pencil}                           &\multicolumn{2}{l|}{Matrix Pencil SVD}                       &\multicolumn{2}{l|}{ESPRIT}                                  \\ \cline{4-11}
                                  &                              &                              &RMSE                          &INE                           &RMSE                          &INE                           &RMSE                          &INE                           &RMSE                          &INE                           \\ \hline
    $\{1,2,3,4\}$                 &60                            &20                            &$ 2.0 \cdot 10^{-7}$          &$ 7.7 \cdot 10^{-7}$          &$ 6.3 \cdot 10^{-7}$          &$ 2.7 \cdot 10^{-6}$          &$ 1.1 \cdot 10^{-7}$          &$ 3.9 \cdot 10^{-7}$          &$ 6.7 \cdot 10^{-2}$          &$ 1.3 \cdot 10^{-1}$          \\ \cline{4-11}
    $\{1,2,3,4\}$                 &80                            &20                            &$ 2.7 \cdot 10^{-7}$          &$ 1.1 \cdot 10^{-6}$          &$ 1.3 \cdot 10^{-7}$          &$ 5.8 \cdot 10^{-7}$          &$ 4.9 \cdot 10^{-7}$          &$ 2.2 \cdot 10^{-6}$          &$ 6.3 \cdot 10^{-2}$          &$ 1.3 \cdot 10^{-1}$          \\ \cline{4-11}
    $\{1,2,3,4\}$                 &100                           &20                            &$ 1.3 \cdot 10^{-6}$          &$ 5.7 \cdot 10^{-6}$          &$ 2.5 \cdot 10^{-7}$          &$ 1.0 \cdot 10^{-6}$          &$ 6.1 \cdot 10^{-7}$          &$ 1.7 \cdot 10^{-6}$          &$ 5.4 \cdot 10^{-2}$          &$ 1.2 \cdot 10^{-1}$          \\ \cline{4-11}
    $\{1,2,3,4\}$                 &120                           &20                            &$ 8.5 \cdot 10^{-7}$          &$ 3.7 \cdot 10^{-6}$          &$ 2.9 \cdot 10^{-6}$          &$ 1.3 \cdot 10^{-5}$          &$ 1.7 \cdot 10^{-6}$          &$ 7.6 \cdot 10^{-6}$          &$ 4.6 \cdot 10^{-2}$          &$ 8.8 \cdot 10^{-2}$          \\ \hline
    \end{tabular}
    \end{center}}
    \label{table:randwalk}
    \caption{\textbf{Example 2.} Errors of various algorithms and choices of $M$.}
\end{table}

    \paragraph{Example 3. Ring Graph\cite{perraudin2016gspbox}.}
    An undirected graph with 30 nodes uniformly distributed on a ring-shaped structure and each vertex has 8 neighbors. The edge weights are all equal to 1. The matrix $A$ in this example is a diagonalizable matrix with 13 distinct eigenvalues.  Unlike Example 1,  in this case, there is no repeating roots in the annihilating polynomial and therefore we can not recover their geometric multiplicities by our algorithms. We summarize the results below. 
    
    \begin{figure}[H]
    \includegraphics[width=0.4\linewidth]{./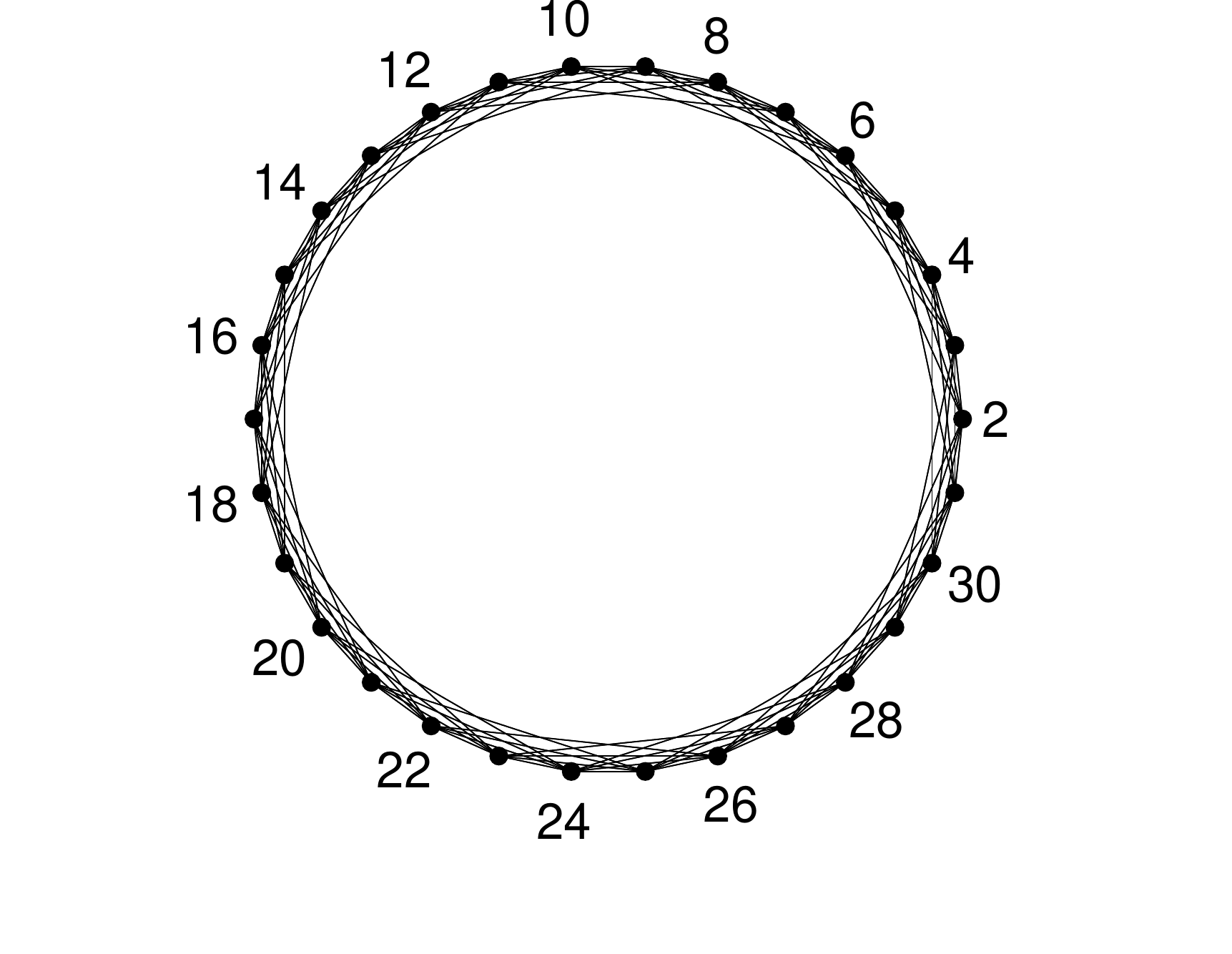}
    \includegraphics[width=0.6\linewidth]{./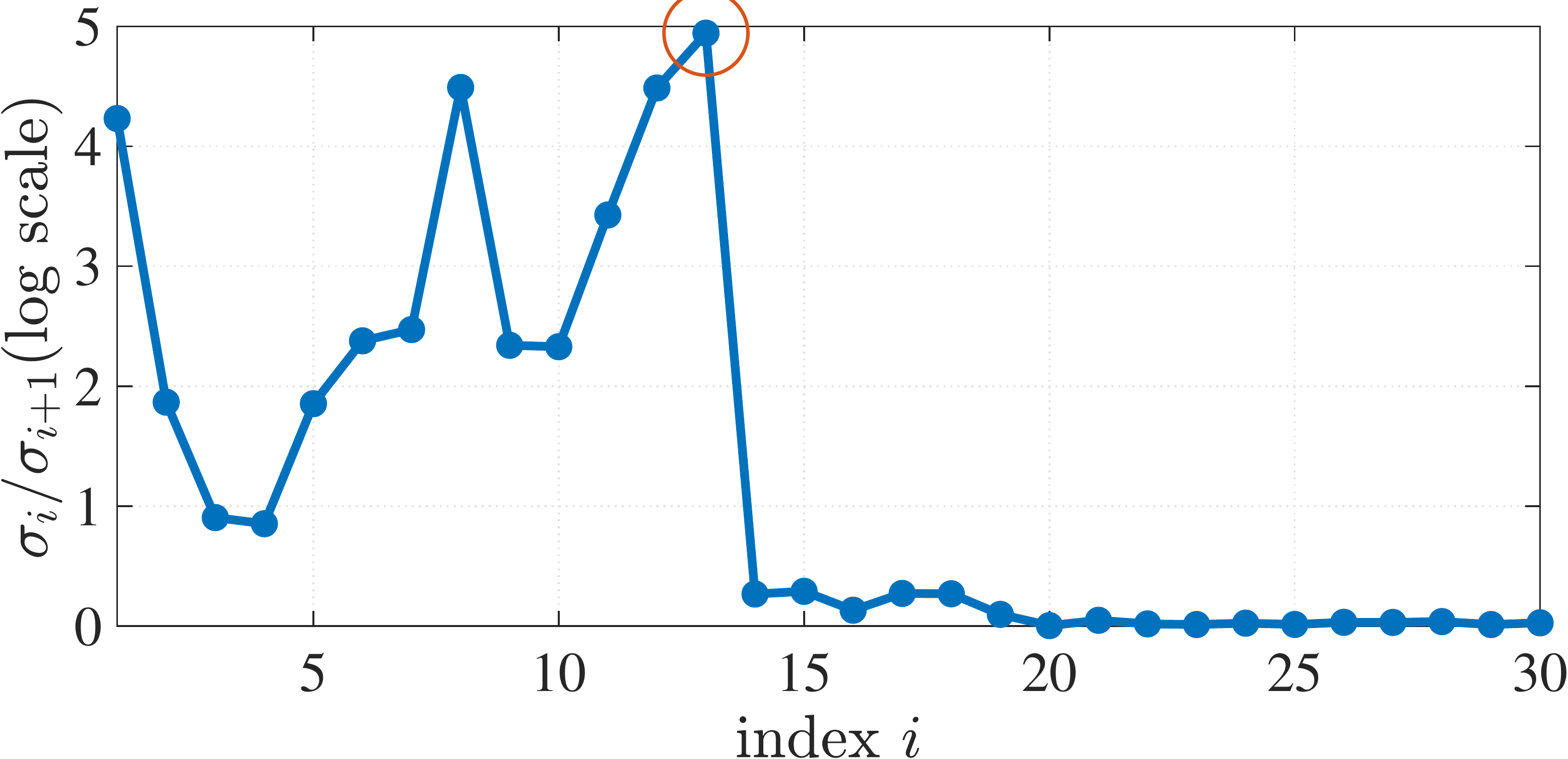}
    \caption{\textbf{Example 3.} (Left) An illustration of vertices and edges in the ring-shaped graph. (Right) Spectrum quotient of Hankel-type matrix for the choice of $\Omega=\{1,2,\dots,5\}$. In this case, 13 eigenvalues are $\Omega$-recoverable. }
    \end{figure}

\begin{table}[!h] 
    {\tiny
    \begin{center}
    \begin{tabular}
    {|p{5em}|p{1.2em}       |p{0.5em}     |p{4.3em}|p{4.3em}    |p{5em}|p{5em}          |p{4.3em}|p{4.3em}    |p{4.3em}|p{4.3em}    |}
        \hline
        \multirow{2}{*}{$\Omega$}     &\multirow{2}{*}{$M$}          &\multirow{2}{*}{$\hat r$}     &\multicolumn{2}{l|}{Prony}                                   &\multicolumn{2}{l|}{Matrix Pencil}                           &\multicolumn{2}{l|}{Matrix Pencil SVD}                       &\multicolumn{2}{l|}{ESPRIT}                                  \\ \cline{4-11}
                                      &                              &                              &RMSE                          &INE                           &RMSE                          &INE                           &RMSE                          &INE                           &RMSE                          &INE                           \\ \hline
        $\{1\}$                       &90                            &6                             &$ 6.5 \cdot 10^{-3}$          &$ 2.7 \cdot 10^{-2}$          &$ 1.8 \cdot 10^{-3}$          &$ 7.2 \cdot 10^{-3}$          &$ 6.0 \cdot 10^{-3}$          &$ 2.4 \cdot 10^{-2}$          &$ 5.7 \cdot 10^{-3}$          &$ 2.2 \cdot 10^{-2}$          \\ \cline{4-11}
        $\{1,2\}$                     &90                            &7                             &$ 4.0 \cdot 10^{-3}$          &$ 1.9 \cdot 10^{-2}$          &$ 2.2 \cdot 10^{-3}$          &$ 1.2 \cdot 10^{-2}$          &$ 4.2 \cdot 10^{-3}$          &$ 2.0 \cdot 10^{-2}$          &$ 1.5 \cdot 10^{-2}$          &$ 5.4 \cdot 10^{-2}$          \\ \cline{4-11}
        $\{1,2,3\}$                   &90                            &13                            &$ 3.7 \cdot 10^{-6}$          &$ 1.4 \cdot 10^{-5}$          &$ 1.5 \cdot 10^{-5}$          &$ 7.1 \cdot 10^{-5}$          &$ 1.2 \cdot 10^{-5}$          &$ 4.7 \cdot 10^{-5}$          &$ 5.2 \cdot 10^{-2}$          &$ 1.3 \cdot 10^{-1}$          \\ \cline{4-11}
        $\{1,2,3,4\}$                 &90                            &12                            &$ 3.6 \cdot 10^{-3}$          &$ 1.2 \cdot 10^{-2}$          &$ 1.3 \cdot 10^{-5}$          &$ 6.2 \cdot 10^{-5}$          &$ 4.1 \cdot 10^{-3}$          &$ 1.7 \cdot 10^{-2}$          &$ 1.6 \cdot 10^{-2}$          &$ 5.4 \cdot 10^{-2}$          \\ \cline{4-11}
        $\{1,2,3,4,5\}$               &90                            &13                            &$ 1.8 \cdot 10^{-7}$          &$ 8.4 \cdot 10^{-7}$          &$ 2.3 \cdot 10^{-6}$          &$ 1.2 \cdot 10^{-5}$          &$ 1.7 \cdot 10^{-6}$          &$ 8.8 \cdot 10^{-6}$          &$ 1.8 \cdot 10^{-2}$          &$ 6.3 \cdot 10^{-2}$          \\ \hline
    \end{tabular}
    \end{center}}
    \label{table:randwalk}
    \caption{\textbf{Example 3.} Numerical rank and errors of various algorithms and choices of $\Omega$.}
\end{table}

\begin{table}[!h] 
        
    {\tiny
    \begin{center}
        \begin{tabular}
        {|p{5em}|p{1.2em}       |p{0.5em}     |p{4.3em}|p{4.3em}    |p{5em}|p{5em}          |p{4.3em}|p{4.3em}    |p{4.3em}|p{4.3em}    |}
            \hline
            \multirow{2}{*}{$\Omega$}     &\multirow{2}{*}{$M$}          &\multirow{2}{*}{$\hat r$}     &\multicolumn{2}{l|}{Prony}                                   &\multicolumn{2}{l|}{Matrix Pencil}                           &\multicolumn{2}{l|}{Matrix Pencil SVD}                       &\multicolumn{2}{l|}{ESPRIT}                               \\ \cline{4-11}
                                          &                              &                              &RMSE                          &INE                           &RMSE                          &INE                           &RMSE                          &INE                           &RMSE                          &INE                         \\ \hline
            $\{1,2,3,4,5\}$               &90                            &13                            &$ 1.8 \cdot 10^{-7}$          &$ 8.4 \cdot 10^{-7}$          &$ 2.3 \cdot 10^{-6}$          &$ 1.2 \cdot 10^{-5}$          &$ 1.7 \cdot 10^{-6}$          &$ 8.8 \cdot 10^{-6}$          &$ 1.8 \cdot 10^{-2}$          &$ 6.3 \cdot 10^{-2}$          \\ \cline{4-11}
            $\{1,2,3,4,5\}$               &120                           &13                            &$ 1.2 \cdot 10^{-6}$          &$ 5.9 \cdot 10^{-6}$          &$ 3.0 \cdot 10^{-6}$          &$ 1.4 \cdot 10^{-5}$          &$ 2.4 \cdot 10^{-6}$          &$ 1.2 \cdot 10^{-5}$          &$ 1.4 \cdot 10^{-2}$          &$ 4.8 \cdot 10^{-2}$          \\ \cline{4-11}
            $\{1,2,3,4,5\}$               &150                           &13                            &$ 2.2 \cdot 10^{-6}$          &$ 1.1 \cdot 10^{-5}$          &$ 1.0 \cdot 10^{-6}$          &$ 5.4 \cdot 10^{-6}$          &$ 3.1 \cdot 10^{-6}$          &$ 1.5 \cdot 10^{-5}$          &$ 1.1 \cdot 10^{-2}$          &$ 3.0 \cdot 10^{-2}$          \\ \cline{4-11}
            $\{1,2,3,4,5\}$               &180                           &13                            &$ 5.1 \cdot 10^{-7}$          &$ 2.1 \cdot 10^{-6}$          &$ 2.8 \cdot 10^{-6}$          &$ 1.4 \cdot 10^{-5}$          &$ 3.1 \cdot 10^{-6}$          &$ 1.6 \cdot 10^{-5}$          &$ 8.3 \cdot 10^{-3}$          &$ 2.4 \cdot 10^{-2}$          \\ \hline
        \end{tabular}
    
    \end{center}
    }
    \label{table:randwalk}
    \caption{\textbf{Example 3.} Errors of various algorithms and choices of $M$. }
\end{table}

\paragraph{(b) Heat diffusion process over graphs.}

We consider a non-homogeneous heat diffusion process over the graph $\mathcal{G}$ is  governed by  a continuous affine system
\begin{align}
x_{t+1}^{cont}&=e^{-t\mathcal{L}}x_t^{cont}+c, t\geq 0\label{linearsystem}.
\end{align} We observe the system at uniform time instances $0,\Delta t, 2\Delta t,\cdots$. 
In this subsection, we explore  the reconstruction of eigenvalues when the system matrix is relatively large. 

\paragraph{Example 4. Sphere Graph\cite{perraudin2016gspbox}.} An undirected graph with 150 nodes sampled on a hyper-sphere and each vertex is connected to its 10 nearest neighbors. In this example,   $x_t^{cont}(0)$ is randomly generated from the uniform distribution and $c$ is in the image of a random Gaussian vector under the map $e^{-t\mathcal{L}}$. We observe the system at $t_l=l\Delta t$ for $l=0,\cdots,M-1$ and $\Delta t=20$. The matrix $A = e^{-20\mathcal{L}}$ is approximately low rank: only 10 eigenvalues are greater than $10^{-3}$(see the left bottom panel of Figure \ref{spheregraph}).  The largest gap happens between 7th eigenvalue  0.0455 and 8th eigenvalue 0.0097. We summarize the spectral plot of  the numerical rank estimation (the top right panel of Figure \ref{spheregraph}) and  reconstruction results in the Table 7 (various $\Omega$) and Table 8 (various $M$). It shows that our algorithms can recover significant eigenvalues very well.  We also 
 investigate various choice of $\Delta t$. When $\Delta t$ is relatively small,  all eigenvalues of $A$  lie in $[0,1]$ and form clusters. Two eigenvalues in the same cluster are very close to each other and can be identified with the same value ''numerically". Below, we show that our algorithms can recover the representative eigenvalues in each cluster (the right bottom panel of Figure \ref{spheregraph}). 

\begin{figure}[H]
\includegraphics[width=0.4\linewidth]{./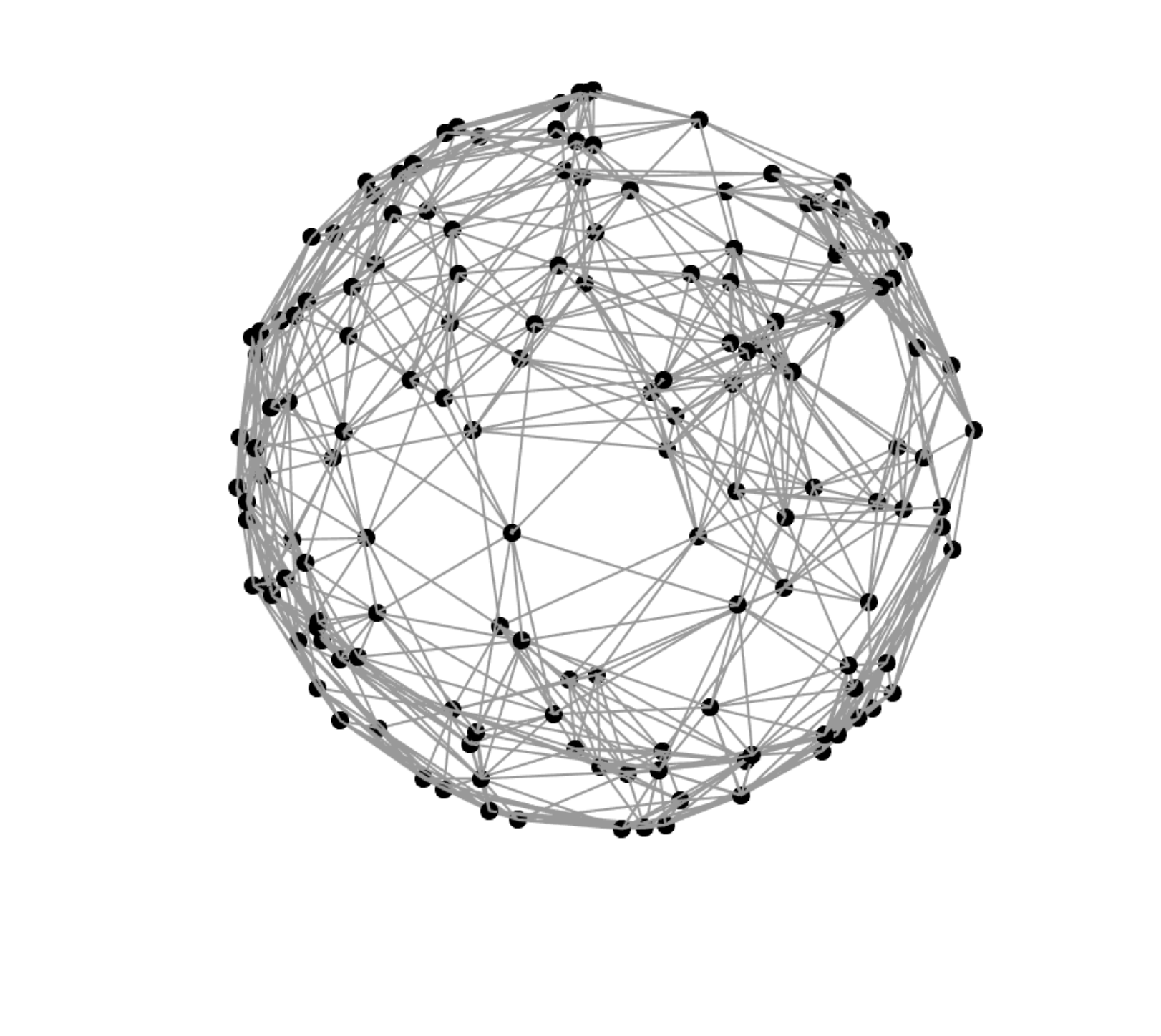}
\includegraphics[width=0.6\linewidth]{./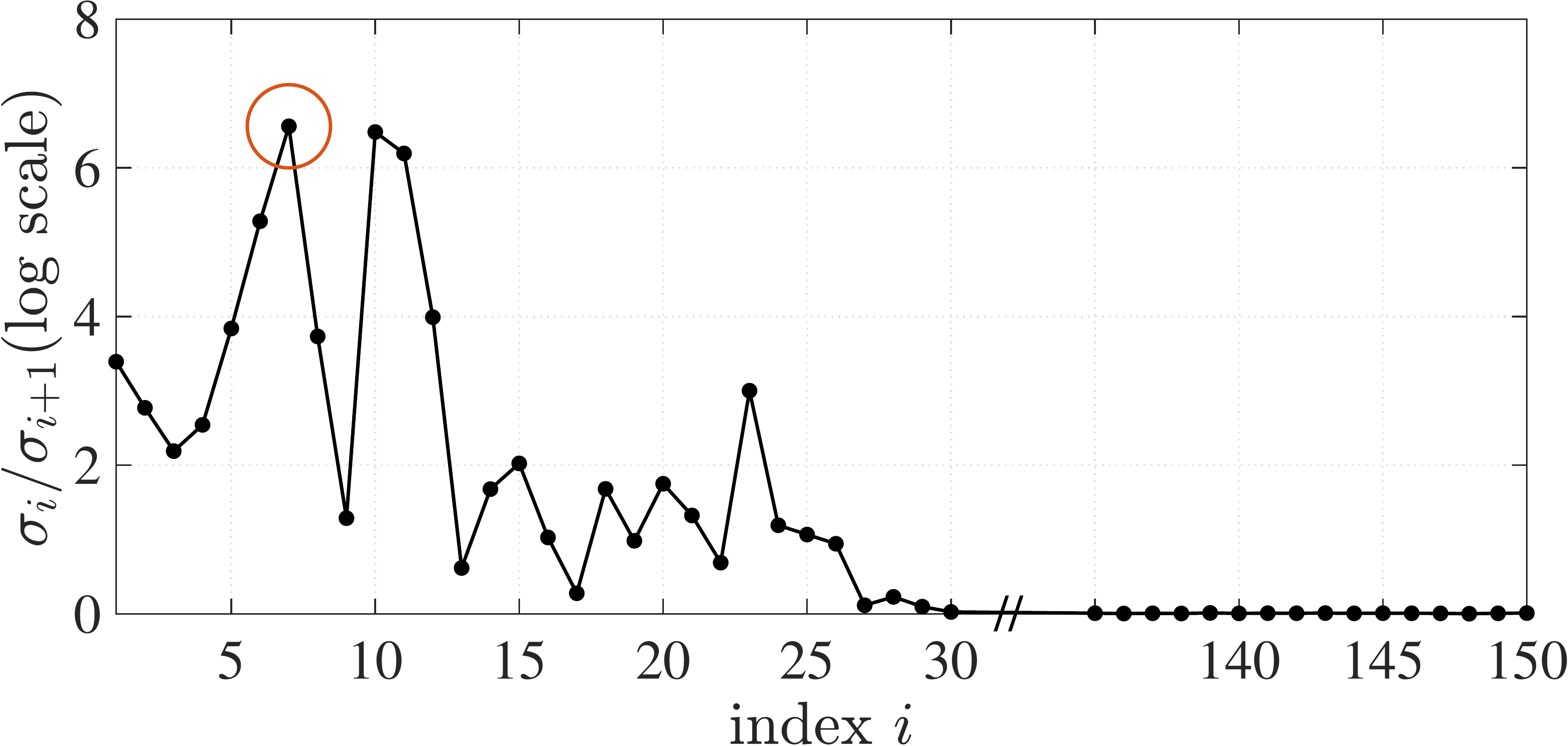}
\end{figure}
\vspace{-0.8cm}
\begin{figure}[H]
\centering
\includegraphics[width=0.35\linewidth]{./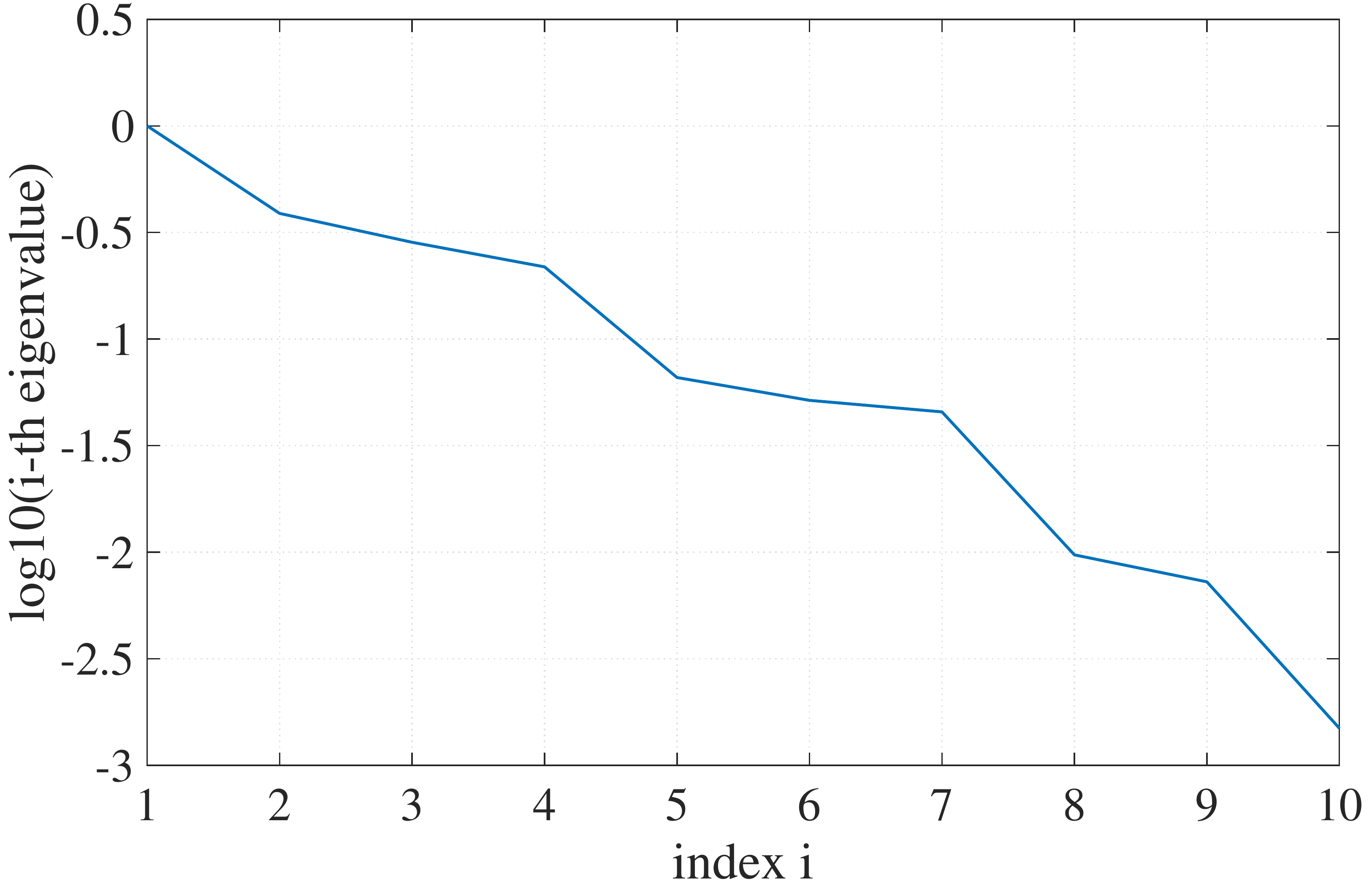}
\includegraphics[width=0.55\linewidth]{./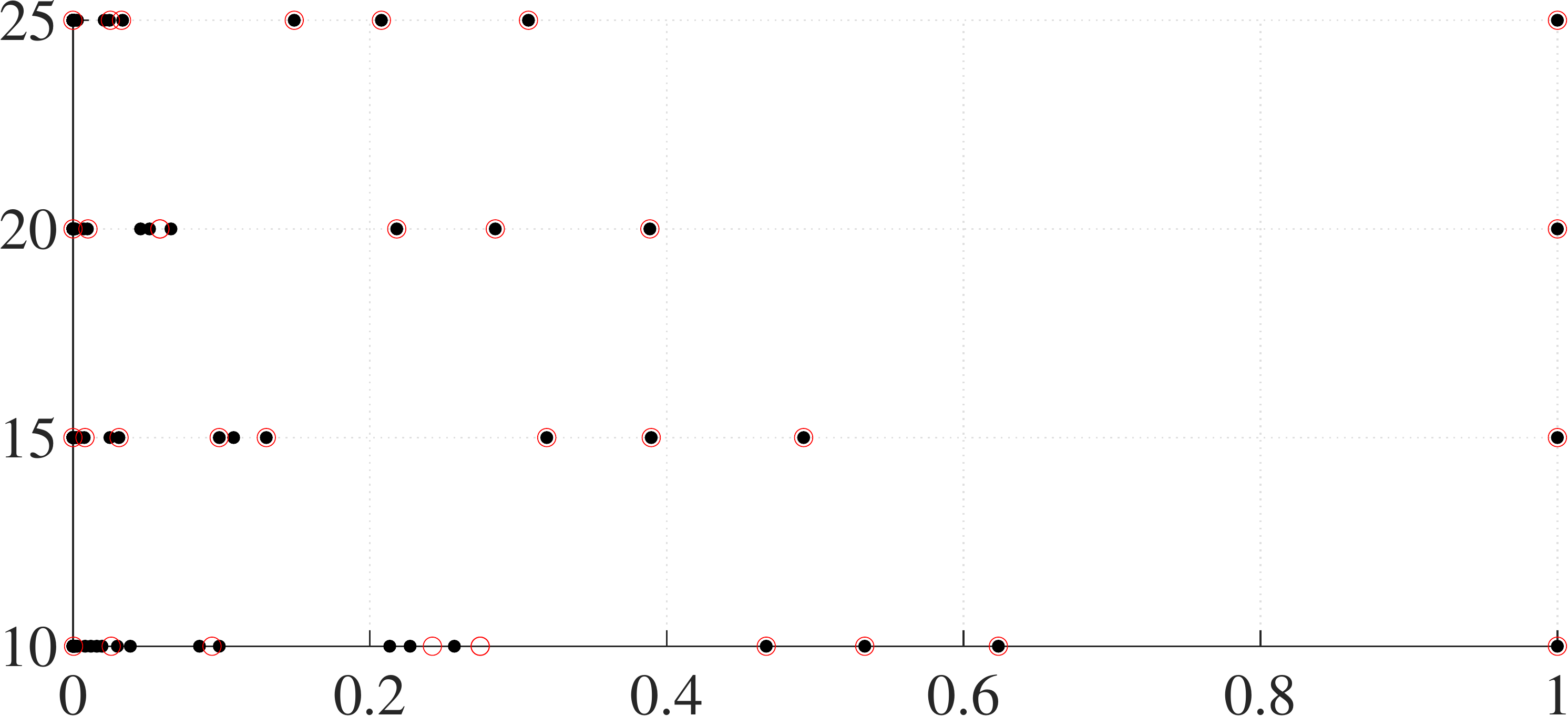}
\caption{\textbf{Example 4.} (Top Left) An illustration of vertices and edges in the sphere-shaped graph. (Top Right) Spectrum quotient of Hankel-type matrix for the choice of $\Omega=\{1,2,3,4\}$. In this case, 7 eigenvalues are $\Omega$-recoverable. (Bottom left)  The eigenvalue of $A$ when $\Delta t =20$.  (Bottom right)The ground truth spectrum (filled points) and reconstructed spectrum (hollow points) for  $\Delta t=10:5:25$ where we set the rank $\hat r=10$ in all cases.}\label{spheregraph}
\end{figure}

\begin{table}[!h] 
    {\tiny
    \begin{center}
    \begin{tabular}
    {|p{5em}|p{1.2em}       |p{0.5em}     |p{4.3em}|p{4.3em}    |p{5em}|p{5em}          |p{4.3em}|p{4.3em}    |p{4.3em}|p{4.3em}    |}
    \hline
\multirow{2}{*}{$\Omega$}     &\multirow{2}{*}{$M$}          &\multirow{2}{*}{$\hat r$}     &\multicolumn{2}{l|}{Prony}                                   &\multicolumn{2}{l|}{Matrix Pencil}                           &\multicolumn{2}{l|}{Matrix Pencil SVD}                       &\multicolumn{2}{l|}{ESPRIT}                                  \\ \cline{4-11}
                              &                              &                              &RMSE                          &INE                           &RMSE                          &INE                           &RMSE                          &INE                           &RMSE                          &INE                           \\ \hline
$\{1,2\}$                     &450                           &5                             &$ 8.8 \cdot 10^{-4}$          &$ 8.3 \cdot 10^{-3}$          &$ 2.4 \cdot 10^{-4}$          &$ 2.8 \cdot 10^{-3}$          &$ 8.8 \cdot 10^{-4}$          &$ 8.3 \cdot 10^{-3}$          &$ 7.3 \cdot 10^{-4}$          &$ 6.3 \cdot 10^{-3}$          \\ \cline{4-11}
$\{1,2,3\}$                   &450                           &6                             &$ 1.5 \cdot 10^{-4}$          &$ 1.8 \cdot 10^{-3}$          &$ 1.8 \cdot 10^{-3}$          &$ 1.6 \cdot 10^{-2}$          &$ 1.5 \cdot 10^{-4}$          &$ 1.8 \cdot 10^{-3}$          &$ 1.5 \cdot 10^{-4}$          &$ 1.8 \cdot 10^{-3}$          \\ \cline{4-11}
$\{1,2,3,4\}$                 &450                           &7                             &$ 7.8 \cdot 10^{-4}$          &$ 9.5 \cdot 10^{-3}$          &$ 6.8 \cdot 10^{-4}$          &$ 7.7 \cdot 10^{-3}$          &$ 7.8 \cdot 10^{-4}$          &$ 9.4 \cdot 10^{-3}$          &$ 7.6 \cdot 10^{-4}$          &$ 9.2 \cdot 10^{-3}$          \\ \cline{4-11}
$\{1,2,3,4,5\}$               &450                           &7                             &$ 8.1 \cdot 10^{-4}$          &$ 9.8 \cdot 10^{-3}$          &$ 9.4 \cdot 10^{-4}$          &$ 1.1 \cdot 10^{-2}$          &$ 8.1 \cdot 10^{-4}$          &$ 9.8 \cdot 10^{-3}$          &$ 7.8 \cdot 10^{-4}$          &$ 9.5 \cdot 10^{-3}$          \\ \cline{4-11}
\hline
    \end{tabular}
    \end{center}}
    \label{table:spherewalk}
    \caption{\textbf{Example 4.} Numerical rank and errors of various algorithms and choices of $\Omega$ for $\Delta t=20$.}
\end{table}

\begin{table}[!h] 
    {\tiny
    \begin{center}
    \begin{tabular}
    {|p{5em}|p{1.2em}       |p{0.5em}     |p{4.3em}|p{4.3em}    |p{5em}|p{5em}          |p{4.3em}|p{4.3em}    |p{4.3em}|p{4.3em}    |}
    \hline
 \multirow{2}{*}{$\Omega$}     &\multirow{2}{*}{$M$}          &\multirow{2}{*}{$\hat r$}     &\multicolumn{2}{l|}{Prony}                                   &\multicolumn{2}{l|}{Matrix Pencil}                           &\multicolumn{2}{l|}{Matrix Pencil SVD}                       &\multicolumn{2}{l|}{ESPRIT}                                  \\ \cline{4-11}
                              &                              &                              &RMSE                          &INE                           &RMSE                          &INE                           &RMSE                          &INE                           &RMSE                          &INE                           \\ \hline
$\{1,2,3,4,5\}$               &450                           &7                             &$ 8.1 \cdot 10^{-4}$          &$ 9.8 \cdot 10^{-3}$          &$ 9.4 \cdot 10^{-4}$          &$ 1.1 \cdot 10^{-2}$          &$ 8.1 \cdot 10^{-4}$          &$ 9.8 \cdot 10^{-3}$          &$ 7.8 \cdot 10^{-4}$          &$ 9.5 \cdot 10^{-3}$          \\ \cline{4-11}
$\{1,2,3,4,5\}$               &600                           &7                             &$ 8.1 \cdot 10^{-4}$          &$ 9.8 \cdot 10^{-3}$          &$ 9.4 \cdot 10^{-4}$          &$ 1.1 \cdot 10^{-2}$          &$ 8.1 \cdot 10^{-4}$          &$ 9.8 \cdot 10^{-3}$          &$ 7.8 \cdot 10^{-4}$          &$ 9.5 \cdot 10^{-3}$          \\ \cline{4-11}
$\{1,2,3,4,5\}$               &750                           &7                             &$ 8.1 \cdot 10^{-4}$          &$ 9.8 \cdot 10^{-3}$          &$ 9.4 \cdot 10^{-4}$          &$ 1.1 \cdot 10^{-2}$          &$ 8.1 \cdot 10^{-4}$          &$ 9.8 \cdot 10^{-3}$          &$ 7.8 \cdot 10^{-4}$          &$ 9.5 \cdot 10^{-3}$          \\ \cline{4-11}
$\{1,2,3,4,5\}$               &900                           &7                             &$ 8.1 \cdot 10^{-4}$          &$ 9.8 \cdot 10^{-3}$          &$ 9.4 \cdot 10^{-4}$          &$ 1.1 \cdot 10^{-2}$          &$ 8.1 \cdot 10^{-4}$          &$ 9.8 \cdot 10^{-3}$          &$ 7.8 \cdot 10^{-4}$          &$ 9.5 \cdot 10^{-3}$          \\ \cline{4-11}
\hline
    \end{tabular}
    \end{center}}
    \label{table:randwalk}
    \caption{\textbf{Example 4.} Errors of various algorithms and choices of $M$.}
\end{table}

In the last subsection, we consider the case where the time series data can be well-approximated by a discrete linear dynamical system governed by $A$. We use partial observations of the original time series data and compare the reconstruction results with the eigenvalues of $A$. 
\paragraph{Example 5. Non-linear LIP model.}
In this example, we consider a 3 dimensional discrete homogeneous dynamical system that serves as an approximation to the LIP model of influenza virus inflection model: 
\begin{equation}
    \begin{cases}
        \dot V = r I - c V \\
        \dot H = - \beta H V \\
        \dot I = \beta H V - \delta I
    \end{cases}
\end{equation}
As mentioned in paper \cite{duan2020identification}, we used rescaled model for convenience, with parameters $\beta=10.8$, $r=12$, $c=3$, $\delta=4$ and initial state $V(0) = 0.093/(4\times 10^5)$, $H(0)=1$, $I(0)=0$. We use Matlab built-in function ode45 to obtain the solution.

We consider the time series data when $t\in[1.8,2.1]$ as the underlying noise-free observations (with $\Delta t=0.01$), where the number of uninfected cells is rapidly decreasing due to the increasing number of infected cells and their released virus (see the top right panel of Figure \ref{Figure5}). We consider the least square approximation to obtain a discrete homogeneous linear system  as  
\begin{equation}
    A = \begin{bmatrix}\vline&\vline&&\vline\\ {x}_0&{x}_1&\cdots&{x}_{N-2}\\ \vline&\vline&&\vline\end{bmatrix}^\dagger\begin{bmatrix}\vline&\vline&&\vline\\ {x}_1&{x}_2&\cdots&{x}_{N-1}\\ \vline&\vline&&\vline\end{bmatrix}
\end{equation} 
where $x_t$ for $t=0,1,\dots,N-1$ are the discrete time-state observation and $N=30$. Then we simulate the linear dynamical system with the initial state as $x_0$ and the system matrix $A$. The relative mean squared error is of $O(1e-3)$. Algorithms 1-4 are performed with the original data from LIP, and we compare the estimated eigenvalues with those of $A$.  The result is presented as below.
\begin{figure}[!h]
    \includegraphics[width=0.65\linewidth]{./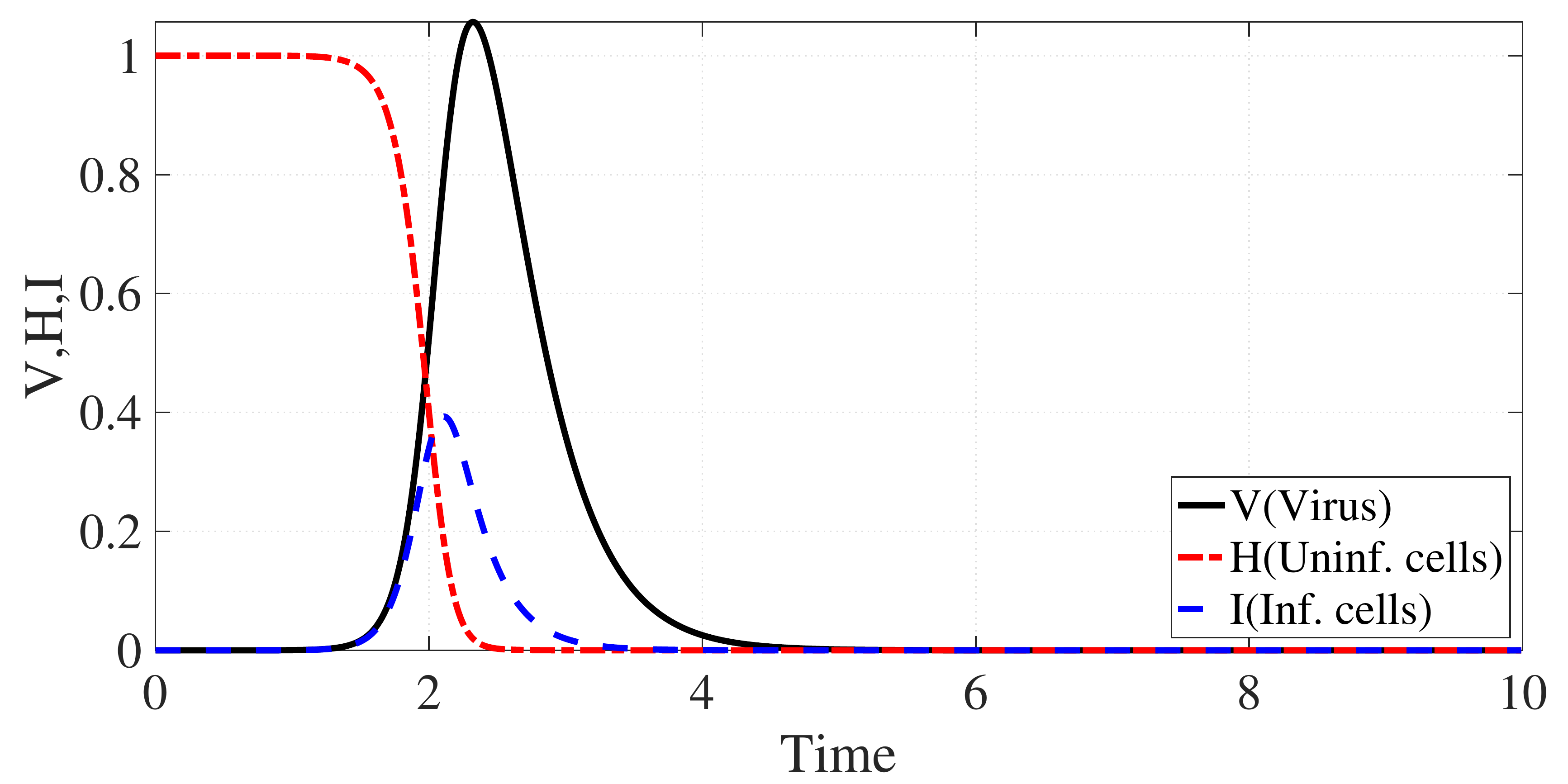}
    \includegraphics[width=0.3\linewidth]{./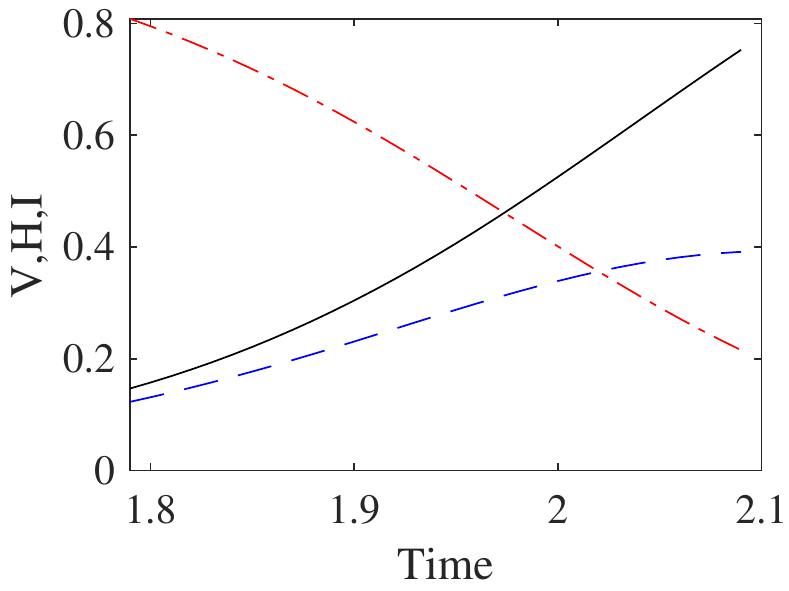}
    \centering
    \caption{\textbf{Example 5.} Virus(V) infects susceptible cells(H) with rate $\beta$. Infected cells are cleared with rate $\delta$. Once cells are productively infected (I), they release virus at rate $r$ and virus are cleared at rate $c$. The susceptible cells (red line) are rapidly infected while the virus (black line) and infected cells (blue line) peak at $t=2.2$ approximately. The viral growth is limited by the number of susceptible cells, decreasing the viral load and the number of infected cells to undetectable levels.}\label{Figure5}
\end{figure}

\begin{table}[H] 
    {\tiny
    \begin{center}
    \begin{tabular}
        {|p{4em}|p{2.2em}       |p{0.5em}     |p{4.3em}|p{4.3em}    |p{5em}|p{5em}          |p{4.3em}|p{4.3em}    |p{4.3em}|p{4.3em}    |}
        \hline
        \multirow{2}{*}{$\Omega$}     &\multirow{2}{*}{$M$}          &\multirow{2}{*}{$\hat r$}     &\multicolumn{2}{l|}{Prony}                                   &\multicolumn{2}{l|}{Matrix Pencil}                           &\multicolumn{2}{l|}{Matrix Pencil SVD}                       &\multicolumn{2}{l|}{ESPRIT}                                  \\ \cline{4-11}
                                      &                              &                              &RMSE                          &INE                           &RMSE                          &INE                           &RMSE                          &INE                           &RMSE                          &INE                           \\ \hline
        $\{1\}$                       &30                            &3                             &$ 7.7 \cdot 10^{-13}$         &$ 1.3 \cdot 10^{-12}$         &$ 2.7 \cdot 10^{-13}$         &$ 4.2 \cdot 10^{-13}$         &$ 9.7 \cdot 10^{-13}$         &$ 1.6 \cdot 10^{-12}$         &$ 8.8 \cdot 10^{-14}$         &$ 1.5 \cdot 10^{-13}$         \\ \cline{4-11}
        $\{1,2\}$                     &30                            &3                             &$ 1.4 \cdot 10^{-13}$         &$ 1.4 \cdot 10^{-13}$         &$ 1.1 \cdot 10^{-13}$         &$ 1.1 \cdot 10^{-13}$         &$ 7.7 \cdot 10^{-14}$         &$ 8.1 \cdot 10^{-14}$         &$ 5.5 \cdot 10^{-2}$          &$ 6.6 \cdot 10^{-2}$          \\ \cline{4-11}
        $\{1,2,3\}$                   &30                            &3                             &$ 1.8 \cdot 10^{-13}$         &$ 2.0 \cdot 10^{-13}$         &$ 4.4 \cdot 10^{-14}$         &$ 5.3 \cdot 10^{-14}$         &$ 1.7 \cdot 10^{-13}$         &$ 2.1 \cdot 10^{-13}$         &$ 6.4 \cdot 10^{-2}$          &$ 7.7 \cdot 10^{-2}$          \\ \hline
        
    \end{tabular}
        
    \end{center}}
    \caption{ \textbf{Example 5.} Numerical rank and  errors of various algorithms and choices of $\Omega$.}
    \label{tab:LIP}
\end{table}

\paragraph{Example 6. Human Walk Motion.}

In this example, we consider the captured motion of a walking human. \footnote{The data set was
obtained from mocap.cs.cmu.edu, which is the first trial of CMU Mocap subject 07 and created with funding
from NSF EIA-0196217.}  We used the one that is available in \href{https://github.com/luckystarufo/DMD_for_Human_motion}{Yuying Liu's github}. The  data set is of size $54 \times 316$, which is collected by 18 sensors and each sensor provides $(x,y,z)$ information of motion. The total number of time frames is 316 (the gap between each time frame is 1/120 seconds). We first normalize the real data set so their mean is 0 and standard deviation is 1.  We then apply the Dynamic Decomposition method  to obtain $A$. The relative mean square error is $O(10^{-3})$. We use the real motion data in our algorithm and compare the outputs with the eigenvalues of $A$.  We report its result as follows. We see that Matrix-Pencil-SVD algorithm has the best performance.

\begin{table}[H] 
    {\tiny
    \begin{center}
    \begin{tabular}
        {|p{5em}|p{1.2em}       |p{0.5em}     |p{4.3em}|p{4.3em}    |p{5em}|p{5em}          |p{4.3em}|p{4.3em}    |p{4.3em}|p{4.3em}    |}
        \hline
  \multirow{2}{*}{$\Omega$}     &\multirow{2}{*}{$M$}          &\multirow{2}{*}{$\hat r$}     &\multicolumn{2}{l|}{Prony}                                   &\multicolumn{2}{l|}{Matrix Pencil}                           &\multicolumn{2}{l|}{Matrix Pencil SVD}                       &\multicolumn{2}{l|}{ESPRIT}                                  \\ \cline{4-11}
                              &                              &                              &RMSE                          &INE                           &RMSE                          &INE                           &RMSE                          &INE                           &RMSE                          &INE                           \\ \hline
$\{1\}$                       &316                           &10                            &$ 2.2 \cdot 10^{-1}$          &$ 7.9 \cdot 10^{-1}$          &$ 2.3 \cdot 10^{-1}$          &$ 9.8 \cdot 10^{-1}$          &$ 1.6 \cdot 10^{-2}$          &$ 5.7 \cdot 10^{-2}$          &$ 2.0 \cdot 10^{-2}$          &$ 9.1 \cdot 10^{-2}$          \\ \cline{4-11}
$\{1,3,9\}$                   &316                           &12                            &$ 2.6 \cdot 10^{-1}$          &$ 8.3 \cdot 10^{-1}$          &$ 3.5 \cdot 10^{-1}$          &$ 9.7 \cdot 10^{-1}$          &$ 1.9 \cdot 10^{-2}$          &$ 7.2 \cdot 10^{-2}$          &$ 3.6 \cdot 10^{-2}$          &$ 1.6 \cdot 10^{-1}$          \\ \cline{4-11}
$\{1,2,3,9\}$                 &316                           &11                            &$ 2.4 \cdot 10^{-1}$          &$ 7.7 \cdot 10^{-1}$          &$ 3.5 \cdot 10^{-1}$          &$ 9.7 \cdot 10^{-1}$          &$ 2.6 \cdot 10^{-2}$          &$ 1.3 \cdot 10^{-1}$          &$ 2.2 \cdot 10^{-2}$          &$ 8.8 \cdot 10^{-2}$          \\ \cline{4-11}
$\{1,2,3,5,9\}$               &316                           &12                            &$ 2.4 \cdot 10^{-1}$          &$ 7.3 \cdot 10^{-1}$          &$ 3.5 \cdot 10^{-1}$          &$ 9.7 \cdot 10^{-1}$          &$ 2.6 \cdot 10^{-2}$          &$ 9.0 \cdot 10^{-2}$          &$ 3.2 \cdot 10^{-2}$          &$ 1.4 \cdot 10^{-1}$          \\ \cline{4-11}
\hline
    \end{tabular}
    \end{center}
        }
    \caption{\textbf{Example 6.} Root Mean square errors and infinity errors for various algorithms.  In this table, $\hat r$ is the estimator of algorithm 1 using (i) in step 2 with relative threshold $\epsilon=10^{-3}$.}
    \label{tab:human-motion}
\end{table}

\vspace{-0.3in}
\paragraph{Summary.}
In the case of noise-free data, we have seen that  all algorithms provide accurate reconstruction of  the eigenvalues  of various affine systems in $\Lambda$, matching our theoretical results developed in Section \ref{sec2:Main-Results}. They can also provide  faithful approximation of eigenvalues when the time series data  is approximately generated by a linear system. For comparison of algorithms, we found that the the Matrix Pencil method has the best performance in most cases when $|\Omega|$ is small and in the real human motion data, demonstrating its robustness.  When $|\Omega|$ increases and the data is exact, the Prony method has the best performance in most cases. In particular, 
\begin{itemize}
\item  As $|\Omega|$ increases, more eigenvalues could be recovered and the reconstruction accuracy got improved until $|\Omega|$ is sufficiently large. This is because larger  $|\Omega|$ would yield a larger matrix that is sensitive to perturbation caused by numerical round off errors.  

\item  For the temporal steps,  setting $M=3d$ is typically enough since the dynamical systems considered in our numerical examples have reached equilibrium (See the state plot in Example 1). In these cases, increasing more temporal samples does not bring any new information and did not help in improving the accuracy. An exception case is that the performance of ESPRIT still got improved.

\item The difficulty that affect the reconstruction accuracy for  large affine systems is caused by the clustering phenomenon of eigenvalues, i.e., the minimal gap between eigenvalues is close to zero. The round-off errors  prevented us from recovering  eigenvalues that prove in theory, but the numerical example in sphere graph shows that we are able to recover significant eigenvalues or representative eigenvalue in a cluster. 

\item Finally, we have also numerically demonstrate that the algebraic multiplicities of eigenvalues can be recovered when $A$ is similar to a Jordan matrix (see Example 1), while we lost this information when $A$ is diagonalizable (see Example 3). 
\end{itemize}

\section{Conclusion and future work}

This paper studied estimating the spectrum of affine systems from partial observations of a single trajectory data. We derived various characterizations on the interplay among the observational locations, the behavior of the dynamical systems, and the spectral properties of the system matrix for the recoverability of eigenvalues. We propose several algorithms, which allow the usage of space-time samples with flexible temporal length and have been applied to a wide variety of examples on both synthetic and real data sets

Several interesting questions are left for future investigations. First, we would like to address the ``optimal selection" of observational locations. Given a fixed number of sensors, some choices of locations perform better than others in terms of numerical stability. We would like to find a characterization. Second, devising  denoising algorithm when the observation data is corrupted by noise. One direction we would like to pursue is, when we have multiple observation locations, how to make use of the temporal correlation between them to denoise the Hankel-type matrix. Third, we only consider the recovery of eigenvalues in this paper. It would be interesting to explore when the corresponding eigenspace projections can also be recovered. This problem is related to the completion of the low-rank matrix. We would like to derive conditions under which the recovery is feasible and propose robust algorithms to find a faithful approximation to the original system. 

\section{Appendix}

	\begin{lemma}\label{lemma1}
	Let $A=UJU^{-1}$ be its Jordan decomposition as in \eqref{Jordan1} and $\{V_s\}_{s=1}^{n}$ be its corresponding invariant subspaces. For any $b \in \mathbb{C}^d$,   we have $q_b^A = \prod\limits_{s=1}^n q_{b_s}^A$, where $b_s= P(\lambda_s; A)b\in \mathrm{V}_s$ with $P(\lambda_s;A)$ the projection onto $V_s$ and  the polynomials $\{q_{b_s}^A\}_{s=1}^{n}$ are coprime with each other.
	\end{lemma}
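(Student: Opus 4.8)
The plan is to exploit the direct sum decomposition $\mathbb{C}^d = \oplus_{s=1}^n V_s$ into $A$-invariant generalized eigenspaces, together with the fact that the restriction $A|_{V_s} = \lambda_s I + N_s$ has $\lambda_s$ as its only eigenvalue and $N_s$ nilpotent of index at most $h_s$. First I would record two elementary facts. (i) For any polynomial $p$, the operator $p(A)$ commutes with each projection $P(\lambda_s;A)$ (this is clear since $P(\lambda_s;A)$ is itself a polynomial in $A$, being a spectral projection), so $P(\lambda_s;A)p(A)b = p(A)P(\lambda_s;A)b = p(A)b_s$; in particular $p(A)$ maps $V_s$ into $V_s$ and $p(A)b = \sum_{s=1}^n p(A)b_s$ is the decomposition of $p(A)b$ into its $V_s$-components, hence $p(A)b = 0$ if and only if $p(A)b_s = 0$ for every $s$. (ii) Since $(A-\lambda_s I)|_{V_s} = N_s$ is nilpotent, $(A-\lambda_s I)^{h_s}b_s = 0$, so $q_{b_s}^A$ divides $(z-\lambda_s)^{h_s}$ and therefore $q_{b_s}^A(z) = (z-\lambda_s)^{m_s}$ for some integer $0 \le m_s \le h_s$ (with the convention $q_{b_s}^A = 1$, $m_s = 0$, when $b_s = 0$).

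From (ii) and the hypothesis that $\lambda_1,\dots,\lambda_n$ are distinct, the polynomials $\{q_{b_s}^A\}_{s=1}^n$ are pairwise coprime, which establishes the coprimality claim. For the factorization I would argue by mutual divisibility. On one hand, $q_b^A(A)b = 0$, so by fact (i) $q_b^A(A)b_s = 0$ for each $s$, giving $q_{b_s}^A \mid q_b^A$; since the $q_{b_s}^A$ are pairwise coprime, their product $\prod_{s=1}^n q_{b_s}^A$ divides $q_b^A$. On the other hand, set $P = \prod_{s=1}^n q_{b_s}^A$. For each $t$ we have $q_{b_t}^A \mid P$, so $P(A)b_t = 0$; summing over $t$ and using fact (i) gives $P(A)b = 0$, whence $q_b^A \mid P$. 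Both polynomials are monic, so $q_b^A = P = \prod_{s=1}^n q_{b_s}^A$.

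I do not anticipate a genuine obstacle: the whole argument is a standard application of the primary (generalized eigenspace) decomposition, and the only point needing a little care is fact (i) — that $P(\lambda_s;A)$ commutes with every $p(A)$ and that the $V_s$-component of $p(A)b$ is exactly $p(A)b_s$ — which follows from $P(\lambda_s;A)$ being a polynomial in $A$ together with the directness of the sum. The remaining ``work'' is just bookkeeping with these projections and handling the convention $q_{b_s}^A = 1$ in the degenerate case $b_s = 0$.
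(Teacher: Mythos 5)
Your proposal is correct and follows essentially the same route as the paper's proof: both establish $q_{b_s}^A(z)=(z-\lambda_s)^{m_s}$ from the nilpotency of $(A-\lambda_s I)|_{V_s}$ (hence pairwise coprimality), and both conclude by mutual divisibility, using that $p(A)$ preserves the direct sum $\oplus_s V_s$ so that $p(A)b=0$ iff $p(A)b_s=0$ for all $s$. The paper phrases the divisibility steps in the language of annihilating ideals $\mathcal{I}^A_{b_s}=\langle q_{b_s}^A\rangle$, but this is only a cosmetic difference from your argument.
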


	\begin{proof}
		One one hand, we prove that $\prod \limits_{s=1}^n q_{b_s}^A$ divides $q_b^A$.
 Note the annihilating polynomials of $A$ of $b_s$ form an ideal generated by $q_{b_s}^A$:
		\begin{equation}\label{ideal1}
		\mathcal{I}^{A}_{b_s} := \{p(z) \in \mathbb{C}[z]\ |\ p(A)(b_s) = 0\} = \langle q_{b_s}^A\rangle.
		\end{equation}
		
	Denote by $A_{\big|V_s}$	 the restriction of $A$ on the invariant space $V_s$ and by $A_s$ its matrix form under standard basis.  Let $q^{A_s}(t) =(t-\lambda_s)^{r^{A_s}} $ denote the minimal polynomial of $A_s$ and then $q^{A_s}(A_s)\equiv 0$, which implies 
		
		\begin{equation}
		    q^{A_s}(A_s)b_s=0,
		\end{equation}
		so from the ideal (\ref{ideal1}) property it follows that $q_{b_s}^A$ divides $q^{A_s}$. So $q_{b_s}^{A}(z) = (z-\lambda_i)^{r_{b_s}^A}$. Since eigenvalues $\{\lambda_s\}_{s=1}^{n}$ are distinct, we know that $q_{b_s}^A$ are coprime to each other.
		
		Note that $\sum\limits_{s=1}^n q_{b}^A(A)b_s=q_b^A(A)b=0$ and $q_b^A(A)b_s \in V_s$. It follows that $q_{b}^A(A)b_s= 0$.  By the property of  ideal (\ref{ideal1}),  we  know that $q_{b_s}^A$ divides $q_b^A$ for $s=1,\cdots,n$. Therefore,  $\prod \limits_{s=1}^n q_{b_s}^A$ divides $q_b^A$.
		
		On the other hand, we show that $q_b^A$ divides $\prod \limits_{s=1}^n q_{b_s}^A$. Define 		
		\begin{equation}
		\mathcal{I}^{A}_{b} = \{p \in \mathbb{C}[z]\ |\ p(A)b = 0\} = \langle q_{b}^A\rangle.
		\end{equation}
		
		Note that 
		\begin{equation}
		    \prod \limits_{s=1}^n q_{b_s}^A(A)b= \prod \limits_{s=1}^n q_{b_s}^A(A)\sum\limits_{s=1}^{n} b_s =\sum\limits_{s=1}^{n} \prod \limits_{s=1}^n q_{b_s}^A(A) b_s = 0,
		\end{equation}
		it follows that $q_b^A$ divides $\prod \limits_{i=1}^n q_{b_i}^A$. Hence the conclusion follows. 
	\end{proof}

	\begin{lemma}\label{lemma2}
	For $S\in \mathbb{C}^{m\times d}$, we have that $q_{S,b}^A = \prod\limits_{s=1}^n q_{S,b_s}^A$ , where $q_{S,b_s}^A$ are coprime with each other. As a result, $r_{S,b}^A =  \sum \limits_{s=1}^n r_{S,b_s}^A$. 
	\end{lemma}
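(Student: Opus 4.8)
The plan is to mirror the proof of Lemma~\ref{lemma1} closely, now working with the $S$-altered data while carefully keeping $S$ at the far left of every expression. The first step is to replace the defining condition ``$Sq(A)\mathcal{K}_{r_S^A}(A,b)=\{0\}$'' by the equivalent condition ``$Sq(A)A^jb=0$ for every $j\ge 0$'', i.e.\ $Sq(A)\mathcal{K}_\infty(A,b)=\{0\}$. Writing $q_S^A(z)=z^{r_S^A}+\sum_{k<r_S^A}c_kz^k$, the identity $Sq_S^A(A)=0$ gives $SA^{r_S^A}=-\sum_{k<r_S^A}c_kSA^k$; since all powers of $A$ commute with $q(A)$ and $S$ multiplies only on the left, this lets one bootstrap any finite family of vanishings $Sq(A)A^jb=0$, $0\le j<r_S^A$, to all $j\ge 0$ by strong induction. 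Granting this, the set $\mathcal{I}_{S,b}^A:=\{q\in\mathbb{C}[z]:Sq(A)\mathcal{K}_\infty(A,b)=\{0\}\}$ is a nonzero ideal of $\mathbb{C}[z]$: it is a linear subspace, it contains $q_S^A$, and it is closed under multiplication by $z$ because $S(zq)(A)A^jb=Sq(A)A^{j+1}b$. Hence $\mathcal{I}_{S,b}^A=\langle q_{S,b}^A\rangle$, and likewise $\mathcal{I}_{S,b_s}^A=\langle q_{S,b_s}^A\rangle$ for each $s$.

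Next I would pin down each $q_{S,b_s}^A$. Since $q_{b_s}^A(A)b_s=0$ we get $Sq_{b_s}^A(A)A^jb_s=SA^jq_{b_s}^A(A)b_s=0$ for all $j$, so $q_{b_s}^A\in\mathcal{I}_{S,b_s}^A$; by Lemma~\ref{lemma1} $q_{b_s}^A(z)=(z-\lambda_s)^{r^A_{b_s}}$, so $q_{S,b_s}^A\mid(z-\lambda_s)^{r^A_{b_s}}$ and therefore $q_{S,b_s}^A(z)=(z-\lambda_s)^{m_s}$ with $0\le m_s\le r^A_{b_s}$. As $\lambda_1,\dots,\lambda_n$ are distinct, the polynomials $q_{S,b_1}^A,\dots,q_{S,b_n}^A$ are pairwise coprime, which is the coprimality claim. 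I would also record that $\mathcal{K}_\infty(A,b)=\bigoplus_s\mathcal{K}_\infty(A,b_s)$: the inclusion $\subseteq$ is clear from $A^jb=\sum_sA^jb_s$ with $A^jb_s\in V_s$, and for $\supseteq$ one uses the coprimality of the $q_{b_s}^A$ (Lemma~\ref{lemma1}) to pick $p_s\in\mathbb{C}[z]$ with $p_s(A)b=b_s$, so that $A^jb_s=A^jp_s(A)b\in\mathcal{K}_\infty(A,b)$.

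Finally I would prove the two divisibilities, exactly parallel to Lemma~\ref{lemma1}. For $\prod_sq_{S,b_s}^A\mid q_{S,b}^A$: since $A^jb_s\in\mathcal{K}_\infty(A,b)$, we have $Sq_{S,b}^A(A)A^jb_s=0$ for all $j$, so $q_{S,b}^A\in\mathcal{I}_{S,b_s}^A=\langle q_{S,b_s}^A\rangle$, giving $q_{S,b_s}^A\mid q_{S,b}^A$ for each $s$, and pairwise coprimality upgrades this to $\prod_sq_{S,b_s}^A\mid q_{S,b}^A$. For $q_{S,b}^A\mid\prod_sq_{S,b_s}^A$: put $P=\prod_sq_{S,b_s}^A$ and fix $t$; writing $P(A)=q_{S,b_t}^A(A)\bigl(\prod_{s\ne t}q_{S,b_s}^A\bigr)(A)$ and expanding $\bigl(\prod_{s\ne t}q_{S,b_s}^A\bigr)(A)A^jb_t$ as a linear combination of vectors $A^kb_t$, one gets $SP(A)A^jb_t=\sum_k(\text{scalar})\,Sq_{S,b_t}^A(A)A^kb_t=0$ for all $j$; summing over $t$ and using $b=\sum_tb_t$ yields $SP(A)A^jb=0$ for all $j$, i.e.\ $P\in\mathcal{I}_{S,b}^A=\langle q_{S,b}^A\rangle$, so $q_{S,b}^A\mid P$. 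As both polynomials are monic, $q_{S,b}^A=\prod_sq_{S,b_s}^A$, and comparing degrees gives $r_{S,b}^A=\sum_{s=1}^nr_{S,b_s}^A$. The only delicate point throughout is that $S$ does not commute with $q(A)$, so every rearrangement must keep $S$ as the leftmost factor while moving around only the mutually commuting polynomials in $A$; this is precisely what has to be handled correctly in the preliminary reduction and in the second divisibility, and I expect it to be the main (if minor) obstacle.
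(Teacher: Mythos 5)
Your proposal is correct and follows essentially the same route as the paper's proof: establish that the $(S,A,b_s)$-annihilators form an ideal (via the equivalence between annihilating the truncated Krylov space and the full one), deduce from Lemma~\ref{lemma1} that each $q_{S,b_s}^A$ is a power of $(z-\lambda_s)$ and hence that they are pairwise coprime, and then prove the two divisibilities. The only (minor) variation is in the first divisibility, where you use a CRT-normalized polynomial with $p_s(A)b=b_s$ to place $\mathcal{K}_\infty(A,b_s)$ inside $\mathcal{K}_\infty(A,b)$ directly, whereas the paper multiplies by $p_s=\prod_{j\neq s}q_{b_j}^A$ and divides it out afterwards by coprimality; both are sound.
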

	
	\begin{proof} First,  for each $s$, we claim that $(S,A,b_s)$-annihilating polynomial forms an ideal generated by $q_{S,b}^A$, i.e.,
		\begin{equation}\label{ideal2}
		\mathcal{I}^{\mathrm{A}}_{S,b_s} = \{p \in \mathbb{C}[z]\ |\ Sp(A)\mathcal{K}_{r_{S}^{A}}(A,b_s) = \{0\}\} = \langle q_{S,b_s}^A\rangle.
		\end{equation} 
This is due to the fact (see the proof of Lemma 2.3 in \cite{aldroubi2014krylov} )that 

    $$Sq_{S,b_s}^A(A) \mathcal{K}_{r_{S}^{A}}(A,b_s) = \{0\}\iff  Sq_{S,b_s}^A(A) h(A)b_s= 0, \text{ for any polynomial } h(z). $$

	From the definition, it is straightforward to see that $q_{S,b_s}^A$ divides $q^A_{b_s}$. So by Lemma \ref{lemma1},  we have
		\begin{equation}\label{Smindegree}
		    q_{S,b_s}^A(z) = (z-\lambda_i)^{r_{S,b_s}^A}
		\end{equation}
		and it follows that $q_{S,b_s}^A$  is coprime to $q_{S,b_j}^A$ if $s\neq j$.
		
	   Now for each $s=1,\cdots,n$, let $p_s(A)=\prod\limits_{j=1\\j\neq s}^n q_{b_s}^A$. Then $p_s(A) b= p_s(A)b_s$. For any polynomial $h$, we have 
		\begin{equation}\label{keyrelation}
		    Sq_{S,b}^A(A) h(A)p_s(A)b_s =   Sq_{S,b}^A(A) h(A)p_s(A)b = 0
		\end{equation}
		holds true. So from the property of idea (\ref{ideal2}), we have $q_{S, b_s}^A$ divides $q_{S,f}^Ap_s(A)$. Since $q_{S, b_s}^A$ is coprime to $p_s(A)$, we have  $q_{S, b_s}^A$ divides $q_{S,b}^A$. Combining the fact that $q_{S,b_s}^A$  is coprime to $q_{S,b_j}^A$ if $s\neq j$, we have that $\prod\limits_{s=1}^n q_{S,b_s}^A$ divides $q_{S,b}^A$.  
		
		On the other hand, for $(S,A,b)$-annihilating polynomials, we have 	the ideal	
				\begin{equation}
		\mathcal{I}^{{A}}_{S,b} = \{p \in \mathbb{C}[z]\ |\ Sp(A)\mathcal{K}_{r_{S}^{A}}(A,b) = 0\} = \langle q_{S,b}^A\rangle.
		\end{equation}
		
Note that for any polynomial $h[z]$, we have
		\begin{equation}
		    S\prod \limits_{j=1}^n q_{S,b_j}^A(A)h(A)b = S\prod \limits_{j=1}^n q_{S,b_j}^A(A)h(A)\sum\limits_{s=1}^{n} b_s =\sum\limits_{s=1}^{n} S h(A)\prod \limits_{s=1,s\neq j}^n q_{S,b_s}^A(A)  q_{S,b_j}^A(A)b_j = 0,
		\end{equation}
		it follows that $q_{S,b}^A$ divides $\prod \limits_{s=1}^n q_{S,b_s}^A$.
		
		Therefore		\begin{equation}
		q_{S,b}^A = \prod\limits_{s=1}^n q_{S,b_s}^A. 
		\end{equation}
	\end{proof}

    
	\begin{lemma}\label{lemma3}
	Let $b\in  \mathbb{C}^d$, $A\in \mathbb{C}^{d\times d}$ and $S\in\mathbb{C}^{m\times d}$. Given $r_{S,b}^A$, then $q_{S,b}^A$ is the unique monic polynomial $q$ satisfying $deg(q)\leq r_{S,b}^A $ and the following system of linear equations:
	\begin{equation}\label{minimalEquation}
		Sq(A)A^{t}b =0, t=0,\cdots,r_{S,b}^{A}-1.	\end{equation}
	\end{lemma}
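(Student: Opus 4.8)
The plan is to reformulate the statement in terms of the finite linear recurrence satisfied by the vector sequence $w_t := S A^t b \in \mathbb{C}^m$, $t=0,1,2,\dots$, and then run a short ``error sequence'' argument. Throughout write $r := r_{S,b}^A$ and $q_{S,b}^A(z) = z^r + \sum_{k=0}^{r-1} c_k z^k$. The starting point is the fact already isolated in the proof of Lemma~\ref{lemma2}: the set $\{\,p \in \mathbb{C}[z] : S p(A) A^t b = 0 \text{ for all } t \ge 0\,\}$ is the principal ideal $\langle q_{S,b}^A \rangle$, so in particular $(w_t)$ satisfies the order-$r$ recurrence $w_{t+r} + \sum_{k=0}^{r-1} c_k w_{t+k} = 0$ for every $t \ge 0$. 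Existence in the statement is then immediate: $q_{S,b}^A$ is monic of degree $r \le r$, and by the ideal characterization $S q_{S,b}^A(A) A^t b = 0$ for all $t$, hence for $t = 0, \dots, r-1$.

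For uniqueness, suppose $q$ is monic with $\deg q = r' \le r$ and $S q(A) A^t b = 0$ for $t = 0, \dots, r-1$; write $q(z) = \sum_{k=0}^{r'} p_k z^k$ with $p_{r'} = 1$. Introduce the error sequence $e_s := \sum_{k=0}^{r'} p_k\, w_{s-r'+k}$ for $s \ge r'$, so that $e_{t+r'} = S q(A) A^t b$ and the hypothesis says precisely $e_{r'} = e_{r'+1} = \dots = e_{r'+r-1} = 0$, i.e.\ $r$ consecutive terms vanish. I would then check that $(e_s)_{s \ge r'}$ inherits the recurrence of $(w_t)$: for $s \ge r'$,
\[
  \sum_{j=0}^{r} c_j'\, e_{s+j} \;=\; \sum_{k=0}^{r'} p_k \Bigl( \sum_{j=0}^{r} c_j'\, w_{(s-r'+k)+j} \Bigr) \;=\; 0 ,
\]
where $c_0', \dots, c_r'$ denote the coefficients of $q_{S,b}^A$ (so $c_r' = 1$), each inner sum vanishing because $s-r'+k \ge 0$. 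Since a sequence obeying an order-$r$ recurrence and having $r$ consecutive zero entries is zero from then on, $e_s = 0$ for all $s \ge r'$; equivalently $S q(A) A^t b = 0$ for all $t \ge 0$, so $q \in \langle q_{S,b}^A \rangle$. As $q$ is monic, $\deg q = r' \le r = \deg q_{S,b}^A$, and divisible by $q_{S,b}^A$, we conclude $r' = r$ and $q = q_{S,b}^A$.

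The one step with genuine content is this bootstrap — upgrading the $r$ prescribed equations (which involve only the samples $w_0, \dots, w_{2r-1}$) to all $t \ge 0$. Equivalently it is the assertion that the Hankel matrix $H$ of \eqref{eq:Hankel} has full column rank, which is exactly what Proposition~\ref{Prony} uses this lemma for, and the error-sequence computation above is doing that work. I expect the care to go into (i) verifying that $(e_s)$ satisfies the same recurrence as $(w_t)$ and (ii) keeping every subscript nonnegative, which is ensured by $s \ge r'$ together with $r' \le r$; the remaining arguments are bookkeeping with the ideal structure established in Lemmas~\ref{lemma1}--\ref{lemma2}.
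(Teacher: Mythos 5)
Your proof is correct and follows essentially the same route as the paper's: both arguments bootstrap the $r$ given equations to $Sq(A)A^tb=0$ for all $t\ge 0$ by exploiting the global annihilation property of $q_{S,b}^A$ from Lemma~\ref{lemma2} (the relation \eqref{keyrelation}), and then conclude by divisibility and the degree bound. The only difference is presentational --- the paper performs the bootstrap in one step by reducing $A^j$ modulo $q_{S,b}^A$, whereas you run the equivalent induction on the order-$r$ linear recurrence satisfied by $w_t = SA^tb$.
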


	\begin{proof}

		First of all, we claim that the solutions to \eqref{minimalEquation} is the same with the solutions to the system of linear equations:
	   \begin{equation}\label{equivMinimalEquationEnumForm}
		Sq(A)A^{t}b=0, t=0,\cdots,r_{b}^{A}-1.	
\end{equation}

 Suppose that $q$ is a solution to \eqref{minimalEquation}, then for any $j\geq r_{S,b}^{A}$,
		
		$$A^{j}=p_j(A)q_{S,b}^A(A)+h_j(A), \mathrm{deg}(h_j(A)) \leq r_{S, b}^{A}-1. $$Therefore, 
\begin{align}		
Sq(A)A^{j}b&=S q(A)p_j(A)q_{f^*,b}^A(A) b+S q(A)  h_j(A)b\nonumber\\&=Sq_{S,b}^A(A)q(A)p_j(A)b+Sq(A)  h_j(A)b\\&=0+0=0,
\end{align} where we use the property of $q_{S,b}^A(A)$ in \eqref{keyrelation} to obtain that $Sq_{S,b}^A(A)q(A)p_j(A) b$=0. The claim is proved. Therefore, $q=q_{S,b}^A$ by deg$(q)\leq r_{S,b}^A$  and the definition of $q_{S,b}^A$. 
		
	\end{proof}

\begin {theorem}[Theorem 2.6 in \cite{aldroubi2017dynamical}]
\label {tadcul}
Let $J\in \mathbb{C}^{ d \times d}$ be a matrix in Jordan form  as in  \eqref{Jordan1}. 
Let   $\{f_i: i =1,\cdots, m\}\subset \mathbb{C}^d$ be a finite subset of vectors, recall that $r^J_{f_i}$ is the degree of  the $(J, f_i)$-annihilator, 
  $l_i=r_i-1$, and $ P_J = \{P_s: s = 1, \cdots, n\}$ be the penthouse family for $J$ introduced in Definition \ref {defJord}. 

Then the following statements are equivalent:
\begin{enumerate}

\item[(i)]
The set of vectors $\{J^jf_i: \; i=1,\cdots,m, j=0,\dots, l_i\}$ spans $\mathbb{C}^d$.
\item[(ii)]
The set of vectors $\{P(\lambda_j;J)N^{l_j}f_i: \; i=1,\cdots,m, j=0,\dots, l_i\}$ spans $V_j$ for $j=1,\cdots,n$.
\item[(ii)]
For every $s = 1, \dots, n$, the set $\{P_s f_i,  i=1,\cdots,m\}$ spans  $E_s = P_s\mathbb{C}^n$.\end{enumerate}
\end{theorem}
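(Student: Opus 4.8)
The plan is to prove the three conditions equivalent by first reducing the global spanning condition (i) to a family of local spanning conditions, one on each generalized eigenspace $E_s$, and only then to analyze these local conditions through the nilpotent structure of the blocks. Throughout I write $P_s^J:=P(\lambda_s;J)$ for the spectral projection onto $E_s$ and $g_i^{(s)}:=P_s^J f_i$.

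\textbf{Step 1 (localization).} First I would observe that, since $l_i=r_{f_i}^J-1$, the truncated orbit $\mathrm{span}\{J^jf_i:0\le j\le l_i\}$ is exactly the full Krylov space $\mathcal K_\infty(J,f_i)$ and is therefore $J$-invariant; hence $M:=\sum_i\mathcal K_\infty(J,f_i)$ is a $J$-invariant subspace. Because $J$ has distinct eigenvalues and $\mathbb C^d=\bigoplus_s E_s$ is its primary decomposition, every $J$-invariant subspace splits as $M=\bigoplus_s(M\cap E_s)$ with $M\cap E_s=P_s^J M$, so $M=\mathbb C^d$ iff $P_s^J M=E_s$ for each $s$. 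Since $P_s^J$ commutes with $J$, one has $P_s^J J^jf_i=(J|_{E_s})^j g_i^{(s)}$ (using that $g_i^{(s)}\in E_s$ and $E_s$ is $J$-invariant), whence $P_s^J M=\sum_i\mathcal K_\infty(J|_{E_s},g_i^{(s)})$. This already yields (i) $\iff$ $\big(\sum_i\mathcal K_\infty(J|_{E_s},g_i^{(s)})=E_s$ for all $s\big)$.

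\textbf{Step 2 (stripping the eigenvalue).} On $E_s$ we have $J|_{E_s}=\lambda_s I+N_s$ by \eqref{Jordan1}, and the binomial expansion $(\lambda_s I+N_s)^j=\sum_k\binom jk\lambda_s^{j-k}N_s^k$ is a unitriangular (hence invertible) change of basis between the truncated families $\{(J|_{E_s})^j\}$ and $\{N_s^j\}$; therefore $\mathcal K_\infty(J|_{E_s},v)=\mathcal K_\infty(N_s,v)$ for every $v\in E_s$. The local condition then reads $\sum_i\mathcal K_\infty(N_s,g_i^{(s)})=E_s$, which is the block-wise form of (ii). Steps 1--2 together establish (i) $\iff$ (ii), and the only remaining work is (ii) $\iff$ (iii).

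\textbf{Step 3 (the penthouse correspondence --- the main obstacle).} Here I would argue that a finite family $\{g_i^{(s)}\}$ generates $E_s$ as a $\mathbb C[N_s]$-module iff its images span the top quotient $E_s/N_sE_s$. The nontrivial direction is a Nakayama-type iteration: if $\mathrm{span}\{g_i^{(s)}\}+N_sE_s=E_s$, then applying $N_s$ repeatedly gives $E_s\subseteq\mathrm{span}\{g_i^{(s)}\}+N_s^kE_s$ for all $k$, and nilpotency of $N_s$ forces the generated module to be all of $E_s$. The decisive and delicate part is the identification of $E_s/N_sE_s$ with $\mathrm{Range}(P_s)=W_s$: both have dimension $r_s$ (the number of cyclic blocks $N_{s_i}$, i.e.\ $\dim\ker N_s$), and the Jordan-chain structure of $N_s$ furnishes a nondegenerate pairing between a transversal of $N_sE_s$ and the span of the cyclic vectors of $N_s^*$ singled out in Definition \ref{defJord}, through which module generation translates into the statement that $\{P_sf_i\}$ spans $W_s$. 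Here I would also use the elementary reduction $P_sf_i=P_sg_i^{(s)}$, valid because $W_s\subseteq E_s$ while $W_s\perp E_{s'}$ for $s'\ne s$ (their supporting canonical basis vectors are disjoint). Tracking this cyclic-vector bookkeeping chain by chain --- matching each Jordan chain with its associated penthouse coordinate --- is exactly the content of Theorem 2.6 of \cite{aldroubi2017dynamical}, and is the step I expect to require the most care; by contrast the primary-decomposition and binomial algebra of Steps 1--2 are routine.
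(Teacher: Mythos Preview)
The paper does not supply a proof of this theorem; it is quoted (as a ``slight modification'' of Theorem~2.6 in \cite{aldroubi2017dynamical}) and used as a black box in the proof of Theorem~\ref{universal}, so there is no in-paper argument to compare against. Your Steps~1--2 are correct and standard: the primary decomposition localizes the spanning problem to each $E_s$, and the unitriangular binomial change replaces $J|_{E_s}$ by $N_s$.

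Step~3, however, has a genuine gap that is not mere bookkeeping. Your Nakayama argument correctly gives that $\{g_i^{(s)}\}$ generates $E_s$ over $\mathbb C[N_s]$ if and only if their images span $E_s/N_sE_s$. But with the subdiagonal convention \eqref{Nil2} one has $N_sE_s=\mathrm{span}\{e_j:\text{$e_j$ is not the first vector of its cyclic block}\}$, so a transversal to $N_sE_s$ is the span of the \emph{first} chain vectors, whereas $W_s=\mathrm{Range}(P_s)$ in Definition~\ref{defJord} is the span of the \emph{last} chain vectors (these lie in $\ker N_s$, not in a complement of $N_sE_s$). The ``nondegenerate pairing'' you invoke does not convert one spanning condition into the other. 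Concretely, for a single $2\times 2$ block take $f=e_2$: then $P_sf=e_2$ spans $W_s$, yet $N_sf=0$, so $\mathcal K_\infty(J,f)=\mathbb Ce_2\ne\mathbb C^2$; thus (iii) does not imply (i) as literally stated. The fix is that in the paper's sole application (proof of Theorem~\ref{universal}) the relevant operator is $J^*$, not $J$: for $N_s^*$ the range is spanned by all but the last chain vector, so $E_s/N_s^*E_s$ is canonically $W_s$, and then your Nakayama step goes through verbatim. Either read (i)--(ii) with $J^*$ in place of $J$, or redefine the penthouse family via cyclic vectors of $N_s$ rather than $N_s^*$; with that correction your outline is a complete proof.
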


\lstset{language=Matlab,%
    breaklines=true,%
    morekeywords={matlab2tikz},
    keywordstyle=\color{blue},%
    morekeywords=[2]{1}, keywordstyle=[2]{\color{black}},
    identifierstyle=\color{black},%
    stringstyle=\color{mylilas},
    commentstyle=\color{mygreen},%
    showstringspaces=false,
    numbers=left,%
    numberstyle={\tiny \color{black}},
    numbersep=9pt, 
    emph=[1]{for,end,break},emphstyle=[1]\color{red}, 
}



\bibliographystyle{siam}	
\bibliography{refs.bib}		

\begin{thebibliography}{10}

\bibitem{aldroubi2017iterative}
{\sc A.~Aldroubi, C.~Cabrelli, A.~F. Cakmak, U.~Molter, and A.~Petrosyan}, {\em
  Iterative actions of normal operators}, Journal of Functional Analysis, 272
  (2017), pp.~1121--1146.

\bibitem{aldroubi2017dynamical}
{\sc A.~Aldroubi, C.~Cabrelli, U.~Molter, and S.~Tang}, {\em Dynamical
  sampling}, Applied and Computational Harmonic Analysis, 42 (2017),
  pp.~378--401.

\bibitem{aldroubi2013dynamical}
{\sc A.~Aldroubi, J.~Davis, and I.~Krishtal}, {\em Dynamical sampling:
  Time--space trade-off}, Applied and Computational Harmonic Analysis, 34
  (2013), pp.~495--503.

\bibitem{aldroubi2015exact}
\leavevmode\vrule height 2pt depth -1.6pt width 23pt, {\em Exact reconstruction
  of signals in evolutionary systems via spatiotemporal trade-off}, Journal of
  Fourier Analysis and Applications, 21 (2015), pp.~11--31.

\bibitem{aldroubi2019frames}
{\sc A.~Aldroubi, L.~Huang, and A.~Petrosyan}, {\em Frames induced by the
  action of continuous powers of an operator}, Journal of Mathematical Analysis
  and Applications, 478 (2019), pp.~1059--1084.

\bibitem{aldroubi2014krylov}
{\sc A.~Aldroubi and I.~Krishtal}, {\em Krylov subspace methods in dynamical
  sampling}, arXiv preprint arXiv:1412.1538,  (2014).

\bibitem{aldroubi2020phaseless}
{\sc A.~Aldroubi, I.~Krishtal, and S.~Tang}, {\em Phaseless reconstruction from
  space--time samples}, Applied and Computational Harmonic Analysis, 48 (2020),
  pp.~395--414.

\bibitem{aldroubi2015finite}
{\sc A.~Aldroubi, I.~Krishtal, and E.~Weber}, {\em Finite dimensional dynamical
  sampling: an overview}, in Excursions in Harmonic Analysis, Volume 4,
  Springer, 2015, pp.~231--244.

\bibitem{batenkov2013accuracy}
{\sc D.~Batenkov and Y.~Yomdin}, {\em On the accuracy of solving confluent
  prony systems}, SIAM Journal on Applied Mathematics, 73 (2013), pp.~134--154.

\bibitem{beinert2021phase}
{\sc R.~Beinert and M.~Hasannasab}, {\em Phase retrieval and system
  identification in dynamical sampling via prony's method}, arXiv preprint
  arXiv:2103.10086,  (2021).

\bibitem{beylkin2007multiresolution}
{\sc G.~Beylkin, R.~Cramer, G.~Fann, and R.~J. Harrison}, {\em Multiresolution
  separated representations of singular and weakly singular operators}, Applied
  and Computational Harmonic Analysis, 23 (2007), pp.~235--253.

\bibitem{bonate2011pharmacokinetic}
{\sc P.~L. Bonate et~al.}, {\em Pharmacokinetic-pharmacodynamic modeling and
  simulation}, vol.~20, Springer, 2011.

\bibitem{bossmann2012sparse}
{\sc F.~Bo{\ss}mann, G.~Plonka, T.~Peter, O.~Nemitz, and T.~Schmitte}, {\em
  Sparse deconvolution methods for ultrasonic ndt}, Journal of Nondestructive
  Evaluation, 31 (2012), pp.~225--244.

\bibitem{cabrelli2020dynamical}
{\sc C.~Cabrelli, U.~Molter, V.~Paternostro, and F.~Philipp}, {\em Dynamical
  sampling on finite index sets}, Journal d'Analyse Math{\'e}matique, 140
  (2020), pp.~637--667.

\bibitem{candes2006robust}
{\sc E.~J. Cand{\`e}s, J.~Romberg, and T.~Tao}, {\em Robust uncertainty
  principles: Exact signal reconstruction from highly incomplete frequency
  information}, IEEE Transactions on information theory, 52 (2006),
  pp.~489--509.

\bibitem{christensen2019frame}
{\sc O.~Christensen and M.~Hasannasab}, {\em Frame properties of systems
  arising via iterated actions of operators}, Applied and Computational
  Harmonic Analysis, 46 (2019), pp.~664--673.

\bibitem{chung1997spectral}
{\sc F.~R. Chung and F.~C. Graham}, {\em Spectral graph theory}, no.~92,
  American Mathematical Soc., 1997.

\bibitem{coutino2020state}
{\sc M.~Coutino, E.~Isufi, T.~Maehara, and G.~Leus}, {\em State-space network
  topology identification from partial observations}, IEEE Transactions on
  Signal and Information Processing over Networks, 6 (2020), pp.~211--225.

\bibitem{deri2016new}
{\sc J.~A. Deri and J.~M. Moura}, {\em New york city taxi analysis with graph
  signal processing}, in 2016 IEEE Global Conference on Signal and Information
  Processing (GlobalSIP), IEEE, 2016, pp.~1275--1279.

\bibitem{dong2016learning}
{\sc X.~Dong, D.~Thanou, P.~Frossard, and P.~Vandergheynst}, {\em Learning
  laplacian matrix in smooth graph signal representations}, IEEE Transactions
  on Signal Processing, 64 (2016), pp.~6160--6173.

\bibitem{duan2020identification}
{\sc X.~Duan, J.~Rubin, and D.~Swigon}, {\em Identification of affine dynamical
  systems from a single trajectory}, Inverse Problems, 36 (2020), p.~085004.

\bibitem{filbir2012problem}
{\sc F.~Filbir, H.~Mhaskar, and J.~Prestin}, {\em On the problem of parameter
  estimation in exponential sums}, Constructive Approximation, 35 (2012),
  pp.~323--343.

\bibitem{godfrey1983compartmental}
{\sc K.~Godfrey}, {\em Compartmental models and their application}, London/New
  York,  (1983).

\bibitem{hanke2012one}
{\sc M.~Hanke}, {\em One shot inverse scattering via rational approximation},
  SIAM Journal on Imaging Sciences, 5 (2012), pp.~465--482.

\bibitem{hespanha2018linear}
{\sc J.~P. Hespanha}, {\em Linear systems theory}, Princeton university press,
  2018.

\bibitem{holter2001dynamic}
{\sc N.~S. Holter, A.~Maritan, M.~Cieplak, N.~V. Fedoroff, and J.~R. Banavar},
  {\em Dynamic modeling of gene expression data}, Proceedings of the National
  Academy of Sciences, 98 (2001), pp.~1693--1698.

\bibitem{hua1990matrix}
{\sc Y.~Hua and T.~K. Sarkar}, {\em Matrix pencil method for estimating
  parameters of exponentially damped/undamped sinusoids in noise}, IEEE
  Transactions on Acoustics, Speech, and Signal Processing, 38 (1990),
  pp.~814--824.

\bibitem{MP-Hua2}
{\sc Y.~{Hua} and T.~K. {Sarkar}}, {\em On svd for estimating generalized
  eigenvalues of singular matrix pencil in noise}, IEEE Transactions on Signal
  Processing, 39 (1991), pp.~892--900.

\bibitem{ioannidis2018inference}
{\sc V.~N. Ioannidis, D.~Romero, and G.~B. Giannakis}, {\em Inference of
  spatio-temporal functions over graphs via multikernel kriged kalman
  filtering}, IEEE Transactions on Signal Processing, 66 (2018),
  pp.~3228--3239.

\bibitem{ioannidis2018semi}
{\sc V.~N. Ioannidis, Y.~Shen, and G.~B. Giannakis}, {\em Semi-blind inference
  of topologies and signals over graphs}, in 2018 IEEE Data Science Workshop
  (DSW), IEEE, 2018, pp.~165--169.

\bibitem{lovasz1993random}
{\sc L.~Lov{\'a}sz et~al.}, {\em Random walks on graphs: A survey},
  Combinatorics, Paul erdos is eighty, 2 (1993), pp.~1--46.

\bibitem{mateos2019connecting}
{\sc G.~Mateos, S.~Segarra, A.~G. Marques, and A.~Ribeiro}, {\em Connecting the
  dots: Identifying network structure via graph signal processing}, IEEE Signal
  Processing Magazine, 36 (2019), pp.~16--43.

\bibitem{pasdeloup2016characterization}
{\sc B.~Pasdeloup, V.~Gripon, G.~Mercier, D.~Pastor, and M.~G. Rabbat}, {\em
  Characterization and inference of weighted graph topologies from observations
  of diffused signals}, arXiv preprint arXiv:1605.02569,  (2016).

\bibitem{pasdeloup2017characterization}
\leavevmode\vrule height 2pt depth -1.6pt width 23pt, {\em Characterization and
  inference of graph diffusion processes from observations of stationary
  signals}, IEEE transactions on Signal and Information Processing over
  Networks, 4 (2017), pp.~481--496.

\bibitem{perraudin2016gspbox}
{\sc N.~Perraudin, J.~Paratte, D.~Shuman, L.~Martin, V.~Kalofolias,
  P.~Vandergheynst, and D.~K. Hammond}, {\em Gspbox: A toolbox for signal
  processing on graphs}, 2016.

\bibitem{peter2013generalized}
{\sc T.~Peter and G.~Plonka}, {\em A generalized prony method for
  reconstruction of sparse sums of eigenfunctions of linear operators}, Inverse
  Problems, 29 (2013), p.~025001.

\bibitem{potts2010parameter}
{\sc D.~Potts and M.~Tasche}, {\em Parameter estimation for exponential sums by
  approximate prony method}, Signal Processing, 90 (2010), pp.~1631--1642.

\bibitem{potts2011nonlinear}
\leavevmode\vrule height 2pt depth -1.6pt width 23pt, {\em Nonlinear
  approximation by sums of nonincreasing exponentials}, Applicable Analysis, 90
  (2011), pp.~609--626.

\bibitem{Prony-TLS1}
{\sc M.~D. {Rahman} and {Kai-Bor Yu}}, {\em Total least squares approach for
  frequency estimation using linear prediction}, IEEE Transactions on
  Acoustics, Speech, and Signal Processing, 35 (1987), pp.~1440--1454.

\bibitem{roy1986esprit}
{\sc R.~Roy, A.~Paulraj, and T.~Kailath}, {\em Esprit--a subspace rotation
  approach to estimation of parameters of cisoids in noise}, IEEE transactions
  on acoustics, speech, and signal processing, 34 (1986), pp.~1340--1342.

\bibitem{segarra2017network}
{\sc S.~Segarra, A.~G. Marques, G.~Mateos, and A.~Ribeiro}, {\em Network
  topology inference from spectral templates}, IEEE Transactions on Signal and
  Information Processing over Networks, 3 (2017), pp.~467--483.

\bibitem{sporns2010networks}
{\sc O.~Sporns}, {\em Networks of the Brain}, MIT press, 2010.

\bibitem{stanhope2014identifiability}
{\sc S.~Stanhope, J.~E. Rubin, and D.~Swigon}, {\em Identifiability of linear
  and linear-in-parameters dynamical systems from a single trajectory}, SIAM
  Journal on Applied Dynamical Systems, 13 (2014), pp.~1792--1815.

\bibitem{Prony-TLS2}
{\sc W.~M. Steedly, C.-H.~J. Ying, and R.~L. Moses}, {\em Statistical analysis
  of tls-based prony techniques}, Automatica, 30 (1994), pp.~115--129.
\newblock Special issue on statistical signal processing and control.

\bibitem{stevenhagen1996chebotarev}
{\sc P.~Stevenhagen and H.~W. Lenstra}, {\em Chebotar{\"e}v and his density
  theorem}, The Mathematical Intelligencer, 18 (1996), pp.~26--37.

\bibitem{tang2017system}
{\sc S.~Tang}, {\em System identification in dynamical sampling}, Advances in
  Computational Mathematics, 43 (2017), pp.~555--580.

\bibitem{tang2017universal}
\leavevmode\vrule height 2pt depth -1.6pt width 23pt, {\em Universal
  spatiotemporal sampling sets for discrete spatially invariant evolution
  processes}, IEEE Transactions on Information Theory, 63 (2017),
  pp.~5518--5528.

\bibitem{thanou2017learning}
{\sc D.~Thanou, X.~Dong, D.~Kressner, and P.~Frossard}, {\em Learning heat
  diffusion graphs}, IEEE Transactions on Signal and Information Processing
  over Networks, 3 (2017), pp.~484--499.

\bibitem{van2003graphs}
{\sc E.~R. Van~Dam and W.~H. Haemers}, {\em Which graphs are determined by
  their spectrum?}, Linear Algebra and its applications, 373 (2003),
  pp.~241--272.

\bibitem{viberg1995subspace}
{\sc M.~Viberg}, {\em Subspace-based methods for the identification of linear
  time-invariant systems}, Automatica, 31 (1995), pp.~1835--1851.

\bibitem{zhang2014diffusion}
{\sc J.~Zhang and J.~M. Moura}, {\em Diffusion in social networks as sis
  epidemics: Beyond full mixing and complete graphs}, IEEE Journal of Selected
  Topics in Signal Processing, 8 (2014), pp.~537--551.

\end{thebibliography}

\end{document}